\documentclass[11pt, oneside]{amsart}
\usepackage{amscd,amsmath,amssymb,amsfonts,mathrsfs, mathtools, float, verbatim,fullpage}

\usepackage[
    backend=biber,
    style=alphabetic, 
    backref=true,
    maxalphanames=99, 
    minalphanames=1,  
    maxbibnames=99,   
    url=false,
    doi=false,      
    eprint=false,   
    isbn=false      
]{biblatex}
\DeclareFieldFormat
  [article,incollection,inproceedings,unpublished]
  {title}{\mkbibemph{#1}}

\DeclareFieldFormat
  [book,thesis,report,misc]
  {title}{\mkbibemph{#1}}

\renewbibmacro*{pageref}{
  \iflistundef{pageref}
    {}
    {\printlist[pageref][-\value{listtotal}]{pageref}}}

\renewbibmacro{in:}{}
\DeclareDelimFormat{nametitledelim}{\addcomma\space}

\allowdisplaybreaks
\usepackage{etoolbox}

\usepackage{setspace}

\makeatletter
\patchcmd{\@setaddresses}{\indent}{\noindent}{}{}
\patchcmd{\@setaddresses}{\indent}{\noindent}{}{}
\patchcmd{\@setaddresses}{\indent}{\noindent}{}{}
\patchcmd{\@setaddresses}{\indent}{\noindent}{}{}
\makeatother

\usepackage[T1]{fontenc}
\usepackage[T1]{eulervm}
\usepackage[cmtip, all]{xy}
\usepackage{tikz-cd}
\usepackage{stackrel}
\usepackage[letterpaper, left=2.4cm, right=2.4cm, top = 2.5cm, bottom = 2.5cm ]{geometry}
\usepackage{tgcursor}
\usepackage{hyperref}
\usepackage{xcolor}
\hypersetup{
    colorlinks,
    linkcolor={blue!80!black},
    citecolor={red!60!black},
    urlcolor={magenta!95!black}
}

\newtheorem{thm}{Theorem}[section]
\newtheorem{prop}[thm]{Proposition}
\newtheorem{lem}[thm]{Lemma}

\newtheorem{cor}[thm]{Corollary}

\newtheorem{assumption}[thm]{Assumption}

\theoremstyle{definition}

\newtheorem{defn}[thm]{Definition}
\theoremstyle{remark}
\newtheorem{remk}[thm]{Remark}
\newtheorem{remks}[thm]{Remarks}

\newtheorem{exm}[thm]{Example}
\newtheorem{exms}[thm]{Examples}
\newtheorem{notat}[thm]{Notation}
\numberwithin{equation}{section}

{\hfill$\square$\end{defn}}
{\hfill$\square$\end{remk}}
{\hfill$\square$\end{remks}}
{\hfill$\square$\end{exm}}
{\hfill$\square$\end{exms}}
{\hfill$\square$\end{notat}}

\newcommand{\thmref}{Theorem~\ref}

\newcommand{\defref}{Definition~\ref}
\newcommand{\lemref}{Lemma~\ref}

\newcommand{\remref}{Remark~\ref}

\newcommand{\secref}{Section~\ref}

\newcommand{\sC}{{\mathcal C}}
\newcommand{\sD}{{\mathcal D}}

\newcommand{\sF}{{\mathcal F}}
\newcommand{\sG}{{\mathcal G}}

\newcommand{\sN}{{\mathcal N}}
\newcommand{\sO}{{\mathcal O}}

\newcommand{\sR}{{\mathcal R}}
\newcommand{\sS}{{\mathcal S}}
\newcommand{\sT}{{\mathcal T}}

\newcommand{\sX}{{\mathcal X}}

\newcommand{\A}{{\mathbb A}}
\newcommand{\B}{{\mathbb B}}

\newcommand{\D}{{\mathbb D}}
\newcommand{\E}{{\mathbb E}}

\newcommand{\G}{{\mathbb G}}
\renewcommand{\L}{{\mathbb L}}

\newcommand{\inj}{\hookrightarrow}

\newcommand{\rank}{{\rm rank}}
\newcommand{\Pic}{{\rm Pic}}

\newcommand{\Hom}{{\rm Hom}}

\newcommand{\Spec}{{\rm Spec \,}}

\newcommand{\sHom}{{\mathcal{H}{om}}}

\newcommand{\holim}{\mathop{{\rm holim}}}

\newcommand{\hocolim}{\mathop{{\rm hocolim}}}

\newcommand{\Tot}{{\operatorname{\rm Tot}}}

\newcommand{\Sym}{{\operatorname{\rm Sym}}}

\newcommand{\DM}{{\operatorname{\mathcal{DM}}}}

\newcommand{\End}{{\operatorname{\text{End}}}}

\newcommand{\ds}{{/\kern-3pt/}}

\newcommand{\Qcoh}{Qcoh}

\newcommand{\Map}{\mathcal{M}ap}
\newcommand{\IndCoh}{IndCoh}

\newcommand{\Fun}{{\rm Fun}}

\newcommand{\Ind}{{\operatorname{Ind}}}

\newcommand{\colim}{\mathop{\text{\rm colim}}}

\newcommand{\hocofib}{{\operatorname{\rm hocofib}}}
\newcommand{\hofib}{{\operatorname{\rm hofib}}}

\newcommand{\Mod}{{\mathbf{Mod}}}

\newcommand{\ov}{\overline}

\renewcommand{\dim}{\text{\rm dim}}

\newcommand{\tuborg}{\left\{\begin{array}{ll}}
\newcommand{\sluttuborg}{\end{array}\right.}

\newcommand{\pr}{{\rm pr}}

\newcommand{\wt}{\widetilde}
\newcommand{\wh}{\widehat}

\def\cO{\mathcal{O}}

\def\ol#1{\overline{#1}}

\newcounter{elno}

\newcounter{elno-abc}   

\newcounter{elno-abc-prime}   

\usepackage{epigraph}

\begin{document}
\sloppy
\title[HKR coherent matrix factorization category]
{A Hochschild-Kostant-Rosenberg type theorem for coherent matrix factorizations}
\author{Arnab Kundu$^{1,2}$}  
\author{Bhamidi Sreedhar$^{1,2}$}
 \address{$^{1}$ Harish-Chandra Research Institute, Chhatnag Road, Jhunsi, Prayagraj 211 019, India}
 \address{$^{2}$ Homi Bhabha National Institute, Training School Complex, Anushakti Nagar, Mumbai 400 094, India}
\email{arnabkundu@hri.res.in}
\email{bhamidisreedhar@hri.res.in}

\begin{abstract}
In this article, we prove a Hochschild-Kostant-Rosenberg type theorem for the category of coherent matrix factorizations associated to a Landau-Ginzburg model $(X,f)$, where $X$ is a quasi-smooth derived Deligne-Mumford stack over an algebraically closed field of characteristic 0. 
\end{abstract} 
\setcounter{tocdepth}{1}
\maketitle

\tableofcontents  
{\hypersetup{linkcolor=black} \tableofcontents}

\setlength{\parskip}{12pt} 
\setlength{\abovedisplayskip}{3pt}
\setlength{\belowdisplayskip}{3pt}
\setlength{\abovedisplayshortskip}{0pt}
\setlength{\belowdisplayshortskip}{0pt}

\section{Introduction}
Let $k$ be a field of characteristic $0$. For a smooth affine scheme $X$, the Hochschild-Kostant-Rosenberg (HKR) theorem provides a canonical
isomorphism between the Hochschild homology of the scheme $HH_*(X)$ and the module of differential forms $\Omega^i_X$. That is, one has the following isomorphism, 
\cite{hkr-original},
\[
\Omega^i_X \xrightarrow{\simeq}  \mathrm{HH}_i(X).
\]

The HKR theorem has been generalized in several different directions over the past few decades. Swan proved the HKR theorem for smooth schemes \cite{swan1996hochschild}, which was further generalized by Yekutieli in \cite{yekutieli}. From the perspective of derived algebraic geometry, the work of Ben-Zvi and Nadler \cite{Ben_Zvi_2012} reinterpreted the HKR isomorphism as an equivalence between the derived loop space $\mathcal{L}X:= X \times_{X \times X} X$ and the total space of the shifted tangent complex $T[-1]X$. In recent work \cite{HKR2026},
Fu, Porta, Scherotzke, and Sibilla generalize the work of \cite{Ben_Zvi_2012} and \cite{ArinkinCaldararuHablicsek2019} to non-smooth derived Deligne-Mumford stacks, thereby proving a very general HKR-type theorem. 

A Landau-Ginzburg (LG) model is a pair $(X,f)$ where $X$ is a (derived) Deligne-Mumford stack and $f: X \to \A^1_k$ is a morphism. The study of LG models has been 
motivated by ideas originating from the mathematical theory of mirror symmetry and, in 
recent years, has had several applications in curve counting theories (see 
\cite{FanJarvisRuan2013},\cite{PolishchukVaintrob2016}, \cite{GLSMFundamental2022} 
and \cite{FaveroKim2020}). The category of matrix factorizations is the natural 
category associated with a LG model, playing a role analogous to the derived 
category of (quasi-)coherent sheaves associated with a scheme or stack. One can define the 
Hochschild homology of this category, and it was a natural question to study a HKR-type theorem in this context. In \cite[Section 2]{PV-DUKE}, Polishchuk and Vaintrob 
proved HKR type isomorphisms for LG models of the form  $([X/G],f)$ where $X$ is a smooth 
affine scheme with an action of a finite group $G$. In the same direction, there are 
works of C\u{a}ld\u{a}raru and Tu in \cite[Section 4]{caldararu-tu} and Segal in 
\cite{segal-closed-state} for curved $A_{\infty}$-algebras. There are various 
approaches to proving HKR-type isomorphisms for categories of matrix factorizations. 
In \cite{chain-level-hkr-type-map-chern--kim}, Chung, Kim, and Kim proved a chain-level 
HKR-type isomorphism of mixed complexes between the Hochschild homology complex and 
twisted Hodge complex for an LG model $(X,f)$ where $X$ is assumed to be a  
smooth scheme $X$ or a smooth scheme with finite group action. Halpern-Leistner and Pomerleano \cite{HLP_hodge}, as well as Preygel \cite{Preygel:2011}, have also proved HKR-type theorems for LG models associated to smooth Deligne–Mumford stacks. Using ideas from \cite{HLP_hodge}, a version of this theorem has also been obtained by Choa, Kim, and the second author in \cite{CKS}. 

In this paper, our goal is to generalize the HKR-type theorem to LG models of the form $(X,f)$ where $X$ is not necessarily a smooth  Deligne-Mumford stack. In the smooth setting, a celebrated result of Orlov (\cite[Proposition 3]{Orlov-mf}), proves that for a regular scheme with a flat potential, the category of matrix factorizations is equivalent to the singularity category of the zero locus. In \cite{MR3877165}, Blanc, Robalo, T\"oen, and Vezzosi;\cite{pippi_HS} Pippi; and in \cite{coherent-analogues}, Efimov and Positselski defined a relative singularity category/ category of coherent factorizations associated to an LG model $(X,f)$ which satisfies an Orlov kind of equivalence for non-smooth schemes. For the category of coherent factorizations associated to a quasi-smooth derived Deligne-Mumford stack, we prove a version of the HKR theorem using techniques developed by Preygel in \cite{Preygel:2011} and the recent work of Fu, Porta, Scherotzke, and Sibilla \cite{HKR2026}.

\subsection{Statement of main theorems}
We now state the main theorems of this text. For a detailed explanation of the notations, please see \secref{sec:HKR}.
\begin{thm}[\thmref{thm: HKR homo}]\label{thm:homo-intro}
	Let $X$ be a quasi-smooth, finite type, 
 separated, derived $\DM$ stack over a field $k$ of char 0 with affine stabilizers.
Let $(X,f)$ be a LG model satisfying Assumption \ref{assumption}. 
Then we have the following equivalence of $k((\beta))$-modules:
\begin{equation}\label{hkr isom mf-intro}
    HH_{\bullet}(MF^{\mathrm{coh}}(X,f)) \simeq R\Gamma(I^{DM}X, Sym(\mathbb{L}_{I^{DM}X}[1]))^{B{\mathbb{G}_a}}\otimes_{k[[\beta]]} k((\beta)).
\end{equation}
where $I^{DM}X$ is the orbifold inertia of $X$ and $\L_X$ is the cotangent complex of $X$ over $\Spec k$.
\end{thm}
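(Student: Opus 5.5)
Following Preygel, the plan is to reduce Hochschild homology of coherent matrix factorizations to the $S^1$-equivariant geometry of the derived loop space of $X$, and then apply the HKR theorem of \cite{HKR2026} for derived DM stacks.

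\textbf{Step 1: $MF^{\mathrm{coh}}$ as a base change.} Regard $X$ as an LG model over $\A^1$ via $f$. Write $X_0 = X\times_{\A^1}\{0\}$. Then $\mathrm{Coh}(X_0)$ carries an action of $HH^\bullet(\{0\}/\A^1)\simeq k[[\beta]]$, with $\beta$ in cohomological degree $2$. Following Preygel, $MF^{\mathrm{coh}}(X,f)$ is identified with $\mathrm{Coh}(X_0)\otimes_{k[[\beta]]}k((\beta))$. I use Assumption \ref{assumption} here, to know that $f$ has the needed properties; presumably it says the critical locus lies in $X_0$, or that $f$ is flat.

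\textbf{Step 2: Hochschild homology via loop spaces.} Hochschild homology commutes with this $\beta$-inversion, since the base change is a localization of $k[[\beta]]$-linear categories. So it suffices to compute $HH_\bullet(\mathrm{Coh}(X_0)/k[[\beta]])$. Preygel's computation identifies this with a form of $\mathcal{O}$ of the derived loop space $\mathcal{L}X$, twisted by $f$. Concretely, $\beta$ acts through the $B\G_a$-action coming from $S^1$-rotation combined with the differential $df$. For quasi-smooth $X$, the Hochschild homology of $\mathrm{Coh}$ agrees with that of $\mathrm{Perf}$ after this inversion. This step needs care: it is where quasi-smoothness enters, and it is where separatedness and affine stabilizers are used for dualizability and compact generation. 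The outcome should be
\[
HH_\bullet(\mathrm{Coh}(X_0)/k[[\beta]])\simeq R\Gamma(\mathcal{L}X,\mathcal{O})^{B\G_a}
\]
for a $B\G_a$-structure depending on $f$, before inverting $\beta$.

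\textbf{Step 3: HKR for DM stacks.} Apply \cite{HKR2026}. For a derived DM stack, the formal completion of $\mathcal{L}X$ along the constant loops is replaced by the completion along the inertia $I^{DM}X$. There is a $B\G_a$-equivariant (filtered) equivalence between this completion and the shifted tangent bundle $T_{I^{DM}X}[-1]$. On global functions this gives $\mathrm{Sym}(\L_{I^{DM}X}[1])$. I must check that the twist by $f$ corresponds, under HKR, to the twisted de Rham differential $\pm\, df\wedge$ on the $I^{DM}X$ side. I also need that only the completion matters after inverting $\beta$. This should follow because the $B\G_a$-fixed points localize onto the fixed locus after Tate-type inversion; this is a concentration argument, presumably the role of the formal completion in Preygel.

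\textbf{Main obstacle.} The hard part is comparing the completed $\mathcal{L}X$ with all of $\mathcal{L}X$ compatibly with $B\G_a$ and $\beta$-localization. The issue is that the HKR of \cite{HKR2026} is an equivalence only on formal completions, and non-smoothness means $\mathrm{Sym}(\L[1])$ is unbounded. I would first try to filter $R\Gamma$ by the HKR filtration and show it converges after $\otimes_{k[[\beta]]}k((\beta))$. Completeness should be controlled by the amplitude bound on $\L_X$ in $[-1,0]$ from quasi-smoothness, together with finite cohomological dimension of the DM stack. These two inputs should reduce the comparison to that of associated graded pieces, where HKR applies directly.
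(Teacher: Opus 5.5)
Your endpoints are right (reduce to $\mathcal{L}X$, then apply \cite{HKR2026}), but Step 2 is a black box, and it carries all the content. The mechanisms you suggest for it are also not the right ones. Preygel's computation is for smooth $X$ and flat $f$; extending it to $MF^{coh}$ is what Sections~\ref{sec:thom-sebastiani}--\ref{sec:duality} of the paper are for. The missing chain is as follows.
\begin{itemize}
\item $HH_\bullet=ev(id)$.
\item The duality $MF^{coh,\infty}(X,f)^\vee\simeq MF^{coh,\infty}(X,-f)$ (\thmref{thm:duality}) and Thom--Sebastiani (\thmref{thm:thom-sebastiani}) turn functors into kernels in $MF^{coh,\infty}_{X_0\times X_0}(X^2,f\boxplus -f)$.
\item Assumption~\ref{assumption} serves exactly to drop this support condition (\lemref{lem:supp=not supp}); it is not a flatness hypothesis.
\item $id$ and $ev$ are then represented by $\ov\Delta_*\omega_X$ and $RHom(\ov\Delta_*\cO_X,-)$ (\thmref{thm:repn of ev and id}).
\item \lemref{lem:hocolim} rewrites the $k[[\beta]]$-linear Hom on $(X^2)_0$ as $RHom_{\Qcoh(X^2)}(\Delta_*\cO_X,\Delta_*\omega_X)^{S^1}$.
\item $\Delta^*\Delta_*\simeq p_*p^*$ together with the $(p_*,p^!)$ adjunction gives $R\Gamma(\mathcal{L}X,p^!\omega_X)^{S^1}\otimes_{k[[\beta]]}k((\beta))$.
\end{itemize}

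Three points in your sketch conflict with this chain.
\begin{itemize}
\item The support condition is removable only after inverting $\beta$. So nothing like your pre-inversion formula for $HH_\bullet(\mathrm{Coh}(X_0)/k[[\beta]])$ is established.
\item The coefficient is $p^!\omega_X$, not $\cO$. The passage to $\cO_{T[-1]I^{DM}X}$ is the determinant computation of \lemref{clm:q^!}. Your substitute, ``HH of Coh agrees with HH of Perf after inverting $\beta$'', cannot do this job. Inverting $\beta$ kills the $\beta$-torsion objects, which include $i^*Perf(X)$ (\lemref{lem:beta torsion}), hence all of $Perf(X_0)$ when $X$ is affine.
\item The circle whose fixed points produce $\beta$ is the one induced by the potential: the $\B$-action on $X_0$, i.e.\ the $S^1$ in \lemref{lem:hocolim}. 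Under HKR it becomes $df\wedge-$ (\thmref{lem:dr twist}). Loop rotation becomes $d_{dR}$ (\thmref{lem:de Rham action}) and must not be mixed in. Otherwise you compute a twisted de Rham-type complex rather than the twisted Hodge complex of Corollary~\ref{cor:homo}.
\end{itemize}

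Your ``main obstacle'' does not exist. For derived DM stacks, the result of \cite{HKR2026} used here (\thmref{HKR-fu+}) is a global, $S^1$-equivariant equivalence $T[-1]I^{DM}X\simeq\mathcal{L}X$ over $X$. It is not merely an equivalence of formal completions. The completion caveat belongs to stacks with positive-dimensional stabilizers; for DM stacks, the non-constant loops are exactly the twisted sectors of $I^{DM}X$. So you need no HKR-filtration convergence argument and no amplitude estimate on $\L_X$ for that purpose. You also need no concentration/localization step, which would concern loop rotation rather than the potential's $\beta$ in any case. The proof finishes directly:
\begin{itemize}
\item transport $R\Gamma(\mathcal{L}X,p^!\omega_X)^{S^1}$ along \thmref{HKR-fu+};
\item use $q^!\omega_X\simeq\cO_{T[-1]I^{DM}X}$;
\item write $\cO_{T[-1]I^{DM}X}=\Sym(\L_{I^{DM}X}[1])$.
\end{itemize}
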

As a corollary, we can simplify the right-hand side of \eqref{hkr isom mf-intro} and obtain the following. 
\begin{cor} (Corollary \ref{cor:homo}) \label{cor:homo-intro}
    Let $(X,f)$ be a LG model. Then we have the following equivalence of 2-periodic complexes,
    \begin{equation}\label{f2}
        HH_{\bullet}(MF^{\mathrm{coh}}(X,f)) \simeq R\Gamma \left(\Tot\left( \cdots \to \bigoplus_{i~ {\rm even}}\bigwedge^i\L_{I^{DM}X} \xrightarrow{(-)\wedge df\mid_{I^{DM}X}} \bigoplus_{i~ {\rm odd}}\bigwedge^i\L_{I^{DM}X} \cdots \right)\right).
    \end{equation}
\end{cor}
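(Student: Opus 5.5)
The plan is to deduce the corollary from \thmref{thm:homo-intro} by computing the $B\mathbb{G}_a$-invariants of $R\Gamma(I^{DM}X, Sym(\L_{I^{DM}X}[1]))$ explicitly, and then inverting $\beta$.

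\textbf{Step 1: identify the $B\mathbb{G}_a$-action.} By the HKR equivalence of \cite{HKR2026}, $Sym(\L_{I^{DM}X}[1]) \simeq \bigoplus_i \bigwedge^i \L_{I^{DM}X}[i]$. The $B\mathbb{G}_a$-action coming from the LG model (as in Preygel's setup \cite{Preygel:2011}) is encoded by a square-zero operator of cohomological degree $+1$. In the $B\mathbb{G}_a$-equivariant description of loop spaces, $\mathbb{G}_a$-equivariance for the potential $f$ twists the loop-rotation data. On the formal (odd tangent) side, I expect this operator to be contraction/wedge with $df|_{I^{DM}X}$: for each $i$ it is a map $\bigwedge^i\L \to \bigwedge^{i+1}\L$, which after the shifts has degree $+1$.

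I would check this operator first affine-locally. There $X$ is a derived complete intersection and the computation reduces to the Koszul-type model of the loop space. I would then use étale descent on the DM stack, together with compatibility of orbifold inertia with restriction of $f$, so that $f\circ\pi$ on $I^{DM}X$ makes sense.

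\textbf{Step 2: compute invariants.} For an object $M$ with $B\mathbb{G}_a$-action, i.e.\ a $k[\epsilon]$-module with $|\epsilon|=1$, the invariants are $M^{hB\mathbb{G}_a}\simeq \Tot(M[[\beta]], d+\beta\epsilon)$ with $|\beta|=2$. Apply this with $M=\bigoplus_i\bigwedge^i\L[i]$ and $\epsilon=(-)\wedge df$. The functor $R\Gamma$ commutes with the product over powers of $\beta$ because everything is bounded on the finite-type stack.

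\textbf{Step 3: invert $\beta$ and regrade.} After $\otimes_{k[[\beta]]}k((\beta))$, rescaling $\bigwedge^i\L[i]$ by $\beta^{\lfloor i/2\rfloor}$ identifies the result with the 2-periodic folding: the even part $\bigoplus_{i\ \mathrm{even}}\bigwedge^i\L$ and the odd part $\bigoplus_{i\ \mathrm{odd}}\bigwedge^i\L$, with differential the internal differential of $\L$ plus $\wedge df$. That is exactly the totalization in \eqref{f2}. Step 3 also requires finiteness: $\bigwedge^i\L_{I^{DM}X}$ is nonzero in unboundedly many degrees since the stack is only quasi-smooth. So one must check that, after inverting $\beta$, product- and sum-totalizations agree. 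I would argue this via boundedness of each $\bigwedge^i\L$ in degrees $[-i,0]$ and by choosing the totalization convention (direct sum versus product) so that it matches the $k((\beta))$-completion.

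The main obstacle is Step 1: rigorously identifying the $B\mathbb{G}_a$-action on the HKR side with $\wedge df$, compatibly with the HKR filtration of \cite{HKR2026}, rather than merely on the associated graded. I would first try formality, using that in characteristic 0 the HKR equivalence is $B\mathbb{G}_a$-equivariant up to the standard de Rham/Connes filtration. Any higher correction terms would be handled by a filtration argument that becomes trivial after inverting $\beta$.
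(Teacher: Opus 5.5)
Your outline follows the paper's route: \thmref{thm:homo-intro}, then identify the potential's $B\G_a$-action with $(-)\wedge df|_{I^{DM}X}$, then take fixed points and invert $\beta$. The gap is Step~1. You leave it as an expectation, and the plan you sketch would not establish it.

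\textbf{Step 1.} Checking on affine \'etale charts $U\to X$ via the Koszul model of $\mathcal{L}U$ and then descending only sees the untwisted sector. Indeed, $\mathcal{L}U$ is just the trivial-monodromy open and closed piece of $\mathcal{L}X\times_X U$, so the twisted sectors of $I^{DM}X$ are never reached. To go this way you would need charts $[U/G]$ and a separate computation on each derived fixed locus $U^g$. The appeal to formality, and to corrections that ``become trivial after inverting $\beta$'', has no justification. It also conflates the two circle actions of \remref{two actions}: the de Rham/Connes filtration concerns loop rotation, which plays no role in this fixed-point computation.

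In the paper this step is \thmref{lem:dr twist}, deduced from multiplicativity of the HKR equivalence. No filtration argument is needed because the potential acts on $\cO_{\mathcal{L}X}$ by multiplication by a degree $-1$ function pulled back along $\mathcal{L}X\to\mathcal{L}\A^1$. Now \thmref{HKR-fu+} is an equivalence of derived stacks over $X$ and is functorial in $X$. Apply it to $f\colon X\to\A^1$, where $I^{DM}\A^1=\A^1$ and that degree $-1$ coordinate corresponds to $dx$. It carries this multiplication operator exactly to $(-)\wedge df|_{I^{DM}X}$. You should cite \thmref{lem:dr twist} in place of your Step~1.

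\textbf{Steps 2--3.} With that input, these are the paper's proof, and here you are more careful than the paper, which writes $\mathfrak{D}\otimes_k k[\beta]$ and $\bigoplus$. Since $X$ is only quasi-smooth, $\bigwedge^i\L_{I^{DM}X}$ is nonzero for infinitely many $i$; for instance a summand $\cO[1]$ of $\L$ contributes $\bigwedge^i(\cO[1])\simeq\cO[i]$ for every $i$. So $M=\Sym(\L_{I^{DM}X}[1])$ is unbounded below, and the fixed points really are $M[[\beta]]$. Because $M$ is bounded above, $M((\beta))$ in each degree is $\prod_q M^q$ over one parity of $q$. Product and direct-sum totalizations therefore genuinely differ. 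Your first option (checking that they agree) fails, and \eqref{f2} must be read with products, which is your second option.

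Two small corrections:
\begin{itemize}
\item Multiplication by $df$ has cohomological degree $-1$ on $\Sym(\L[1])$ (\remref{BG-a with endo}), not $+1$. That is what makes $d+\beta\epsilon$ homogeneous when $|\beta|=2$.
\item $R\Gamma$ commutes with the product over powers of $\beta$ simply because that product is a limit, not because of boundedness. The step that actually needs an argument is moving $-\otimes_{k[[\beta]]}k((\beta))$ inside $R\Gamma$. This uses compactness of the structure sheaf.
\end{itemize}
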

For Hochschild cohomology, we obtain a version of \thmref{thm:homo-intro} which is as follows:
\begin{thm}[\thmref{thm:Hochs cohomo main}]\label{thm:cohomo-intro}
	Let $X$ be a quasi-smooth, finite type, 
 separated, derived $\DM$ stack over a field $k$ of characteristics 0 with  affine stabilizers.
Let $(X,f)$ be an LG model satisfying Assumption \ref{assumption}. 
Then we have the following equivalence of $k((\beta))$-modules:
\begin{equation*}
	HH^{\bullet}(MF^{\mathrm{coh}}(X,f)) \simeq R\Gamma(I^{DM}X, \Sym(\mathbb{L}_{I^{DM}X}[1]) \otimes \pi^*\omega_X^{\vee}[-\dim X])^{B{\mathbb{G}_a}}\otimes_{k[[\beta]]} k((\beta)).
\end{equation*}
where $\pi : I^{DM}X \to X$ is the natural projection and $\omega_X$ is the dualizing complex of $X$.
\end{thm}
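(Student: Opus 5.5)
Following Preygel \cite{Preygel:2011}, I will realize $MF^{\mathrm{coh}}(X,f)$ as the $2$-periodization of $\mathrm{Coh}(X_0)$, where $X_0 = X\times_{\A^1}\{0\}$ is the derived zero fiber, viewed as a $k[[\beta]]$-linear category with $\deg\beta = 2$. More precisely,
\[
MF^{\mathrm{coh}}(X,f)\simeq \mathrm{Coh}(X_0)\otimes_{k[[\beta]]}k((\beta)),
\]
where the $k[[\beta]]$-action comes from the $B\mathbb{G}_a$ (equivalently $\Omega_0\A^1$) action on $X_0$. Assumption~\ref{assumption} is used to ensure that this is the right model and that the relevant categories are smooth and proper enough over $k[[\beta]]$ for the formal manipulations below.

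Hochschild cohomology is then computed relative to $k((\beta))$. I will use two standard facts:
\[
HH^\bullet\bigl(\mathcal C\otimes_{k[[\beta]]}k((\beta))/k((\beta))\bigr)\simeq HH^\bullet\bigl(\mathcal C/k[[\beta]]\bigr)\otimes_{k[[\beta]]}k((\beta)),
\]
and the identification of $HH^\bullet(\mathrm{Coh}(X_0)/k[[\beta]])$ with the $B\mathbb{G}_a$-invariants (in Preygel's sense) of $HH^\bullet(\mathrm{Coh}(X))$, twisted by the curvature. The first fact is base change along the localization $k[[\beta]]\to k((\beta))$. It requires $\mathrm{Coh}(X_0)$ to be compactly generated over $k[[\beta]]$, which follows from the quasi-smooth hypotheses. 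The upshot is a reduction to computing $HH^\bullet(\mathrm{IndCoh}(X))$ equivariantly for the $B\mathbb{G}_a$-action induced by $df$.

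For this step I will use the geometric description
\[
HH^\bullet(\mathrm{IndCoh}(X))\simeq \mathrm{End}_{\mathrm{IndCoh}(X\times X)}(\Delta_*\omega_X).
\]
Since $X$ is quasi-smooth, the diagonal $\Delta$ is quasi-smooth, and Grothendieck duality for $\Delta$ identifies this with $R\Gamma(\mathcal{L}X,\omega_{\mathcal{L}X})$ twisted by $\Delta^!$. Concretely, one has
\[
\mathrm{End}(\Delta_*\omega_X)\simeq\Gamma\bigl(X,\Delta^!\Delta_*\omega_X\otimes\omega_X^{\vee}\bigr)\simeq R\Gamma\bigl(\mathcal{L}X,\ \omega_{\mathcal{L}X}\otimes p^*\omega_X^{\vee}\bigr).
\]
Next I apply the HKR theorem of \cite{HKR2026}. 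It gives a $B\mathbb{G}_a$-equivariant identification of the formal completion of $\mathcal{L}X$ with the formal completion of the shifted tangent bundle $T[-1]I^{DM}X$. Because $X$ is DM with affine stabilizers, $\mathcal{L}X$ is supported on the orbifold inertia, so the completion loses nothing.

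On $T[-1]I^{DM}X$ the pushforward of the structure sheaf is $\Sym(\mathbb{L}_{I^{DM}X}[1])$. The dualizing sheaf of $T[-1]I^{DM}X$ relative to $I^{DM}X$ contributes the relative dimension shift, and combined with $\omega_{I^{DM}X}$ and $\pi^*\omega_X^{\vee}$ this produces the twist
\[
\pi^*\omega_X^{\vee}[-\dim X].
\]
Here the shift uses that $\omega$ of a quasi-smooth stack is a shifted line bundle, invertible relative to $\det\mathbb{L}$. Finally, the curvature $f$ makes $B\mathbb{G}_a$ act through the de Rham differential twisted by $df$. Taking invariants and tensoring with $k((\beta))$ yields the claimed formula.

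The main obstacle is the twisted comparison at the end of the previous step. One must check that the HKR equivalence of \cite{HKR2026} intertwines the $B\mathbb{G}_a$-action coming from $f$ on the categorical side, i.e.\ the circle action twisted by $df$, with the action on $\Sym(\mathbb{L}[1])$. This compatibility must hold at the level of dualizing sheaves, not merely structure sheaves. My approach is to first verify it for homology, as in Theorem~\ref{thm:homo-intro}, and then deduce the cohomology statement by the duality
\[
HH^\bullet\simeq \mathrm{Hom}(HH_\bullet\text{-kernel},\ldots),
\]
i.e.\ by identifying $\Delta^!$ with $\Delta^*\otimes\omega_\Delta$ equivariantly. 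The key point is that $\omega_X$ is $B\mathbb{G}_a$-invariant, so twisting by $\pi^*\omega_X^{\vee}[-\dim X]$ commutes with taking invariants.
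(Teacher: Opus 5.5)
Your outline has the right overall shape (identity kernel, $S^1$-invariants, base change to $\mathcal{L}X$, HKR, dualizing twist). Your computation $\mathrm{End}(\Delta_*\omega_X)\simeq R\Gamma(\mathcal{L}X,\omega_{\mathcal{L}X}\otimes p^*\omega_X^{\vee})$, via adjunction, base change and the projection formula, is equivalent to the paper's $R\Gamma(\mathcal{L}X,p^!\mathcal{O}_X)$. The gap is the step you call standard: that $HH^{\bullet}$ of $MF^{\mathrm{coh}}(X,f)$ over $k((\beta))$ equals the $S^1$-invariants of the endomorphisms of a diagonal kernel on $X\times X$, tensored with $k((\beta))$. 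In the coherent, quasi-smooth setting this is exactly what must be proved, and your justification does not prove it. Base change of $HH^{\bullet}$ along $k[[\beta]]\to k((\beta))$ does not follow from compact generation: $\mathrm{End}(\mathrm{id})$ commutes with inverting $\beta$ when the identity bimodule is compact (smoothness over $k[[\beta]]$), and you never establish that. The paper instead works over $k((\beta))$ from the start:
\begin{itemize}
\item \thmref{thm:thom-sebastiani} and \thmref{thm:duality} identify $\Fun^L_{k((\beta))}(MF^{coh,\infty}(X,f),MF^{coh,\infty}(X,f))$ with $MF^{coh,\infty}_{X_0\times X_0}(X^2,-f\boxplus f)$.
\item Assumption~\ref{assumption} enters exactly in \lemref{lem:supp=not supp}, to drop the support condition, not to make anything ``smooth and proper over $k[[\beta]]$''.
\item \thmref{thm:repn of ev and id} identifies the identity with $\overline{\Delta}_*R\Gamma_{X_0}(\omega_X)$.
\item Grothendieck duality reduces to endomorphisms of the compact object $\overline{\Delta}_*\mathcal{O}_X$, for which Hom does commute with $-\otimes_{k[[\beta]]}k((\beta))$.
\item \lemref{lem:hocolim} converts the $k[[\beta]]$-linear Hom on $(X^2)_0$ into $RHom_{\Qcoh(X^2)}(\Delta_*\mathcal{O}_X,\Delta_*\mathcal{O}_X)^{S^1}$, with $S^1$ acting through the null-homotopy of $f\boxplus -f$.
\end{itemize}

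Two points in your geometric step are also wrong as stated.

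First, the action. The $B\mathbb{G}_a$ whose invariants appear is the one coming from the $k[[\beta]]$-structure, i.e.\ from the potential. Under HKR it acts by $df|_{I^{DM}X}\wedge-$ alone (\thmref{lem:dr twist}); loop rotation, which gives $d_{dR}$ (\thmref{lem:de Rham action}), plays no role. Your ``de Rham differential twisted by $df$'' and ``circle action twisted by $df$'' read as $d_{dR}+df\wedge-$, whose invariants give a twisted de Rham (periodic cyclic type) invariant. This is already wrong for $f=0$: there $MF^{\mathrm{coh}}(X,0)$ is the $2$-periodization of $D^bCoh(X)$ and the action must be trivial.

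Second, $\Delta$ is not quasi-smooth. $\mathbb{L}_{\Delta}\simeq\mathbb{L}_X[1]$ has Tor-amplitude in $[-1,0]$ only if $X$ is smooth, so your plan to identify $\Delta^!$ with $\Delta^*\otimes\omega_{\Delta}$ equivariantly is unavailable. It is also unnecessary. Base change gives $\Delta^!\Delta_*\simeq p_*p^!$, and the twist is settled on $T[-1]I^{DM}X$ by the determinant computation of Lemma~\ref{clm:q^!}, which shows $q^!\omega_X\simeq\mathcal{O}_{T[-1]I^{DM}X}$. There the relative dualizing complex of $T[-1]I^{DM}X\to I^{DM}X$ cancels $\omega_{I^{DM}X}$ sector by sector, so $q^!\mathcal{O}_X\simeq q^*\omega_X^{\vee}$. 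The relative dimension of $T[-1]I^{DM}X\to I^{DM}X$, which varies over the twisted sectors, is therefore not where the shift comes from.
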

As an important step in proving \thmref{thm:homo-intro} and \thmref{thm:cohomo-intro}, we need to prove a Thom-Sebastiani type isomorphism and a duality isomorphism for the category of coherent matrix factorizations, denoted by $MF^{coh}(X,f)$. In \cite{Preygel:2011}, Preygel proved a Thom-Sebastiani type isomorphism for matrix factorization categories corresponding to a LG model $(X,f)$, where $X$ is a smooth $\DM$ stack and $f: X \to \A^1$ is a flat morphism. In this article, we adapt Preygel's proof to the setting of coherent matrix factorization categories. In particular, we prove the following:

\begin{thm}[\thmref{thm:thom-sebastiani}]\label{thm:ts-intro}
Let $(X,f)$ and $(Y,g)$ be two LG models.
Let $i: X_0 \hookrightarrow X$ and $j: Y_0 \hookrightarrow Y$ be the inclusions of the derived zero loci of $f$ and $g$, respectively.
Let $k: (X \times Y)_0 \hookrightarrow X \times Y$ be the inclusion of the derived zero locus of $f \boxplus g$.
Let $\ell: X_0 \times Y_0 \hookrightarrow (X \times Y)_0$ be the natural inclusion.
Let $Z_X$ and $Z_Y$ be closed sub-stacks of $X_0$ and $Y_0$, respectively.
Then we have the following Thom-Sebastiani type equivalence of $k((\beta))$-linear $\infty$-categories:
\begin{align}\nonumber
    MF^{coh, \infty}_{Z_X}(X,f) \otimes_{k((\beta))} MF^{coh, \infty}_{Z_Y}(Y,g) &\xrightarrow{\ell_*(-\boxtimes -)} MF^{coh, \infty}_{Z_X \times Z_Y}(X \times Y, f \boxplus g).
\end{align}  
 On restriction to compact
objects we get the following equivalence of $k((\beta))$-linear $\infty$-categories: 
\begin{align}\nonumber
    MF^{coh}_{Z_X}(X,f) \otimes_{k((\beta))} MF^{coh}_{Z_Y}(Y,g) &\xrightarrow{\ell_*(-\boxtimes -)} MF^{coh}_{Z_X \times Z_Y}(X \times Y, f \boxplus g).
\end{align}  
\end{thm}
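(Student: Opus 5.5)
We follow Preygel's strategy and present $MF^{coh,\infty}$ as a $k((\beta))$-linear localization of coherent (ind-coherent) sheaves on the derived zero locus. Concretely, we use the model
\[
MF^{coh,\infty}_{Z_X}(X,f) \simeq \IndCoh_{Z_X}(X_0)\otimes_{k[[\beta]]} k((\beta)),
\]
where $\beta$ has cohomological degree $2$. The $k[[\beta]]$-linear structure on $\IndCoh(X_0)$ comes from the action of $\mathcal{O}(\Omega_{\A^1}\A^1)$-type endomorphisms, i.e.\ from the derived self-intersection $X_0\times_X X_0 \simeq X_0\times_{0}(0\times_{\A^1}0)$. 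Equivalently, it comes from the cochains $C^*(B\mathbb{G}_a)$ acting on $\IndCoh(X_0)$ through $0\times_{\A^1}0 \simeq \Omega_0\A^1$. With this presentation the theorem splits into two steps.

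\textbf{Step 1: a $k[[\beta]]$-linear Künneth statement before inverting $\beta$.} By the Künneth formula for $\IndCoh$ of quasi-smooth (hence eventually coconnective, finite type) derived DM stacks in characteristic $0$, exterior product gives
\[
\IndCoh(X_0)\otimes_k \IndCoh(Y_0)\simeq \IndCoh(X_0\times Y_0),
\]
compatibly with set-theoretic supports $Z_X\times Z_Y$. Next we identify the relative tensor product over the two actions. The stack $X_0\times Y_0$ maps to $0\times 0 \subset \A^2$, and $(X\times Y)_0 = (X\times Y)\times_{\A^1,+}0$. The fiber square
\[
X_0\times Y_0 \simeq (X\times Y)_0\times_{\A^1,\,\mathrm{pr}} 0,
\]
taken over the antidiagonal $\A^1\to\A^2$, exhibits $\ell$ as the base change of $0\to\A^1$. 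Using Preygel's argument and the "$\IndCoh$ of a fiber product over $0\to \A^1$" description (Koszul duality between $\mathcal{O}_0$-modules over $\A^1$ and $k[[\beta]]$), we obtain
\[
\IndCoh(X_0)\otimes_{k[[\beta]]}\IndCoh(Y_0)\simeq \IndCoh\big(X_0\times Y_0\big)\otimes_{k[[\beta_1,\beta_2]]}k[[\beta]] .
\]
After inverting $\beta$, $\ell_*$ identifies this with $\IndCoh_{X_0\times Y_0}((X\times Y)_0)\otimes_{k[[\beta]]}k((\beta))$.

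\textbf{Step 2: supports.} Here we use that after inverting $\beta$ the category only sees the singular support, i.e.\ the critical locus. The objects supported on $(X\times Y)_0$ away from $X_0\times Y_0$ become $\beta$-torsion. Indeed, near a point where $f\neq 0$, the function $f=-g$ gives a local trivialization of $(X\times Y)_0\to Y$ in which the $k[[\beta]]$-action is nilpotent, since the relevant map to $\A^1$ is smooth there. We then run the same argument with supports $Z_X\times Z_Y$, using local-cohomology functors to cut supports, which commute with all the constructions above.

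\textbf{Compact objects.} These follow because $MF^{coh}$ is the idempotent-complete category of compact objects of $MF^{coh,\infty}$, and the compact objects of a tensor product of compactly generated categories are generated by exterior products of compacts. So it suffices to show that $\ell_*(\mathcal{F}\boxtimes\mathcal{G})$ of coherent objects are compact and generate, which follows from full faithfulness plus the generation statement.

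\textbf{Main obstacle.} The hardest step is the identification in Step 1. There we must check that the relative tensor product over the $C^*(B\mathbb{G}_a)$-actions matches $\ell_*$ and is fully faithful on generators. I would first verify full faithfulness on generators $i_*\mathcal{O}$-type coherent objects, computing Homs via base change, and then prove essential surjectivity on compact objects by showing that the generators' images generate $MF^{coh}_{Z_X\times Z_Y}$ after inverting $\beta$.
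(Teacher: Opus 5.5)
Your Step~1 follows the same route as the paper's proof, which is modelled on Preygel. First, exterior product gives $D^bCoh_{Z_X}(X_0)\otimes_k D^bCoh_{Z_Y}(Y_0)\simeq D^bCoh_{Z_X\times Z_Y}(X_0\times Y_0)$. Then one recognises $\ell$ as the derived zero locus of a function on $W=(X\times Y)_0$ and applies the supported form of \cite[Corollary~3.2.4]{Preygel:2011}.

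What you leave implicit, and what actually resolves your ``main obstacle'', is the change of coordinates $(u,v)\mapsto(u+v,u-v)$. With $\beta_\pm=\beta_X\pm\beta_Y$, the relative tensor product $-\otimes_{k[[\beta_X,\beta_Y]]}k[[\beta]]$ becomes $-\otimes_{k[[\beta_-]]}k$. Here $\beta_-$ is exactly the operator attached to $\ell$ as the zero locus of $h_-=(f\boxminus g)|_W$. This gives three things:
\begin{enumerate}
\item The triangle $A[1]\to\ell^*\ell_*A\to A$, whose connecting map is $\beta_-$, gives full faithfulness of $\ell_*$.
\item The finite filtration by powers of the ideal of $X_0\times Y_0$ gives essential surjectivity onto $D^bCoh_{Z_X\times Z_Y}(W)$.
\item The surviving $\beta_+$ is the $f\boxplus g$-action.
\end{enumerate}
So the equivalence already holds $k[[\beta]]$-linearly for the $PreMF^{coh}$'s, before inverting $\beta$, and base change to $k((\beta))$ is formal. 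Two minor points: the ring acting is $C^*(BS^1;k)$, not $C^*(B\mathbb{G}_a)$. Also, Künneth only needs bounded structure sheaves as in Notation~\ref{notat}; quasi-smoothness of $X$ is not a hypothesis of this theorem.

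The genuine error is Step~2. It is false that objects on $(X\times Y)_0$ supported away from $X_0\times Y_0$ become $\beta$-torsion. Take $X=Y=\mathbb{A}^2$, $f=ab+1$, $g=cd-1$, so that $f\boxplus g=ab+cd$. The origin lies in $(X\times Y)_0$ but not in $X_0\times Y_0=\{ab=-1,\ cd=1\}$. The skyscraper at the origin has infinite projective dimension on the cone $\{ab+cd=0\}$, so it is nonzero in $MF^{coh}(X\times Y,f\boxplus g)$. By Lemma~\ref{lem:beta torsion} it is therefore not $\beta$-torsion.

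Your justification breaks because $f$ has a critical point with critical value $1\neq 0$. Smoothness of $f$ over $\mathbb{G}_m$ is precisely Assumption~\ref{assumption}. The paper imposes it only later, to prove Lemma~\ref{lem:supp=not supp}, and it is not part of the Thom--Sebastiani statement.

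Fortunately the step is also unnecessary. The target of the theorem is already the supported category $MF^{coh,\infty}_{Z_X\times Z_Y}(X\times Y,f\boxplus g)$. Running Step~1 with supports $Z_X$, $Z_Y$ from the start lands there directly, so delete the first half of Step~2. Your passage between the large and small versions is fine in either direction; the paper proves the small statement and Ind-completes, while you go the other way.
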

As a consequence, we get a duality theorem for the category $MF^{coh}(X,f)$, which is as follows. Preygel \cite[Theorem 4.2.2]{Preygel:2011} proved this when $X$ is a smooth $\DM$ stack with a flat potential $f$.
\begin{thm}[\thmref{thm:duality}]\label{thm:duality-intro}
Let $(X,f)$ be a LG model.
Let $Z$ be a closed sub-stack of $X_0$. By Remark \ref{remk:dualizable}, $MF^{coh, \infty}_{Z}(X,f)$ is a dualizable $\infty$-category.
Then we have the following duality equivalence of $k((\beta))$-linear $\infty$-categories:
\[MF^{coh, \infty}_{Z}(X,f)^{\vee} \cong MF^{coh, \infty}_{Z}(X,-f).\]
\end{thm}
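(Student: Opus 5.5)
The plan is to follow Preygel's strategy from \cite[Theorem 4.2.2]{Preygel:2011}: produce an evaluation and a coevaluation for the pair $MF^{coh,\infty}_Z(X,f)$, $MF^{coh,\infty}_Z(X,-f)$ and check the triangle identities, using the Thom–Sebastiani equivalence of \thmref{thm:ts-intro} throughout to identify tensor products of factorization categories with factorization categories on products. Since $MF^{coh,\infty}_Z(X,f)$ is dualizable by Remark \ref{remk:dualizable} (it is compactly generated by $MF^{coh}_Z(X,f)$), its dual is $\Ind(MF^{coh}_Z(X,f)^{op})$. It therefore suffices to give a $k((\beta))$-linear equivalence
\[
\D : MF^{coh}_Z(X,f)^{op} \xrightarrow{\simeq} MF^{coh}_Z(X,-f),
\]
and I would take $\D$ to be Grothendieck–Serre duality $\sHom(-,\omega_{X_0})$ on the derived zero locus, transported to factorizations.

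The first step is to build $\D$. Write $MF^{coh}_Z(X,f)$ as the $k((\beta))$-linearization (that is, $-\otimes_{k[[\beta]]}k((\beta))$, or equivalently the quotient by perfect objects) of $\mathrm{Coh}_Z(X_0)$ with its $k[[\beta]]$-linear structure, where $\beta$ acts through the $B\G_a$/loop action coming from $X_0=X\times_{\A^1}0$. Because $X$ is quasi-smooth, $X_0$ is quasi-smooth as well, and hence Gorenstein, so $\omega_{X_0}$ is a shifted line bundle. Consequently $\sHom(-,\omega_{X_0})$ is an anti-autoequivalence of $\mathrm{Coh}_Z(X_0)$ that maps $\mathrm{Perf}$ to $\mathrm{Perf}$, and so it descends to the singularity/factorization quotient. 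The point requiring care is that $\D$ reverses the sign of the $\beta$-action. The cleanest way to see this is to note that dualizing the Koszul resolution of $\cO_{X_0}$ over $X$, which is the complex $\cO_X\xrightarrow{f}\cO_X$, yields the complex with $-f$, or equivalently that the $B\G_a$-action on $\mathrm{Coh}(X_0)$ is intertwined with its inverse. This gives $\D$ as a $k((\beta))$-linear equivalence onto $MF^{coh}_Z(X,-f)$, with the support condition $Z$ preserved.

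The second step is to identify the pairing. Using \thmref{thm:ts-intro} with $(Y,g)=(X,-f)$ gives
\[
MF^{coh,\infty}_Z(X,f)\otimes MF^{coh,\infty}_Z(X,-f)\simeq MF^{coh,\infty}_{Z\times Z}(X\times X, f\boxminus f).
\]
The diagonal $\Delta:X\to X\times X$ lands in the zero locus of $f\boxminus f$, and $\Delta_*\cO_X$ supported on $Z$ (that is, $\Delta_*$ of $\cO_X$ set-theoretically restricted to $Z$, which is coherent because $Z$ lies in the zero locus) provides the coevaluation. The evaluation is the functor $\mathrm{Hom}_{MF}(\cO_\Delta,-)$, equivalently $\Delta^*$ followed by $R\Gamma$ on $MF(X,0)_Z$. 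Its compatibility with $\D$ reduces to the statement $\mathrm{Hom}(\D E,F)\simeq \mathrm{ev}(E\boxtimes F)$, which is Serre duality on $X_0$ together with the projection formula.

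The main obstacle I expect is the triangle identities, since each one involves a convolution over the middle factor of $X\times X\times X$. This requires $\Delta_*$ to commute with $\ell_*$ and the base change along $(X\times X)_0\times_X(X\times X)_0$, and that base change is derived and non-flat here because $f$ is not assumed flat. My first approach would be to work at the level of $\Ind\mathrm{Coh}$ and use the base change for $\Ind\mathrm{Coh}$ of quasi-smooth stacks, invoking Assumption \ref{assumption} for properness of $Z$ relative to the support or for the finiteness needed to make $R\Gamma$ land in coherent objects. I would then check the identity $\cO_\Delta\circ\cO_\Delta\simeq\cO_\Delta$ after inverting $\beta$.
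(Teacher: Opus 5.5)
Your first reduction is sound, and it already suffices. The dual of $\Ind(\mathcal{C})$ is $\Ind(\mathcal{C}^{op})$, so a $k((\beta))$-linear equivalence $\D\colon MF^{coh}_Z(X,f)^{op}\simeq MF^{coh}_Z(X,-f)$ proves the theorem, with no triangle identities to check.

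\textbf{The gap.} Everything then rests on the one non-formal point: that Grothendieck duality on $X_0$ is $k[[\beta]]$-linear after twisting by $\beta\mapsto-\beta$. You only gesture at this (``dualizing $\cO_X\xrightarrow{f}\cO_X$ gives $-f$''), and it is not an argument. The $k[[\beta]]$-structure is an $\infty$-categorical module structure over $D^bCoh(\B)\simeq Perf(k[[\beta]])$ through the convolution action $v_*(pr_{\B}^*(-)\otimes pr_{X_0}^*(-))$. To make your step rigorous you would have to:
\begin{enumerate}
\item show $\D_{X_0}$ is linear for that action, using $\D v_*\simeq v_*\D$ and $\D(\sF\boxtimes\sG)\simeq\D\sF\boxtimes\D\sG$;
\item identify $\D_{\B}$, under $D^bCoh(\B)\simeq Perf(k[[\beta]])$, with the $k[[\beta]]$-dual composed with $\beta\mapsto-\beta$.
\end{enumerate}

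\textbf{The quotient is misidentified.} $-\otimes_{k[[\beta]]}k((\beta))$ is \emph{not} the quotient by perfect objects here. The $\beta$-torsion part is $\langle i^*D^bCoh(X)\rangle$ up to idempotents, which is larger than $Perf(X_0)$ when $X$ is singular; that difference is the whole point of $MF^{coh}$. So ``$\D$ preserves $Perf$'' is the wrong reason for descent. The right one is $\D_{X_0}i^*\simeq i^!\D_X\simeq i^*\D_X\otimes\omega_i$, or simply $k[[\beta]]$-linearity followed by base change.

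\textbf{How the paper avoids the sign issue.} It defines
\[
\langle\sF,\sG\rangle=RHom_{PreMF^{coh,\infty}(X^2,f\boxplus -f)}\bigl(\ol\Delta_*\cO_X,\ \ell_*(\sF\boxtimes\sG)\bigr).
\]
This pairing is $k[[\beta]]$-bilinear for free, because Thom--Sebastiani is $k[[\beta]]$-linear and Hom in a $k[[\beta]]$-linear category is $k[[\beta]]$-linear. Hence the induced functor to the dual is automatically $k[[\beta]]$-linear, and only a $k$-linear equivalence remains. On compact objects this functor is identified with $\D$ using:
\begin{itemize}
\item the adjunction $\ell^*\dashv\ell_*$;
\item base change $\ell^*\ol\Delta_*\simeq\Delta_{X_0*}i^*$ and $i^*\cO_X\simeq\cO_{X_0}$;
\item the adjunction $\Delta_{X_0*}\dashv\Delta_{X_0}^!$ and $\Delta_{X_0}^!(\sF\boxtimes\sG)\simeq\sHom(\D\sF,\sG)$.
\end{itemize}
Your step 3 already contains this computation. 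Use it to \emph{produce} the linear structure on $\D$, rather than trying to verify the sign directly.

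\textbf{Step 4 is unnecessary, and as stated it would fail.} With evaluation $RHom(\ol\Delta_*\cO_X,-)$, the coevaluation is not $\Delta_*\cO_X$. Since $\Delta^!(\cO\boxtimes T)\simeq\sHom(\D\cO,T)=\sHom(\omega,T)$, your composite is twisting by $\omega^{-1}$, not the identity, unless $\omega$ is trivial. The paper shows the identity kernel is $\ol\Delta_*R\Gamma_{X_0}(\omega_X)$. Two smaller points:
\begin{itemize}
\item $R\Gamma_Z\cO_X$ is only ind-coherent, not coherent.
\item Assumption \ref{assumption} plays no role in duality. The paper introduces it afterwards, only to drop support conditions in the functor-category lemma, and it has nothing to do with properness.
\end{itemize}
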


\subsection{Structure of the paper} In  \secref{sec:prelim} we fix notations and conventions that will be used in the text. We recall the definition of the category of coherent matrix factorizations following Preygel \cite{Preygel:2011} and \cite{HLP_hodge} and recall some standard definitions that will be used in the paper. In Section \ref{sec:thom-sebastiani} we prove \thmref{thm:ts-intro}, which is a Thom-Sebastiani type isomorphism for the category of coherent matrix factorizations. In  \secref{sec:duality} we prove \thmref{thm:duality-intro}, which is a duality theorem for the category of relative factorizations. In  \secref{sec:HKR} we prove the main theorem of this text,  \thmref{thm:homo-intro} and  \thmref{thm:cohomo-intro}. 
\subsection{Acknowledgements}
We thank Bertrand T\"oen and Massimo Pippi for helpful discussions and comments on a draft of this paper. We are grateful to Massimo Pippi for pointing out a serious error in the original formulation of the theorem and for suggesting multiple corrections. We also thank ICTS for hosting the program Enumerative Geometry and Categorification (ICTS/EGC2026/07), during which these discussions took place. B.S. gratefully acknowledges the support of ANRF MATRICS grant  ANRF/ARGM/2025/001980/MTR.

\section{Preliminaries}\label{sec:prelim}
In this section, we briefly recall some preliminaries that will be used in the rest of the text. 
\subsection{Notations and conventions}
\begin{notat}\label{notat}
Let $k$ denote an algebraically closed field of characteristic $0$. All stacks in this text are assumed to be separated, finite-type, perfect (see  \defref{defn: perfect stack}) derived Deligne-Mumford ($\DM$) stacks (see Section \ref{sec:homotopical alg}) with affine stabilizers such that the structure sheaf $\cO_X$ is cohomologically bounded.  All functors are assumed to be derived unless mentioned otherwise.  

Let $X$ be a derived $\DM$ stack over $\Spec k$. The underlying classical stack is denoted by $X_{cl}$. Let $\Qcoh (X)$ denotes the stable $\infty$-category of quasi-coherent complexes on $X$ \cite[Chapter 3, Definition.~1.1.4]{Gaitsgory-book-1}. Let $Perf(X)$ denote the full subcategory of $\Qcoh(X)$ consisting of perfect complexes. Let $D^b{Coh}(X)$ denotes the stable $\infty$-category of cohomologically bounded objects $E^{\bullet}$ in $\Qcoh(X)$ such that $H^*(E^{\bullet})$ is a coherent $H^0(\cO_X)$-module. Let $\IndCoh(X)$ denotes the Ind-completion of $D^b{Coh}(X)$ (see \cite[Definition.~5.3.5.1]{HTT}). Let $\sF \in \IndCoh(X)$ and $\sG \in \IndCoh(Y)$, their external tensor product $\sF \boxtimes \sG \in \IndCoh(X\times Y)$ is defined by,
    \begin{equation}\label{eqn:boxtimes}
        \sF \boxtimes \sG := pr_X^*(\sF) \otimes_{\cO_{X\times Y}} pr_Y^*(\sG),
    \end{equation}
    where $pr_X: X\times Y \to X$ and $pr_Y:X \times Y \to Y$ denote the projections to $X$ and $Y$ respectively.
Let $Z \xrightarrow{i} X$ be a closed sub-stack of $X$ then  $\IndCoh_Z(X)$ and  $D^bCoh_Z(X)$ denotes the full subcategories of $\IndCoh(X)$ and $D^bCoh(X)$ consisting of objects supported on $Z$. 
We denote the idempotent completion  of an $\infty$-category $\sC$ by $\ol{\sC}$ (see \cite[Section 5]{HTT}).

Let $DSt_{/k}$ denotes the category of derived $1$-Deligne-Mumford stacks over $\Spec k$ and let $AffDSt_{/k}$ denote the full subcategory of $DSt_{/k}$ consisting of affine derived stacks \cite[Section~2]{champes-affine} and \cite[Section~3.2]{Ben_Zvi_2012}. 
For a topological space $\sT$, we denote its singular co-chain complex with coefficients in $k$ by $C^*(\sT;k)$. An $\infty$-category $\sC$ is said to be $k$-linear if the homotopy category \cite[Section 1.1.2]{HA} of $\sC$ is $k$-linear. For a derived stack $X \in DSt_{/k}$ let $\mathbb{L}_X$ denote the cotangent complex of $X \to \Spec k$ \cite[Section 3.2]{Thesis_Lurie}. For a morphism $f: X\to Y$ between two derived stacks $X$ and $Y$, the induced morphism between the underlying classical stacks is denoted by $f_{cl}: X_{cl} \to Y_{cl}$. For an $\infty$-category $\mathcal{C}$, the category of compact objects is denoted by $\mathcal{C}^c$.

Unless otherwise specified, all fiber products in the text are taken over $\Spec k$. $-\times^h_S -$ denotes the homotopy fiber product \cite[Section 1.2.13]{HTT} over $S$ for any base $S$. We denote the $\infty$-category of spaces by $\sS$ (see \cite[Section 1.2.16]{HTT} ).

Let $\sC$ be a stable $\infty$-category and $\sC_0\xhookrightarrow{i} \sC$ be a full subcategory closed under fiber and cofiber. The quotient $\sC/\sC_0$ is defined by the following homotopy pushout square in the category of stable $\infty$-categories. In this case the quotient is also a stable $\infty$-category \cite[Lemma 1.1.3.3]{HA} (see also \cite[end of Section 2.1]{MR3877165}):
    \begin{equation}\label{eqn:quotient}
        \begin{tikzcd}[cramped]
    	{\sC_0} & {\sC} \\
    	0 & {\sC/\sC_0}.
    	\arrow["i", from=1-1, to=1-2]
    	\arrow[from=1-1, to=2-1]
    	\arrow[from=1-2, to=2-2]
    	\arrow[from=2-1, to=2-2]
        \end{tikzcd}
    \end{equation}
     Let $X$ be a derived stack and $f: X\to \A^1$ be a morphism. The derived zero locus of $f$ is defined as $X_0:= 0 \times^h_{\A^1} X$ given by the  following homotopy pullback square in $DSt_{/k}$,
     \begin{equation}\label{eqn:zero-locus}
        \begin{tikzcd}[cramped]
    	{X_0} & {X} \\
    	0 & {\A^1}.
    	\arrow[ from=1-1, to=1-2]
    	\arrow[from=1-1, to=2-1]
    	\arrow["f",from=1-2, to=2-2]
    	\arrow["0",from=2-1, to=2-2]
        \end{tikzcd}
    \end{equation}
\end{notat}

\subsection{A model for \texorpdfstring{$dgCat^{\infty}_k$}{dgCat-infty-k}} 
We recall a model for $dgCat^{\infty}_k$ following \cite[Section 2.1]{MR3877165}. Let
$dgCat_k$ denote the category of all small $k$-linear dg-categories together with 
$k$-linear dg functors. By \cite[Definition 2.11, 2.12, 2.14]{homotopy-th-of-dg-cats-Tabuada} 
$dgCat_k$ has a cofibrantly generated model category structure where the weak 
equivalences are DK equivalences\cite[Theorem 2.8]{homotopy-th-of-dg-cats-Tabuada}. 
The underlying $\infty$-category of this model category is denoted by $dgCat_k$. 
As every DK equivalence is a Morita equivalence, we can therefore Bousfield localize 
the above model category at Morita equivalences to get a second cofibrantly generated 
combinatorial model category structure. In this model structure, the weak equivalences are Morita 
equivalences. Similarly, the underlying $\infty$-category of this model category is 
denoted by $dgCat^{idem}_k$. 

\subsection{Derived stacks and homotopical algebra } \label{sec:homotopical alg}
To setup notations we  recall some standard definitions in derived algebraic geometry. For a detailed exposition,  see \cite{khan_derived-geom},\cite{Gaitsgory-book-1} and \cite{HAG_2}. 
\subsubsection{Derived stacks}
For a commutative ring $R$, let $dCAlg_R$ denotes the $\infty$-category of derived commutative $R$-algebras \cite[Example 1.2.20]{khan_derived-geom}.
\begin{defn}\cite[Definition. 1.3.6]{khan_derived-geom}
    Let $R$ be a commutative ring. A \emph{derived stack} over $R$ is a functor 
    \[X: dCAlg_R \to \sS\] which satisfies \'etale descent.
\end{defn}
\begin{defn}\cite[Definition. 1.3.10]{khan_derived-geom}
    A derived stack $X$ is called a \emph{derived scheme} if there exists a collection of open immersions $\{U_{\alpha} \hookrightarrow X\}$ where $U_{\alpha}$ are affine derived schemes and the induced morphism $\coprod_{\alpha}U_{\alpha} \twoheadrightarrow X$ is surjective.
\end{defn}
For a derived commutative ring $A$ let $D(A)$ denote the derived $\infty$-category of $A$-modules \cite[Section 1.3.3]{khan_derived-geom}, \cite[Section 1.3.2]{HA}.

\begin{defn}\cite[Definition 2.45 \& Example 2.46]{khan_derived-geom}\label{defn:quasi-smooth}
    A morphism $f:X\to Y$ between derived 1-stacks $X$ and $Y$ is said to be {\em quasi-smooth} if the cotangent complex $\L_f$ is a perfect complex and is of Tor-amplitude $\leq 1$ and $f_{cl}$ is locally of finite presentation. For a quasi-smooth morphism,  $f:X\to Y$, let $\sN_f$ denote the co-normal complex of $f$. It follows from  \cite[pg. 3]{vit_khan} that $\sN_f = \L_f[-1].$
\end{defn}

\subsubsection{Affine stacks}\label{sec:affine stack}
 We briefly recall the definitions of affinization and affine stacks following \cite{champes-affine} and \cite[Section 3]{Ben_Zvi_2012}.
Let $DGA_k$ denotes the $\infty$-category of commutative dg $k$-algebras. Recall the following natural adjunction \cite[Proposition 3.1]{Ben_Zvi_2012}:
\[\cO : DSt_{/k}\leftrightarrows DGA_k^{op} :\Spec.\]
\begin{defn}\cite[Definition. 3.2]{Ben_Zvi_2012}
    The endofunctor 
    \[Aff : DSt_{/k} \to DSt_{/k}\] defined by
    \[Aff(X):=\Spec(\cO_X)\]
    is called the affinization functor. A derived stack $X$ is said to be \emph{affine stack} if the canonical morphism $X\to Aff(X)$ is an equivalence.
\end{defn}
\begin{remk}
    By \cite[Proposition 2.2.7]{champes-affine} it follows that affine stacks are closed under small limits.
\end{remk}
\begin{exms}
 A derived affine scheme is clearly an affine stack.
      The stacks $K(\G_a,m)$ for $m>0$ are affine stacks \cite[Lemma 2.2.5]{champes-affine}. 
\end{exms}
One important result we need in Section \ref{sec:HKR} is the following:
\begin{prop}\cite[Lemma 3.13]{Ben_Zvi_2012}
    The affinization morphism is an equivalence of group-derived stacks
    \begin{equation}\label{affinization of BG-a}
        Aff(S^1) \xrightarrow{\simeq} B\G_a=K(\G_a,1). 
    \end{equation}
\end{prop}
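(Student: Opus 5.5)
The plan is to compute both sides at the level of functions. The map will be built from the constant stack $S^1 = B\mathbb{Z}$ to $B\G_a$. Then I show it induces an equivalence on $\cO(-)$, and use that $B\G_a$ is already an affine stack.

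\textbf{Step 1 (the comparison map).} Regard $S^1$ as the constant derived stack on the simplicial set $B\mathbb{Z}$. The inclusion of abelian groups $\mathbb{Z} \to \G_a(k) = k$, $n \mapsto n$, defines a homomorphism of constant group stacks $\mathbb{Z} \to \G_a$. Delooping gives a map of group stacks $\phi : S^1 = B\mathbb{Z} \to B\G_a$. By the universal property of the unit $X \to Aff(X)$ coming from the adjunction $\cO \dashv \Spec$, $\phi$ factors as
\[ S^1 \to Aff(S^1) \xrightarrow{Aff(\phi)} Aff(B\G_a). \]
Since $B\G_a = K(\G_a,1)$ is an affine stack (the Examples above), $Aff(B\G_a) \simeq B\G_a$. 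It therefore suffices to prove that $\phi^* : \cO(B\G_a) \to \cO(S^1)$ is a quasi-isomorphism of commutative dg $k$-algebras.

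\textbf{Step 2 (computing both sides).} For the constant stack, $\cO(S^1) \simeq C^*(S^1;k)$. This algebra is formal with cohomology $k \oplus k[-1]$, the free graded-commutative algebra $k[\epsilon]$ on one generator with $|\epsilon| = 1$ (so $\epsilon^2 = 0$). For $B\G_a$, $\cO(B\G_a)$ is the rational cohomology complex $C^*(\G_a, k)$, i.e. the cobar construction on the Hopf algebra $k[x]$. I expect this computation to be the main obstacle, and it is where characteristic $0$ is used. I would identify $H^*(\G_a,k) \simeq \mathrm{Ext}^*_{k[x]\text{-comod}}(k,k)$ with Lie algebra cohomology $H^*(\mathrm{Lie}\,\G_a, k) = \Lambda^*(k[-1])$. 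One route is to note that in characteristic $0$ the coalgebra $k[x]$ is the divided-power coalgebra dual to $k[[t]]$. Then $\mathrm{Ext}$ over it is $\mathrm{Ext}_{k[[t]]}(k,k)$, computed by the Koszul resolution $0 \to k[[t]] \xrightarrow{t} k[[t]] \to k$. This gives $k$ in degrees $0$ and $1$, and the product of two degree-one classes vanishes for degree reasons (and by graded commutativity). Formality of $C^*(\G_a,k)$ also follows for degree reasons: a dga with cohomology $k\oplus k[-1]$ admits a map from the free cdga $k[\epsilon]$ hitting the degree-one class, and that map is a quasi-isomorphism.

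\textbf{Step 3 (the map is an isomorphism in degree 1).} It remains to check that $\phi^*$ sends the generator to a nonzero class. In degree $1$, $H^1(\G_a,k) = \Hom_{grp}(\G_a,\G_a) \ni \mathrm{id}$ and $H^1(S^1;k) = \Hom(\mathbb{Z},k)$. The map $\phi^*$ is restriction along $\mathbb{Z} \to k$, which sends $\mathrm{id}$ to the inclusion $n \mapsto n$. This is nonzero because $\mathrm{char}\, k = 0$. So $\phi^*$ is an isomorphism on $H^0$ and $H^1$, hence a quasi-isomorphism, and $Aff(S^1) \simeq B\G_a$.

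\textbf{Step 4 (group structure).} To see that the equivalence respects the group structures, note that $\phi$ is a map of group stacks. It remains to check that $Aff$ preserves finite products of the stacks involved. This follows from the Künneth equivalence $C^*(S^1 \times S^1;k) \simeq C^*(S^1;k) \otimes C^*(S^1;k)$, which holds for finite spaces, together with the fact that affine stacks are closed under limits. Hence $Aff(S^1)$ inherits a group structure, and $Aff(\phi)$ is an equivalence of group derived stacks.
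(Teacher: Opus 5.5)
Your proof is correct and follows essentially the same route as the source: the paper gives no argument of its own and only cites \cite[Lemma 3.13]{Ben_Zvi_2012}, which likewise uses that $B\G_a$ is affine (To\"en), factors the map $S^1=B\mathbb{Z}\to B\G_a$ induced by $\mathbb{Z}\to\G_a$ through $Aff(S^1)$, and checks that it induces a quasi-isomorphism $\cO(B\G_a)\simeq k\oplus k[-1]\simeq C^*(S^1;k)$. One small correction: in Step 3 the restriction $n\mapsto n$ of $\mathrm{id}$ is nonzero in every characteristic, so characteristic $0$ is not needed there; it is used in Step 2, to get $H^1(\G_a,k)=k$ and $H^{\geq 2}(\G_a,k)=0$.
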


\subsubsection{Homotopy fixed point and homotopy orbit}
In this section, we briefly recall the definitions of homotopy fixed points and homotopy orbits following \cite[Section 2.4]{raksit-hochschildhomologyderivedrham}, \cite{hoyois-homotopy-fixed-points-circle} and \cite[Chapter 1]{thh-scholze}.
Let $\sC$ be a $\infty$-category. Let $G$ be a group. A $G$-equivariant object in $\sC$ is a functor $BG \to \sC$ \cite[Definition. I.1.2]{thh-scholze}. Let the category of all $G$-equivariant objects of $\sC$ be denoted by $\sC^{BG}:= \Fun(BG,\sC).$
\begin{defn}\cite[Definition. I.1.5]{thh-scholze}\label{fixed pt and orbit}
    Let $G$ be a group and $\sC$ be an $\infty$-category. \\
    (i) Assume $\sC$ admits all colimits indexed by $BG$. Then the \emph{homotopy orbits} functor is given by 
    \begin{equation}
        \begin{split}
            (-)_{G} : \sC^{BG} &\to \sC\\
            (F:BG \to \sC) &\mapsto \hocolim_{BG}F.
        \end{split}
    \end{equation}
    (ii) Assume that $\sC$ admits all limits indexed by $BG$. The \emph{homotopy fixed points} functor is given by 
    \begin{equation}
        \begin{split}
            (-)^G : \sC^{BG} &\to \sC\\
            (F:BG \to \sC) &\mapsto \holim_{BG}F.
        \end{split}
    \end{equation}
\end{defn}
In our situation in Section \ref{sec:HKR} we need to work with $S^1$-algebra objects of the $\infty$-category $CAlg_k$ and mixed algebra objects of $CAlg_k$ which is defined as $S^1\mbox{-}CAlg_k$, and $\epsilon\mbox{-}CAlg_k := CoMod_{k\oplus k[-1]}(CAlg_k)$ respectively \cite[Recollection 5.1]{HKR2026}. By \cite[Theorem 4.1]{TV_multi_hkr} we have the following equivalence of $\infty$-categories:
\[A_{\phi} : S^1\mbox{-}CAlg_k \to \epsilon\mbox{-}CAlg_k\]
induced from the isomorphism 
\[\phi:C^*(S^1;k) \simeq k \oplus k[-1].\]
Now, by \cite[Section 5.2]{HKR2026} and \cite[Section 2]{hoyois-homotopy-fixed-points-circle} we have following adjunctions:
\[\begin{tikzcd}[cramped]
	{(Mod_k)^{S^1}} &&& {Mod_k}
	\arrow["{(-)^{S^1}}", shift left=3, from=1-1, to=1-4]
	\arrow["{(-)_{S^1}}"', shift right=5,  from=1-1, to=1-4]
	\arrow["{triv_{S^1}}", from=1-4, to=1-1]
\end{tikzcd}~~~~~~~~\,\,\,\,\,\,\,\,\,\,\,\,\,
\begin{tikzcd}[cramped]
	{\epsilon\mbox{-}Mod_k} &&& {Mod_k}
	\arrow["{Map_{\epsilon\mbox{-}Mod_k}(k,-)}", shift left=3,from=1-1, to=1-4]
	\arrow["{k\otimes_{k[\epsilon]}-}"', shift right=5, from=1-1, to=1-4]
	\arrow["{triv_{\epsilon}}", from=1-4, to=1-1]
\end{tikzcd}\]
We have 
\begin{equation}\label{eqn: fixpt adjunction}
    (-)_{S^1} \dashv triv_{S^1} \dashv (-)^{S^1}, {\rm     and~~~~  } k\otimes_{k[\epsilon]}(-) \dashv triv_{\epsilon} \dashv Map_{\epsilon\mbox{-}Mod_k}(k,-)
\end{equation}
where $\mathrm{triv}_{S^1}$ and $\mathrm{triv}_{\epsilon}$ are functors which equip $M \in Mod_k$ with trivial $S^1$ and $\epsilon$ action respectively, and the equivalence $A_{\phi}$ intertwines left and right adjoints \cite[eqn. 5.2.2]{HKR2026} (also cf. \cite[Notation 2.4.1]{raksit-hochschildhomologyderivedrham}).

\subsection{Matrix factorization category}

We briefly recall some basic definitions from Preygel \cite[Section 3]{Preygel:2011} of Landau-Ginzburg models and matrix factorization categories.

\begin{defn}\cite[Definition. 2.23]{khan-ravi-cohomo-alg-stacks}\label{defn: perfect stack}
    A derived algebraic stack $X$ is said to be a {\em perfect stack} if $\Qcoh(X)$ is compactly generated by the full subcategory $Perf(X)$ of perfect complexes.
\end{defn}

\begin{defn}
A {\em Landau-Ginzburg (LG) model} is a pair $(X, f)$ where $X$ is a derived $\DM$-stack over a field $k$ of characteristic $ 0$ and $f : X \to \A^1$ is a morphism. 
\end{defn}

By abuse of notation we denote $\Spec k$ by $0$, when we consider the inclusion of the closed point $\{0\} \inj \A^1$. Let $\mathbb{B} := 0 \times^h_{\A^1} 0 = \Spec k[\epsilon]/\epsilon^2$ with $\deg \epsilon = +1$.

\begin{equation}\label{eqn:B}
        \begin{tikzcd}[cramped]
    	{\B} & {0} \\
    	0 & {\A^1}.
    	\arrow[ from=1-1, to=1-2]
    	\arrow[from=1-1, to=2-1]
    	\arrow["0",from=1-2, to=2-2]
    	\arrow["0",from=2-1, to=2-2]
        \end{tikzcd}
\end{equation}
Then $\B$ can be seen as a commutative monoid object in $DSt_{/k}$ by the following two operations:
\begin{enumerate}
    \item Define $\mu : \B \times \B \to \B$ by \[\B \times \B = (0 \times_{\A^1} 0) \times (0 \times_{\A^1} 0) \simeq 0 \times_{\A^1} 0 \times_{\A^1} 0 \xrightarrow{p_{13}} \B\]
    where $p_{13}$ is the projection to the first and the third factor.
    \item Define $+:\B \times \B \to \B$ induced from $+:\A^1 \times \A^1 \to \A^1$, see \cite[Construction 3.1.1]{Preygel:2011} for more details.
\end{enumerate}
 Then $\cO_{\B}$ is identified with $C^*(S^1;k) \simeq k \oplus k[-1]$ as the $\E_{\infty}$-coalgebra in the category of dg-algebras. Then taking the cobar construction one gets $k[[\beta]] \simeq C^*(BS^1;k)$ as $\E_{\infty}$-algebra where $\beta$ is a variable of degree 2. As a dg algebra $k[[\beta]]$ is 
\[\cdots \to 0 \to 0 \to k \to 0 \to k \to 0 \to k \to 0 \to \cdots,\]
where the first $k$ is in degree 0. 

Consider the category $\IndCoh(\B)$ equipped with a symmetric monoidal structure given by the convolution operation, which is defined by $\sF \circ \sG := \mu_*(\sF \boxtimes \sG)$.
Then there exists a symmetric monoidal equivalence \cite[Proposition 3.1.4]{Preygel:2011}
\[(\IndCoh(\B), \circ) \simeq (k[[\beta]]\mbox{-}\Mod, \otimes_{k[[\beta]]}).\]

Let $(X,f)$ be a LG model. By \cite[Construction 3.5]{Preygel:2011}, there exists an action of $\B$ on the derived zero locus $X_0$ of $f$. The action can be seen explicitly by the following Cartesian cube \cite[Section 2.3.35]{MR3877165}. Here, $pr_{\B}$ and $pr_{X_0}$ are projections to $\B$ and $X_0$ respectively.

\[\begin{tikzcd}[cramped]
	{X_0\times \B} && {X_0} & \\
	& {X_0} && X \\
	\B && 0 \\
	& 0 && {\A^1 }
	\arrow["v"{description}, from=1-1, to=1-3]
	\arrow["{pr_{X_0}}"{description}, from=1-1, to=2-2]
	\arrow["{pr_{\B}}"{description}, from=1-1, to=3-1]
	\arrow["i"{description}, from=1-3, to=2-4]
	\arrow[from=1-3, to=3-3]
	\arrow["i"{description, pos=0.3}, from=2-2, to=2-4]
	\arrow[from=2-2, to=4-2]
	\arrow["f"{description}, from=2-4, to=4-4]
	\arrow[from=3-1, to=3-3]
	\arrow[from=3-1, to=4-2]
	\arrow[from=3-3, to=4-4]
	\arrow["0"{description}, from=4-2, to=4-4]
\end{tikzcd}\]
For $\sF \in \IndCoh(\B)$ the action of $\sF$ on $\IndCoh(X_0)$ is given by,
\begin{equation}\label{eqn:action}
\sF \cdot \sG := v_*(pr_{\B}^*(\sF) \otimes pr_{X_0}^*(\sG)).
\end{equation}
Therefore, $\IndCoh(X_0)$ has a $\IndCoh(\B)$-module structure. 
Since  $pr_{\B}$ is flat, it has finite Tor dimension. Therefore, $\pr_{\B}^*$ preserves the property of bounded cohomological dimension. Hence, the action restricts to compact objects and
we get an action of $D^bCoh(\B)$ on the category $D^bCoh(X_0)$. By the proof of \cite[Proposition 3.1.4, second last paragraph at page 13]{Preygel:2011} we have, $RHom_{\cO_{\B}\mbox{-}\Mod}(k,-)$ gives an equivalence between $D^bCoh(\B)$ and $Perf(k[[\beta]])$. This gives a  $k[[\beta]]$-linear structure on the stable $\infty$-category $D^bCoh(\B)$.

\begin{defn}\cite[Section 2.4]{Preygel:2011}
 Let $(X, f)$ be a LG model and $X_0$ be the derived zero locus of $f$. 
 The {\em pre-matrix factorization category} denoted by $PreMF(X,f)$ is defined as the $k[[\beta]]$-linear $\infty$-category where the underlying $k$-linear category is $D^bCoh(X_0)$
and $\beta$ acts via the natural transformation induced by the
$S^1$-action on $D^bCoh(X_0)$ constructed in \cite[Construction 3.1.1]{Preygel:2011}.
\end{defn}

\begin{defn}\cite[Section 2.4]{Preygel:2011}\label{defn:mf absolute}
Let $(X, f)$ be a LG model. 
We define the {\em matrix factorization category} denoted by 
$MF(X, f)$ as
\[MF(X,f) := PreMF(X,f) \otimes_{k[[\beta]]} k((\beta)).\]
\end{defn}

\begin{defn}\label{defn:sing cat}
    For an algebraic stack $X$ we define the following singularity categories:
    \[Sing(X) := D^bCoh(X)/Perf(X)\] and
    \[Sing^{\infty}(X) := \IndCoh(X)/QCoh(X)\]
    as $k$-linear $\infty$-categories where the quotient is in the sense of \eqref{eqn:quotient}.
\end{defn}

\begin{remk}
    To make sense of \defref{defn:sing cat} we need to assume $\cO_X$ is cohomologically bounded; otherwise, $Perf(X)$ need not be a subcategory of $D^bCoh(X)$ (see \cite[Remark 3.2.7]{motivic-pippi}).
\end{remk}

For a LG model $(X,f)$, the relationship between the category of matrix factorizations and the singularity category of the derived zero locus $X_0$ is given by Orlov's theorem \cite[Proposition 3]{Orlov-mf}. In particular, for smooth $\DM$ stacks, one has the following result.

\begin{prop}\cite[Proposition 3.4.1]{Preygel:2011}\label{prop. orlov}
Let $(X,f)$ be a LG model with $X$ a smooth $\DM$ stack and $f$ flat. Then the natural $k$ -linear functor 
\[PreMF(X,f) \to MF(X,f)\]
factors through the quotient functor $D^bCoh(X_0) \to Sing(X_0)$ 
and the induced functor 
\[Sing(X_0) \to MF(X,f)\]
is an idempotent completion.
\end{prop}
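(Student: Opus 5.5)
The plan follows Orlov's argument as adapted by Preygel. It has two parts: first show that the functor $PreMF(X,f)\to MF(X,f)$ kills $Perf(X_0)$, so it factors through $Sing(X_0)$; then show that the induced functor $Sing(X_0)\to MF(X,f)$ is fully faithful with dense image up to summands.

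\textbf{Factorization.} Since $PreMF(X,f)$ has underlying category $D^bCoh(X_0)$, we get $MF(X,f)=D^bCoh(X_0)\otimes_{k[[\beta]]}k((\beta))$. Its mapping complexes are
\[
RHom_{MF}(E,F)\simeq RHom_{X_0}(E,F)\otimes_{k[[\beta]]}k((\beta)),
\]
which is the colimit of $RHom(E,F)\xrightarrow{\beta}RHom(E,F)[2]\xrightarrow{\beta}\cdots$. So it suffices to show that for $P\in Perf(X_0)$ the action of $\beta$ on $RHom(P,F)$ is nilpotent, or at least locally nilpotent on cohomology. Because $P$ is perfect, $RHom(P,F)\simeq R\Gamma(X_0,P^\vee\otimes F)$, and $\beta$ acts through the $\B$-action. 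This action is pulled back from the $k[[\beta]]$-linear structure on $Perf(X_0)$, and on $\cO_{X_0}$ it is trivialized by the Koszul presentation $X_0=X\times_{\A^1}0$ with $f$ flat, i.e.\ $\cO_{X_0}=[\cO_X\xrightarrow{f}\cO_X]$.

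Concretely, $\beta$ acting on $\cO_{X_0}$ is the class in $Ext^2_{X_0}(\cO_{X_0},\cO_{X_0})$ of $\cO_{X_0}$ seen as $i^*$ of an object of $X$. This class is the composite $\cO_{X_0}\to i^*i_*\cO_{X_0}\to\cO_{X_0}[2]$ through the conormal $\cO[1]$, and it becomes zero after one step because $\cO_{X_0}$ comes from $\cO_X$. Hence $\beta$ is nilpotent on all of $Perf(X_0)$, since $Perf(X_0)$ is generated under finite colimits and summands by $i^*Perf(X)$. Inverting $\beta$ therefore kills $Perf(X_0)$, and the functor descends to $Sing(X_0)$.

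\textbf{Full faithfulness.} Here I would use that $X$ is smooth. Then every $F\in D^bCoh(X_0)$ has $i_*F$ perfect on $X$, and the standard cone $i^*i_*F\to F$ has fiber $F\otimes\sN[1]=F[2]$ as a twist. I would identify the map $F\to F[2]$ with $\beta$, using Preygel's Construction 3.1.1 compatibility. It follows that $\mathrm{cone}(\beta:F\to F[2])$ lies in $Perf(X_0)$. Consequently, in $Sing(X_0)$ the map $\beta$ is already invertible, so
\[
Hom_{Sing}(E,F)\simeq \colim_n Hom_{Sing}(E,F[2n]).
\]
It remains to compare this with $\colim_n Hom_{D^b}(E,F[2n])$. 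Orlov's lemma does this: for $n\gg0$, relative to the amplitude of $E$ and using smoothness of $X$ to bound the Tor dimension of $i^*$, the map $Hom_{D^b}(E,F[2n])\to Hom_{Sing}(E,F[2n])$ is an isomorphism. The two colimits therefore agree.

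\textbf{Essential image.} Every object of $MF(X,f)$ is by definition a retract of the image of an object of $D^bCoh(X_0)$, since base change along $k[[\beta]]\to k((\beta))$ and passing to the idempotent-complete target only add summands. Together with full faithfulness, this identifies $MF(X,f)$ with $\ol{Sing(X_0)}$.

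The main obstacle is the compatibility identifying Preygel's $S^1$/$\B$-action operator $\beta$ with the Orlov-type periodicity map $F\to F[2]$ coming from $i^*i_*F$. The rest is Orlov's standard Ext-stabilization argument.
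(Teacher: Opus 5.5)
Your two key inputs are correct. They are exactly the two halves of the paper's argument for the coherent analogue (Lemma~\ref{lem:beta torsion} and the lemma after it), which specializes to this proposition because $D^bCoh(X)=Perf(X)$ when $X$ is smooth; the paper itself only cites Preygel here.
\begin{itemize}
\item $\beta$ vanishes on every $i^*E$: in the triangle $i^*i_*i^*E\xrightarrow{c}i^*E\xrightarrow{\beta}i^*E[2]$ the counit $c$ has the section $i^*u_E$. Hence $\beta$ is nilpotent on the thick closure $Perf(X_0)$ of $i^*Perf(X)$.
\item $\mathrm{cone}(\beta_F)\simeq i^*i_*F[1]$ is perfect because $i_*F\in Perf(X)$. (The fiber of $i^*i_*F\to F$ is $F[1]$, not $F[2]$; it is the cofiber that is $F[2]$.)
\end{itemize}
What you call the main obstacle is not one: identifying $\beta$ with the boundary map of $i^*i_*F\to F$ is Preygel's Example 3.1.13, which the paper quotes directly.

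You diverge from the paper at full faithfulness. The paper's route is formal. $Perf\,k[[\beta]]\to Perf\,k((\beta))$ is the Verdier quotient by $\beta$-torsion modules. Tensoring over $D^bCoh(\B)$ therefore exhibits $MF(X,f)$ as the idempotent completion of $D^bCoh(X_0)$ modulo its $\beta$-torsion objects, and the two facts above identify those objects with $Perf(X_0)$.

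You instead invoke Orlov's Ext-stabilization, and this is the weak point of the proposal. Orlov's statement is for schemes with enough locally free sheaves, not DM stacks. Also, ``bounding the Tor dimension of $i^*$'' is not what drives it. One would instead need control of $Ext^{\gg 0}_{X_0}(E,P)$ for perfect $P$, for example via $RHom_{X_0}(E,i^*Q)\simeq RHom_X(i_*E,Q)[1]$ together with finite cohomological dimension.

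You do not need the stabilization at all, because your two facts already give the comparison of Homs.
\begin{itemize}
\item \emph{Injectivity.} If $h\colon E\to F[2n]$ becomes zero in $Sing(X_0)$, it factors through some perfect $P$. By naturality $\beta^m h$ then factors through $\beta^m_P=0$, so $h$ is zero in $\colim_n Hom_{D^b}(E,F[2n])$.
\item \emph{Surjectivity.} Take a roof $E\xleftarrow{s}E'\xrightarrow{g}F$ with $P=\mathrm{cone}(s)$ perfect and connecting map $\delta\colon P[-1]\to E'$. Naturality gives $\beta^m_{E'}\circ\delta=\delta[2m]\circ\beta^m_{P[-1]}=0$. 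So $\beta^m_{E'}=t\circ s$ for some $t\colon E\to E'[2m]$, and $g[2m]\circ t$ represents $\beta^m_F\circ(gs^{-1})$.
\end{itemize}
Since $\beta$ is invertible in $Sing(X_0)$, the map $\colim_n Hom_{D^b}(E,F[2n])\to Hom_{Sing}(E,F)$ is a bijection, and the same holds in every degree. With this replacement your argument is complete and needs no extra hypotheses. It amounts to the hands-on version of the paper's localization-sequence argument.
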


\subsection{Coherent matrix factorization categories}
In this subsection, we define the coherent matrix factorization category associated with a LG model. The matrix factorization category introduced in Definition \ref{defn:mf absolute} behaves well when the stack $X$ is smooth; however, for singular stacks, Proposition \ref{prop. orlov} need not hold. 
Coherent matrix factorization categories are the correct setting to generalize Proposition \ref{prop. orlov} for singular schemes. See, for example \cite{pippi}, \cite{coherent-analogues}. 
Motivated by these observations, we consider the coherent matrix factorization category to prove a HKR-type theorem for LG models in the non-smooth setting.

\begin{defn}[Coherent Matrix Factorization Categories]\label{defn:coh factorizations}

    \begin{enumerate}
    \item[]
        \item Define the  {\em coherent big pre matrix factorization category} $PreMF^{coh, \infty}(X,f)$ corresponding to a LG model $(X,f)$ as the $k[[\beta]]$-linear $\infty$-category with the underlying $k$-linear category $\IndCoh(X_0)$.  
        \item Define the {\em coherent big matrix factorization category} as $$MF^{coh, \infty}(X,f) := PreMF^{coh, \infty}(X,f) \otimes_{k[[\beta]]} k((\beta)).$$
        \item Define the {\em coherent big pre matrix factorization category supported at $Z$}, denoted by 
        $PreMF^{coh, \infty}_Z(X,f)$ for a closed sub-stack $Z \subset X_0$ as the $k[[\beta]]$-linear
        $\infty$-category with the underlying $k$-linear category $\IndCoh_Z(X_0)$, and the {\em coherent big matrix factorization category supported at $Z$} as $$MF^{coh, \infty}_Z(X,f) := PreMF^{coh, \infty}_Z(X,f) \otimes_{k[[\beta]]} k((\beta)).$$
        \item Define the coherent {\em pre matrix factorization category supported at $Z$} denoted by 
        $PreMF^{coh}_Z(X,f)$ as the $k[[\beta]]$-linear $\infty$-category with 
        the underlying $k$-linear category 
        is $D^bCoh_Z(X_0)$.
        \item Define the {\em coherent matrix factorization category supported at $Z$} denoted by 
        $$MF^{coh}_Z(X,f) :=PreMF^{coh}_Z(X,f)\otimes_{k[[\beta]]} k((\beta)).$$
    \end{enumerate}
    Henceforth, for $Z=X_0$ we write $MF^{coh}(X,f)$ and $MF^{coh, \infty}(X,f)$ instead of $MF^{coh}_Z(X,f)$ and $MF^{coh, \infty}_Z(X,f)$ respectively.
\end{defn}

\begin{lem}\label{lem:beta torsion}
    The category of $\beta$-torsion elements for the action of $k[[\beta]]$ on $D^bCoh(X_0)$ is given by the category generated by $i^*D^bCoh(X)$ up to idempotent completion.
\end{lem}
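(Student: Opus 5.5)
The plan is to compare the $\beta$-action with the adjunction $i^* \dashv i_*$ for the closed immersion $i : X_0 \hookrightarrow X$. Here "$\beta$-torsion" means an object $\sF \in D^bCoh(X_0)$ with $\sF \otimes_{k[[\beta]]} k((\beta)) \simeq 0$. Equivalently, $\beta^n : \sF \to \sF[2n]$ is null-homotopic for some $n \geq 0$. This equivalence holds because the endomorphism complex of a coherent object is a finitely generated $k[[\beta]]$-module in each degree, so the first step is to record this reformulation. Once it is in place, the torsion objects visibly form a thick, idempotent-complete subcategory of $D^bCoh(X_0)$. It then suffices to prove two inclusions.

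\emph{Step 1: $i^*D^bCoh(X)$ is torsion.} For $\sG \in D^bCoh(X)$, the two maps $v, pr_{X_0} : X_0 \times \B \to X_0$ in the Cartesian cube agree after composing with $i$. Hence $v^* i^*\sG \simeq pr_{X_0}^* i^*\sG$, naturally in $\sG$, compatibly with the $\B$-action. This gives a trivialisation of the $\B$-action, hence of the $S^1$-action, on objects pulled back from $X$, so $\beta$ acts by zero on $i^*\sG$. (Note that $i^*\sG$ stays in $D^bCoh(X_0)$, since $X_0 \to X$ is the base change of $0 \to \A^1$, which has finite Tor dimension.)

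\emph{Step 2: the key fiber sequence.} For $\sF \in D^bCoh(X_0)$ we have $i_*\sF \in D^bCoh(X)$, because $i$ is a closed immersion. By base change along the square defining $\B$, we get $i^*i_*\sF \simeq \sF \otimes_{k} \cO_{\B}$ as $\cO_{X_0}$-modules, with $\cO_\B = k \oplus k[-1]$. This yields a fiber sequence
\[
\sF[1] \to i^*i_*\sF \to \sF \xrightarrow{\ \delta\ } \sF[2].
\]
I expect the main obstacle to be identifying the connecting map $\delta$ with the action of $\beta$ (up to a unit). I would prove this by using the equivalence $D^bCoh(\B) \simeq Perf(k[[\beta]])$ from \cite[Proposition 3.1.4]{Preygel:2011}. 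The point is that $\beta$ is the class of the extension $k \to \cO_\B \to k[-1]$, viewed via $RHom_{\cO_\B}(k,-)$, and this extension acts on $\sF$ through the action formula \eqref{eqn:action}; that formula produces exactly the sequence above. Granting this identification, we get $\mathrm{cone}(\beta : \sF \to \sF[2]) \simeq i^*(i_*\sF)[1] \in i^*D^bCoh(X)$.

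\emph{Step 3: torsion objects lie in the idempotent-completed thick closure.} Let $\sF$ be torsion, with $\beta^n \simeq 0$. Apply the octahedral axiom to the factorisation $\beta^n = \beta \circ \beta^{n-1}$ and induct on $n$. This shows that $\mathrm{cone}(\beta^n)$ is an iterated extension of the objects $\mathrm{cone}(\beta : \sF[2j] \to \sF[2j+2])$, so it lies in the thick subcategory generated by $i^*D^bCoh(X)$. Since $\beta^n \simeq 0$, we have $\mathrm{cone}(\beta^n) \simeq \sF[2n] \oplus \sF[1]$. So $\sF$ is a direct summand of an object of that thick subcategory, and therefore lies in its idempotent completion. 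Combining Steps 1 and 3 with the thickness of the torsion subcategory proves the lemma.
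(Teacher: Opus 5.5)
Your Steps 2 and 3 follow the paper's proof. The paper also uses the triangle $i^*i_*\sF\to\sF\xrightarrow{\beta}\sF[2]$, which it quotes from \cite[Example 3.1.13]{Preygel:2011}, then octahedral induction on $\beta^n$, then the splitting $\mathrm{cone}(\beta^n)\simeq\sF[2n]\oplus\sF[1]$ once $\beta^n\simeq 0$.

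\textbf{A false claim in Step 2.} You assert $i^*i_*\sF\simeq\sF\otimes_k\cO_{\B}$ \emph{as $\cO_{X_0}$-modules}, and this is false in general.
\begin{itemize}
\item If it held, your sequence $\sF[1]\to i^*i_*\sF\to\sF$ would be $\sF$ tensored with a split sequence of $k$-modules, so $\delta=0$.
\item Even as an abstract isomorphism it proves too much. $\sF$ would be a direct summand of $i^*(i_*\sF)$, hence $\beta$-torsion by your own Step 1, for every $\sF$. Then $MF^{coh}$ would always vanish.
\item A concrete counterexample: take $X=\A^1$, $f=x^2$, $\sF=k$. Then $i^*i_*k\simeq\bigl(k[x]/(x^2)\xrightarrow{\,x\,}k[x]/(x^2)\bigr)$. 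This complex is perfect over $k[x]/(x^2)$, whereas $k\oplus k[1]$ is not.
\end{itemize}
The splitting only holds after applying $i_*$ (as $\cO_X$-modules), or when $\sF$ is pulled back from $X$.

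What base change along the top face of the cube, $X_0\times\B\simeq X_0\times_X X_0$, actually gives is $i^*i_*\sF\simeq v_*pr_{X_0}^*\sF$. The action map $v$ sits on one side, and that is what prevents the sequence from splitting. You get the triangle by applying the exact functor $M\mapsto v_*(pr_{\B}^*M\otimes pr_{X_0}^*\sF)$ of \eqref{eqn:action} to the non-split augmentation extension $k[1]\to\cO_{\B}\to k$ of $\cO_{\B}$-modules, whose class is $\beta$. This is the ``action formula'' argument you sketch at the end of Step 2, and it identifies $\delta$ with $\beta$ directly. So delete the intermediate isomorphism, or cite Preygel as the paper does.

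\textbf{A wrong but unneeded justification.} Your reason for reformulating torsion as ``$\beta^n\simeq 0$ for some $n$'' is incorrect. Endomorphism complexes need not be finitely generated over $k[[\beta]]$. For example, with $X=\A^2$ and $f=xy$, $\beta$ acts by $0$ on $\operatorname{End}^*(\cO_{X_0})$, yet $\operatorname{End}^0(\cO_{X_0})=k[x,y]/(xy)$ is infinite-dimensional. No finiteness is needed, though. Morphisms in $D^bCoh(X_0)\otimes_{k[[\beta]]}k((\beta))$ are $\colim_n\Hom(\sF,\sF[2n])$ along $\beta$. So $\operatorname{id}_{\sF}$ dies if and only if $\beta^n\simeq 0$ for some $n$, which is the definition the paper uses.

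\textbf{Step 1 takes a different route.} For the inclusion $i^*D^bCoh(X)\subseteq$ torsion, the paper reuses the same triangle with $\sF=i^*\mathcal{E}$. The counit $c\colon i^*i_*i^*\mathcal{E}\to i^*\mathcal{E}$ has the section $i^*u$, so $\beta=\beta\circ c\circ i^*u=0$.

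Your equivariance argument is correct in spirit, but the inference ``trivialised action, hence $\beta=0$'' needs a line of proof. Combine three facts:
\begin{itemize}
\item $v^*i^*\sG\simeq pr_{X_0}^*i^*\sG$;
\item the projection formula for $v$;
\item the shear automorphism $(x,b)\mapsto(b\cdot x,b)$ of $X_0\times\B$ over $\B$.
\end{itemize}
Together they give $M\cdot i^*\sG\simeq\Gamma(\B,M)\otimes_k i^*\sG$, naturally in $M$. So $\beta$ acts through $\Gamma(\B,\beta)\in\Hom_{Vect_k}(k,k[2])=0$.

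Since you need the Step 2 triangle anyway, the paper's one-line splitting is more economical. Your route does show \emph{why} pulled-back objects are torsion: they carry a trivialisation of the $\B$-action.
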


\begin{proof}
    Let $\mathcal{F} \in D^bCoh(X_0)$ be a $\beta$-torsion element. 
        Therefore, the action of $\beta^m$ is null homotopic on $\sF$ for some $m\in \mathbb{N}$. We will apply induction on $m$. 
    
    By \cite[Example 3.1.13]{Preygel:2011} we have a triangle
    \[i^*i_*\sF \to \sF\xrightarrow{\beta}  \sF[2] \xrightarrow{+1}.\]
    Therefore, $\hofib(\beta) \in i^*D^bCoh(X)$. Now, consider the following three triangles:
    \[\hofib(\beta^{n-1}) \to \sF \xrightarrow{\beta^{n-1}} \sF[2n-2] \xrightarrow{+1},\]
    \[\hofib(\beta)[2n-2] \to \sF[2n-2] \xrightarrow{\beta} \sF[2n] \xrightarrow{+1},\] and
    \[\hofib(\beta^{n}) \to \sF \xrightarrow{\beta^{n}} \sF[2n] \xrightarrow{+1}.\]
    By octahedral axiom \cite[Definition 1.1.4.5, TR4]{HA} we get the following triangle:
    \begin{equation}\label{triangle 1}
        \hofib(\beta^{n-1}) \to \hofib(\beta^n) \to \hofib(\beta)[2n-2] \xrightarrow{+1}.
    \end{equation}
    Now, assume $\hofib(\beta^{n-1}) \in i^*D^bCoh(X)$. Then form \eqref{triangle 1} we have,
    \[\hofib(\beta^n) \in \langle i^*D^bCoh(X) \rangle.\]
    Therefore, by induction $\hofib(\beta^m) \in \langle i^*D^bCoh(X) \rangle.$ By our assumption, $\beta^m$ is homotopic to 0. Thus, 
    \[\hofib(\beta^m) \simeq \hofib(\sF \xrightarrow{0}\sF[2m]) \simeq \sF \oplus \sF[2m-1].\]
    Therefore, we get
    \[\sF \in \overline{\langle i^*D^bCoh(X) \rangle}.\]

    Conversely, let $\mathcal{E} \in D^bCoh(X)$. Then $i^*\mathcal{E}\in i^*D^bCoh(X) \subseteq D^bCoh(X_0).$ By \cite[Example 3.1.13]{Preygel:2011} we have the following triangle:
    \[i^*i_*i^*\mathcal{E} \xrightarrow{c} i^*\mathcal{E} \xrightarrow{\beta} i^*\mathcal{E}[2] \xrightarrow{+1},\]
    where $c$ is induced by the counit of the adjunction $(i^*,i_*)$. Let $u$ be the unit of the adjunction $(i^*,i_*)$. Then $i^*u$ gives a section of $c$. Therefore, $\beta = \beta \circ (c\circ i^*u) = (\beta \circ c)\circ i^*u = 0$. Thus $i^*\mathcal{E}$ is a $\beta$-torsion element. Hence we have,
    \[\overline{\langle i^*D^bCoh(X) \rangle} \subseteq \overline{\langle \beta\mbox{-}torsion\rangle}.\]
    This completes the proof.
\end{proof}

The following lemma is analogous to \cite[Theorem 2.7]{coherent-analogues}, where Efimov and Positselski proved that $D^b_{Sing}(X_0/X)$ is equivalent to the absolute derived category of coherent factorizations, and in \cite[Theorem 2.8]{coherent-analogues} authors proved the large version of relative singularity category \cite[Section 2.8]{coherent-analogues} is equivalent to co-derived category \cite[Section 1.3]{coherent-analogues} of quasi-coherent factorizations. For smooth $\DM$ stack $X$ Preygel proved analogous result in \cite[Proposition 3.4.1]{Preygel:2011}.

\begin{lem}
    The natural functor 
    \[PreMF^{coh}(X,f) \to MF^{coh}(X,f) \]
    factors as
\[\begin{tikzcd}[cramped]
	{PreMF^{coh}(X,f)} &&& {MF^{coh}(X,f)} \\
	& {D^b_{Sing}(X_0/X)} & {\dfrac{D^bCoh(X_0)}{\langle i^*D^bCoh(X)\rangle}}
	\arrow[from=1-1, to=1-4]
	\arrow[from=1-1, to=2-2]
	\arrow["{:=}"{description}, draw=none, from=2-2, to=2-3]
	\arrow[from=2-3, to=1-4]
\end{tikzcd}\]
and the induced map from $D^b_{Sing}(X_0/X) \to MF^{coh}(X,f)$ is an idempotent completion.
\end{lem}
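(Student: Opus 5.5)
The plan is to reduce the statement to a general fact about inverting a degree-two element acting on a small $k[[\beta]]$-linear stable $\infty$-category. For such a category $\sC$, the base change $\sC\otimes_{k[[\beta]]}k((\beta))$ is computed as the idempotent completion of the Verdier quotient of $\sC$ by its $\beta$-torsion objects. Combined with \lemref{lem:beta torsion}, which identifies the $\beta$-torsion objects of $D^bCoh(X_0)$ with $\overline{\langle i^*D^bCoh(X)\rangle}$, this gives both the factorization and the idempotent-completion statement.

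\textbf{Step 1: the factorization.} The functor $PreMF^{coh}(X,f)\to MF^{coh}(X,f)$ is $\sF\mapsto \sF\otimes_{k[[\beta]]}k((\beta))$. In the target, $\beta$ acts invertibly, so every $\beta$-torsion object becomes zero. By \lemref{lem:beta torsion} (the converse direction), each $i^*\mathcal E$ with $\mathcal E\in D^bCoh(X)$ is $\beta$-torsion and is therefore killed. By the universal property of the quotient \eqref{eqn:quotient} (a pushout of stable $\infty$-categories), the functor factors uniquely through $D^b_{Sing}(X_0/X)=D^bCoh(X_0)/\langle i^*D^bCoh(X)\rangle$.

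\textbf{Step 2: identify $MF^{coh}$ as a localization.} I would write $k((\beta))=\colim(k[[\beta]]\xrightarrow{\beta}k[[\beta]][2]\xrightarrow{\beta}\cdots)$. For $\sF,\sG\in D^bCoh(X_0)$, mapping spectra in $\sC\otimes_{k[[\beta]]}k((\beta))$ (before idempotent completion) are then
\[
\Hom(\sF,\sG)\otimes_{k[[\beta]]}k((\beta))\simeq \colim_n \Hom(\sF,\sG[2n]),
\]
with transition maps given by $\beta$. This is the standard description of base change along a localization $R\to R[\beta^{-1}]$ for a small $R$-linear category: $\sC\otimes_R R[\beta^{-1}]$ is the idempotent completion of the localization of $\sC$ at the maps $\beta:\sF\to\sF[2]$. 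Inverting these maps is the same as taking the Verdier quotient by the thick subcategory generated by their cones, namely the objects $\hofib(\beta)\simeq i^*i_*\sF$ from the triangle in \cite[Example 3.1.13]{Preygel:2011}. These lie in $\langle i^*D^bCoh(X)\rangle$. Conversely, the thick closure of these cones contains all $\beta$-torsion objects, by the induction in the proof of \lemref{lem:beta torsion}, and hence contains $i^*D^bCoh(X)$.

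So the kernel of the localization is exactly $\overline{\langle i^*D^bCoh(X)\rangle}$. Quotienting by a subcategory or by its idempotent completion gives the same Verdier quotient up to idempotent completion. Hence $D^b_{Sing}(X_0/X)\to MF^{coh}(X,f)$ is fully faithful, and its image generates under retracts, which is exactly the claim that it is an idempotent completion.

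\textbf{Main obstacle.} The delicate step is justifying that the tensor product $\otimes_{k[[\beta]]}k((\beta))$ in $dgCat^{idem}_k$ is computed by the Verdier localization at $\beta$. I would first try to pass to Ind-completions. There, $\Ind(\sC)\otimes_{k[[\beta]]}k((\beta))\simeq \mathrm{Mod}_{k((\beta))}(\Ind\sC)$, which is the left orthogonal of $\beta$-locally-torsion objects. This is a smashing (finite) localization whose kernel is compactly generated by the cones of $\beta$. Thomason–Neeman localization then identifies its compact objects with the idempotent completion of $\sC/\langle \mathrm{cone}(\beta)\rangle$. The remaining point is to check that compact objects of $\Ind(\sC)\otimes_{k[[\beta]]}k((\beta))$ agree with $MF^{coh}(X,f)$ as defined; this holds because tensor products of small idempotent-complete categories correspond to tensor products of their compactly generated Ind-completions.
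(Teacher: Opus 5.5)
Your proposal is correct and follows essentially the same route as the paper. The paper tensors the localization sequence $Perf(\B)\to D^bCoh(\B)\to Sing(\B)$, identified with $\{\beta\text{-torsion}\}\to Perf\,k[[\beta]]\to Perf\,k((\beta))$, with $D^bCoh(X_0)$ over $D^bCoh(\B)$, and then identifies the kernel with $\langle i^*D^bCoh(X)\rangle$ via Lemma~\ref{lem:beta torsion}; this is your Verdier-localization-at-$\beta$ argument, and you are more explicit than the paper in showing that this kernel is the thick closure of the cones $i^*i_*\sF$ of $\beta$.
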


\begin{proof}
    By \lemref{lem:beta torsion} we get the required factorization. 
    Consider the triangle, 
    \[Perf(\mathbb{B}) \to D^bCoh(\B) \to Sing(\B).\]
    By \cite[Lemma 3.1.9]{Preygel:2011}, we get that this can be identified with the following triangle:
    \[\left\{\beta\mbox{-}{\rm torsion~elements~in~} k[[\beta]]\mbox{-}Mod\right\} \to Pref~k[[\beta]] \to Perf~k((\beta)).\]
    Applying $D^bCoh(X_0) \otimes_{D^bCoh(\B)}(-)$ we get the following triangle:
    \begin{equation}\label{equation 1}
        D^bCoh(X_0)\otimes_{D^bCoh(\B)}Perf(\B) \to D^bCoh(X_0) \to D^bCoh(X_0)\otimes_{D^bCoh(\B)}Sing(\B).
    \end{equation}
    Now, again by \lemref{lem:beta torsion} we get,
    \[D^bCoh(X_0)\otimes_{D^bCoh(\B)}Perf(\B) \simeq \langle i^*D^bCoh(X)\rangle.\]
    Hence by triangle \eqref{equation 1} we get,
    \[D^bCoh(X_0)\otimes_{D^bCoh(\B)}Sing(\B) \simeq \dfrac{D^bCoh(X_0)}{\langle i^*D^bCoh(X)\rangle}\]
    up to idempotent completion. Hence, 
    \[MF^{coh}(X,f) \simeq \overline{\left(\dfrac{D^bCoh(X_0)}{\langle i^*D^bCoh(X)\rangle}\right)}.\]
\end{proof}

\subsection{Hochschild homology and cohomology}
In this subsection  we recall the definition and some properties of  the Hochschild homology, following
 \cite[Section 2.2]{chen} and \cite[Section 5.5]{HA}. 
\begin{defn}\cite[Definition 2.2.1]{chen}
    Let $\sD$ be a symmetric monoidal $\infty$-category with monoidal unit 
	$1_{\otimes}$. Let $X\in \sD$ be a dualizable object with dual $X^{\vee}$. Let 
	 $\eta : 1_{\otimes} \to X\otimes X^{\vee}$ denote co-evaluation map, and  $\epsilon:X\otimes X^{\vee} \to 1_{\otimes}$ denote the evaluation 
	map. Define the \emph{dimension} of 
	$X$ as
    \[\dim (X):= \epsilon \circ \eta \in \End_{\sD}({1_{\otimes}}).\]
\end{defn}
By \cite[Proposition 4.6.1.10]{HA}, the space of dualizing structures on $X$ is contractible, 
so $\dim(X)$ is defined uniquely up to a unique isomorphism in the homotopy category 
$Ho(\End_{\sD}(1_{\otimes}))$.
\begin{notat}\label{notat:1}
    Let $Pr^L$ denote the $\infty$-category of presentable stable $\infty$-categories.
	 By \cite[Proposition 4.8.1.15]{HA} the $\infty$-category $Pr^L$ admits a symmetric 
	 monoidal structure. By \cite[Proposition 5.5.3.8]{HTT}, $\Pr^L$ has a internal mapping 
	 space denoted by $Fun^L(-,-)$. Let $Pr^L_k$ denote the $\infty$-category of 
	 $k$-linear presentable stable $\infty$-categories. Furthermore, this tensor 
	 product induces a symmetric monoidal structure on $Pr^L_k$. Let $Pr^{L,w}_{k,\vee}$ 
	 be the $\infty$-category of $k$-linear, dualizable, presentable, stable $\infty$ 
	 categories with left adjoint functors which preserve compact objects. For a $\sD \in Pr^L$, $\sD^w$ denotes the category of compact objects of $\sD$. Let $Vect_k$ denotes the derived $\infty$-category of $k$-vector spaces.
\end{notat}

\begin{defn}\cite[Definition 2.2.6]{chen}
    The \emph{Hochschild homology} functor is defined as
    \[HH_{\bullet} := \dim : Pr^{L,w}_{k,\vee} \to Fun^L_k(Vect_k, Vect_k) \simeq Vect_k.\]
\end{defn}

\begin{defn}\cite[Definition 2.2.10]{chen}\label{defn:HH}
    Let $\sC_{k}^{st}$ denote the $\infty$-category of small stable $k$-linear $\infty$-categories. 
    Define Hochschild homology to be 
    \[HH_{\bullet}:= \dim \circ \Ind : \sC_{k}^{st} \to Fun^L_k(Vect_k,Vect_k)\simeq Vect_k,\]
    more precisely for $\sD \in \sC_{k}^{st}$ $HH_{\bullet}(\sD)$ be the image of $k$ under the composition,
    \[Vect_k \xrightarrow{coev} Fun^L_k(\sD,\sD) \xrightarrow{\simeq} \sD^{\vee} \otimes \sD \xrightarrow{ev} Vect_k.\]
\end{defn}

\begin{remk}\cite[Remark 2.2.11]{chen}
    If $\sD$ is a compactly generated category, then by definition 
	$HH_{\bullet}(\sD) = HH_{\bullet}(\sD^{c})$. Hence, $HH_{\bullet}(MF^{coh, \infty}(X,f)) = HH_{\bullet}(MF^{coh}(X,f))$.
\end{remk}

For a $k$-dg category $C$ let $C\mbox{-}{\rm mod}$ be the category of $C$ dg-modules. 
Then $Fun^L_k(C\mbox{-}{\rm mod},C\mbox{-}{\rm mod}) \simeq C\otimes C^{op}$-mod by dg-Morita theory \cite{derived-morita-theory}. Under this equivalence, the 
co-evaluation $k$-mod $\to C\otimes C^{op}$-mod corresponds to the functor 
$C\otimes_k -$ and the evaluation corresponds to $-\otimes_{C\times C^{op}}-$ 
\cite[Example 2.2.17]{chen}. Therefore, the Hochschild homology 
\[HH_{\bullet}(C\mbox{-}\Mod) = C\otimes_{C\otimes C^{op}}^L C,\] 
coincides with  the usual definition of Hochschild homology 
\cite[Section 4.6]{pippi} given by
\[HH_{\bullet}(C) = H^{-\bullet}\left(id_C \bigotimes_{C\otimes C^{op}} id_C\right)\]
where $id_C$ denotes the identity $C\otimes C^{op}$ bi-module.

\begin{defn}\cite[Definition 5.3 \& 5.4]{iwanari-hochschild-cohomology}\label{defn:cohomo}
    Let $\sD \in \sC_{k}^{st}$. Then \emph{Hochschild cohomology} of $\sD$ is denoted by $HH^{\bullet}(\sD)$ and defined as
    \[HH^{\bullet}(\sD) := Hom_{\Fun^L(\Ind\sD,\Ind\sD)}(id,id) \in Vect_k.\]
\end{defn}

\subsection{Loop stack and orbifold inertia stack}
In this section we recall from \cite[Section 1 \& 2]{HKR2026} some standard constructions and results on a derived $\DM$ stack 
$X$ namely the loop stack $\mathcal{L} X$ and the derived orbifold inertia stack $I^{DM}X$. 
Let 
$\Map(-,-)$ denote the derived mapping stack functor in the category of derived stacks, defined as 
\[\Map(X,Y)(T) := Map_{DSt_{/k}}(X\times T, Y).\]

\begin{defn}\cite[Equation 3.0.2]{HKR2026}
    Let $X$ be a derived stack. The \emph{loop stack} of $X$ denoted by $\mathcal{L}X$ is defined by the following homotopy pullback square in $DSt_{/k}$
    \begin{equation}\label{eqn:loop stack}
        \begin{tikzcd}[cramped]
    	{\mathcal{L}X} & {X} \\
    	X & {X \times X}.
    	\arrow[ from=1-1, to=1-2]
    	\arrow[from=1-1, to=2-1]
    	\arrow["\Delta",from=1-2, to=2-2]
    	\arrow["\Delta"',from=2-1, to=2-2]
        \end{tikzcd}
\end{equation}
This is also equivalent to the derived mapping 
	stack $\Map(S^1, X)$ where $S^1$ is the constant derived stack associated to the topological
	circle. 
\end{defn}

\begin{defn}\cite[Section 4.2]{HKR2026}\label{defn:tangent stack}
	Let $X$ be a derived $\DM$ stack. 
	The shifted tangent bundle $T[-1]X$ is defined as,
	\[T[-1]X := \Spec_X(\Sym(\mathbb{L}_X[1])).\]
\end{defn}

In characteristics 0, $T[-1]X = \Map(B\G_a,X)$ \cite[Theorem 4.8]{HKR2026}.

\begin{defn}\cite[Definition. 3.1]{HKR2026} \cite{khan-ravi-prep}\label{defn:orbifold inertia}
	Let $X$ be a derived $\DM$ stack. Let $C_i$ be the cyclic group of order $i$ for every integer $i>0$. If $i$ divides $j$, then there is a canonical group homomorphism $C_j \twoheadrightarrow C_i$. 
    This group homomorphism induces a morphism $\Map(BC_j,X) \to \Map(BC_i,X)$ in $DSt_{/k}$. 
    The orbifold inertia stack of $X$ is defined as
	\[I^{DM}X := \colim_i \Map(BC_i,X)\]
	where the colimit is in the category $DSt_{/k}$.
\end{defn}

\begin{thm}(HKR for derived $\DM$ stacks, \cite[Proposition 5.6]{HKR2026})\label{HKR-fu+}
    For a derived $\DM$ stack $X$ locally almost of finite presentation over a 
	commutative $\mathbb{Q}$-algebra, there is a canonical equivalence of derived $\DM$ 
	stacks over $X$ which can be lifted to an equivalence over $S^1 \times X$:
    \begin{equation*}
        \xymatrix{
        T[-1] I^{DM} X \ar[rr]^{\widehat{aff}^*}_{\simeq} \ar[dr]&  &\mathcal{L} X \ar[dl] \\
        &X&
        }
    \end{equation*}
    This equivalence is furthermore functorial in $X$.
\end{thm}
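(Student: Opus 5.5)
Since the statement is quoted from \cite[Proposition 5.6]{HKR2026}, the plan is to reconstruct the argument via the affinization of $S^1$. The key input is the identification $T[-1]Y \simeq \Map(B\G_a, Y)$ \cite[Theorem 4.8]{HKR2026} together with \eqref{affinization of BG-a}.

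\textbf{Step 1: construct the comparison map.} Since $S^1 = B\mathbb Z$, the homomorphisms $\mathbb Z \to C_i \times \G_a$, $1 \mapsto (1,1)$, give compatible maps
\[ S^1 \to BC_i \times B\G_a. \]
Restriction along these maps produces
\[ \Map(B\G_a, \Map(BC_i, X)) \simeq \Map(BC_i \times B\G_a, X) \to \Map(S^1, X) = \mathcal L X, \]
and these are compatible with the transition maps $C_j \twoheadrightarrow C_i$. Passing to the colimit over $i$ gives
\[ \colim_i T[-1]\Map(BC_i,X) \to \mathcal L X . \]
I would then check that $T[-1]$, i.e.\ $\Spec_{(-)}\Sym(\L[1])$, commutes with this filtered colimit. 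This should hold because the transition maps $\Map(BC_i,X)\to\Map(BC_j,X)$ are open and closed immersions for a DM stack with finite stabilizers. The resulting map is $\widehat{aff}^*$. Everything is functorial in $X$ and lives over $X$ via evaluation at the base point. The rotation action of $S^1$ on itself and on $BC_i \times B\G_a$ (through $S^1 \to B\mathbb Z$) gives the lift over $S^1\times X$.

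\textbf{Step 2: reduce to local models.} Both sides commute with étale base change in the appropriate twisted sense, so I would reduce to the case $X=[U/G]$, with $U$ a derived affine scheme and $G$ finite. In that case
\[ \mathcal L[U/G] \simeq \coprod_{(g)} [\mathcal L_g U / Z(g)], \qquad I^{DM}[U/G] \simeq \coprod_{(g)} [U^g/Z(g)], \]
where $\mathcal L_g U = U \times_{U\times U} U$ is formed with $\Delta$ and $(1,g)$, and $U^g$ is the derived fixed locus. The comparison map respects these decompositions. It therefore suffices to show that
\[ T[-1]U^g \simeq \mathcal L_g U \]
$Z(g)$-equivariantly.

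\textbf{Step 3: the local computation, which I expect to be the main obstacle.} For $g=1$ this is Ben-Zvi--Nadler: $U$ is affine, so maps $S^1\to U$ factor uniquely through $Aff(S^1)\simeq B\G_a$. For general $g$, I would first use characteristic $0$ and finiteness of $\langle g\rangle$ to show that the twisted loops $\mathcal L_g U$ are concentrated on $U^g$. Concretely, $\mathcal L_g U \simeq \mathcal L(U^g)$; this should follow because $\mathcal{O}_U$ splits into $g$-eigenspaces, and the non-trivial eigenspaces become contractible in the derived self-intersection. The untwisted case then finishes the argument. The delicate points are making this splitting derived and functorial, and verifying the étale-descent reduction of Step 2 compatibly with the colimit defining $I^{DM}X$. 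I would first try to settle the concentration claim on the level of cotangent complexes and classical truncations, and then conclude by nil-invariance.
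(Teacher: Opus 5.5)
The paper gives no argument for this statement. It is quoted from \cite[Proposition 5.6]{HKR2026}, and the construction only surfaces in the proof of Theorem~\ref{lem:de Rham action}, through $\widehat{S^1}=\lim_r BC_r$ and $T[-1]I^{DM}X\simeq\Map(B\G_a\times\widehat{S^1},X)$. Your Step~1 is exactly this: $\widehat{aff}^*$ is restriction along $S^1\to\widehat{S^1}\times B\G_a$. Your handling of the colimit and of rotation-equivariance is fine. The transition maps are étale because in characteristic $0$ the $C_j$-coinvariants of a representation inflated from $C_i$ are its $C_i$-coinvariants, and they are open and closed on truncations.

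What is not established is that this map is an equivalence, and Step~3 as written does not establish it. First, $U^g$ must be the fixed locus of the \emph{finite group} $\langle g\rangle$. That is the component of $\Map(B\langle g\rangle,[U/G])$, equivalently $\Spec$ of the derived $\langle g\rangle$-coinvariants of $\cO_U$ in commutative algebras. It cannot be ``the derived fixed locus'', meaning the derived intersection of $\Delta$ with the graph of $g$: that object is $\mathcal L_gU$ itself. With that reading, both your formula for $I^{DM}[U/G]$ and the target $T[-1]U^g\simeq\mathcal L_gU$ fail already for $g=1$. The theorem is precisely the comparison between fixed points for $\mathbb Z$ (twisted loops) and fixed points for $C_r$ (inertia).

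Second, the eigenspace heuristic is not a mechanism. The nontrivial isotypic parts of $\cO_U$ generate the ideal of the fixed locus; they are not a summand that drops out. For $U=\A^1$ and $g=-1$, the invariants are $k[x^2]$, whereas $\mathcal L_gU=\Spec k$. The heuristic also does not produce or control the $T[-1]$-directions.

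The repair is your own fallback, applied directly to the global map of Step~1. Both sides are derived $\DM$ stacks locally almost of finite presentation.
\begin{enumerate}
\item On classical truncations, both sides are the classical inertia $I(X_{cl})$. This holds because $\Sym(\L[1])$ has $\pi_0=\pi_0\cO$, and every automorphism of a point of a $\DM$ stack has finite order. On truncations, $\widehat{aff}^*$ is $(x,\phi)\mapsto(x,\phi(1))$ for $\phi\colon C_r\to\mathrm{Aut}(x)$.
\item At a point $(x,g)$, put $V=\L_X|_x$. Then $\L_{\mathcal LX}|_{(x,g)}\simeq\mathrm{cofib}(1-g\colon V\to V)$ and $\L_{T[-1]I^{DM}X}|_{(x,g)}\simeq V_{h\langle g\rangle}\oplus V_{h\langle g\rangle}[1]$.
\item The map induced by $S^1\to B\langle g\rangle\times B\G_a$ is the identification $V/(1-g)V\simeq V_{h\langle g\rangle}$ on one summand. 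On the other summand it is the composite $V^{g}\hookrightarrow V\to V_{h\langle g\rangle}$. Both are isomorphisms in characteristic $0$.
\end{enumerate}
Hence $\L_{\widehat{aff}^*}=0$, so $\widehat{aff}^*$ is étale and an equivalence on truncations, and therefore an equivalence.

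This check is pointwise, so Step~2 becomes unnecessary. That is fortunate, because as stated Step~2 is not justified. $\mathcal L$ and $I^{DM}$ commute with base change only along \emph{stabilizer-preserving} étale maps. Covers of $X$ by such charts $[U/G]\to X$ with $U$ affine require finite inertia, e.g.\ separatedness, which Notation~\ref{notat} imposes but the quoted statement does not.
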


\section{Thom-Sebastiani for coherent matrix factorizations}\label{sec:thom-sebastiani}

In this section, our goal is to prove a Thom-Sebastiani type theorem (Theorem \ref{thm:thom-sebastiani}) for the coherent matrix factorization category $MF_Z^{coh}(X,f)$ corresponding to a LG model $(X,f)$. For smooth $\DM$ stacks, the result is due to Preygel \cite[Theorem 4.1.3]{Preygel:2011}.
This is going to be an important step towards the proof of the HKR-type theorem in Section \ref{sec:HKR}. 
Let $(X,f)$ and $(Y,g)$ be two LG models. Let $$(f, g): X\times Y \to \A^1 \times \A^1$$ be the morphism induced by $f\circ pr_X, g \circ pr_Y : X\times Y \to \A^1 \times \A^1.$ Then define
\begin{equation}\label{eqn:f box g}
    f\boxplus g : X\times Y \xrightarrow{(f\circ pr_X, g \circ pr_Y)}\A^1 \times \A^1 \xrightarrow{+} \A^1.
\end{equation}
This gives a symmetric monoidal structure on the category of LG models over $\Spec k$, denoted by $LG_k$ \cite[Construction 2.4]{MR3877165}.
Now we prove the main theorem of this section.

\begin{thm}\label{thm:thom-sebastiani}
Let $(X,f)$ and $(Y,g)$ be two LG models.
Let $i: X_0 \hookrightarrow X$ and $j: Y_0 \hookrightarrow Y$ be the inclusions of the derived zero loci of $f$ and $g$, respectively.
Let $k: (X \times Y)_0 \hookrightarrow X \times Y$ be the inclusion of the derived zero locus of $f \boxplus g$.
Let $\ell: X_0 \times Y_0 \hookrightarrow (X \times Y)_0$ be the natural inclusion.
Let $Z_X$ and $Z_Y$ be closed sub-stacks of $X_0$ and $Y_0$, respectively.
Then we have the following Thom-Sebastiani type equivalence of $k((\beta))$-linear $\infty$-categories:
\begin{align}\label{ts-cpt}
    MF^{coh}_{Z_X}(X,f) \otimes_{k((\beta))} MF^{coh}_{Z_Y}(Y,g) &\xrightarrow{\ell_*(-\boxtimes -)} MF^{coh}_{Z_X \times Z_Y}(X \times Y, f \boxplus g).
\end{align}  
Taking Ind completion, we get,
\begin{align}
    MF^{coh, \infty}_{Z_X}(X,f) \otimes_{k((\beta))} MF^{coh, \infty}_{Z_Y}(Y,g) &\xrightarrow{\ell_*(-\boxtimes -)} MF^{coh, \infty}_{Z_X \times Z_Y}(X \times Y, f \boxplus g).
\end{align}  
\end{thm}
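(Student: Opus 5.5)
The plan is to follow Preygel's proof of \cite[Theorem 4.1.3]{Preygel:2011}, replacing $\Qcoh$/$Perf$ by $\IndCoh$/$D^bCoh$ throughout. I will first prove the statement at the level of pre-matrix factorizations, as an equivalence of $\IndCoh(\B)$-modules (equivalently $k[[\beta]]$-modules). Only afterwards will I invert $\beta$. I work with the big categories first and recover the small version by passing to compact objects. This is legitimate because all categories involved are compactly generated by $D^bCoh$, and $\ell_*(-\boxtimes-)$ sends compact generators to compact objects: $\ell$ is a closed immersion, and $\boxtimes$ preserves $D^bCoh$ over a field.

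\textbf{Step 1 (geometry).} The map $\B\times\B\xrightarrow{+}\B$ is the map of derived zero loci of $+:\A^1\times\A^1\to\A^1$. Base change then identifies
\[
X_0\times Y_0 \simeq (X\times Y)_0\times^h_{\B'}\,\ldots,
\]
or more precisely
\[
X_0\times Y_0\simeq (X\times Y)_0\times_{\A^1} 0,
\]
where $(X\times Y)_0\to \A^1$ is the map $f\circ pr_X$, which equals $-g\circ pr_Y$ on $(X\times Y)_0$. Hence $\ell$ is itself the inclusion of a derived zero locus. $X_0\times Y_0$ carries a $\B\times\B$-action, and $(X\times Y)_0$ carries a $\B$-action compatible along $+$. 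The functor $\ell_*(-\boxtimes-)$ is therefore $\IndCoh(\B)\otimes\IndCoh(\B)$-linear, where the target is regarded as a module via the convolution $+_*$, which is identified with $k[[\beta]]\otimes k[[\beta]]\to k[[\beta]]$ (co)diagonal.

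\textbf{Step 2 (pre-level Künneth).} Next I prove that
\[
\IndCoh_{Z_X}(X_0)\otimes_k \IndCoh_{Z_Y}(Y_0)\simeq \IndCoh_{Z_X\times Z_Y}(X_0\times Y_0).
\]
This is Künneth for $\IndCoh$ on finite-type derived DM stacks over a perfect field, compatible with supports via the local cohomology idempotents. I then obtain
\[
\IndCoh_{Z_X\times Z_Y}(X_0\times Y_0)\otimes_{\IndCoh(\B\times\B)}\IndCoh(\B) \simeq \IndCoh_{Z_X\times Z_Y}((X\times Y)_0)\quad(\ast)
\]
from the fact that $\ell$ is the zero locus of a function pulled back from $\B\times\B\to\A^1$. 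Concretely, $(\ast)$ is an instance of $\IndCoh(W\times_{\A^1}0)\simeq \IndCoh(W)\otimes_{\IndCoh(\B_W)}\ldots$. In Preygel's formulation, this is the statement that $\IndCoh$ of a fiber product along the group action computes the relative tensor, i.e.\ $\ell_*$ exhibits the target as a module over $\IndCoh(\B)$ (the kernel direction $\B\to\B\times\B$ via antidiagonal).

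\textbf{Step 3 (inverting $\beta$).} Tensoring with $k((\beta))$, the left side becomes
\[
MF^{coh,\infty}_{Z_X}\otimes_{k((\beta))}MF^{coh,\infty}_{Z_Y}
\]
tensored over $k((\beta_1,\beta_2))$-type algebra. One checks that $k((\beta))\otimes_{k[[\beta]]\otimes k[[\beta]]}k[[\beta]]$ reduces correctly, since $\beta_1-\beta_2$ acts trivially after base change along the diagonal.

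\textbf{Main obstacle.} The hard part is the descent/base-change statement $(\ast)$ for $\IndCoh$ with supports on possibly singular quasi-smooth stacks. Preygel's argument uses $\Qcoh$ and flatness, whereas here $\IndCoh$ tensor products do not automatically commute with fiber products. I would try to prove $(\ast)$ by monadic descent for the $\B$-action: both sides are modules over $\ell^!\ell_*$, and one compares generators. The target is generated by $\ell_*$ of $D^bCoh$ supported on $Z_X\times Z_Y$, because any coherent sheaf on $(X\times Y)_0$ supported on $Z_X\times Z_Y\subset X_0\times Y_0$ lies in the thick subcategory generated by pushforwards from $X_0\times Y_0$; this follows by a nilpotent-thickening filtration. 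Full faithfulness is then checked on these generators via base change $\ell^!\ell_*\simeq$ action of $\cO_{\B}$.
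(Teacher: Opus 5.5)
This is essentially the paper's proof. You use Künneth with supports, then identify the relative tensor over $k[[\beta]]$ with specialization of the antidiagonal operator $\beta_X-\beta_Y$ along the zero-locus inclusion $\ell\colon X_0\times Y_0\hookrightarrow (X\times Y)_0$ (full faithfulness from base change expressing $\ell^*\ell_*$, or $\ell^!\ell_*$, through the $\B$-action, i.e.\ the $\beta$-triangle; essential surjectivity from generation by $\ell_*$), and finally invert $\beta$ by associativity of relative tensor products. The differences are cosmetic: you cut out $\ell$ by $f\circ pr_X$ rather than $(f-g)|_{(X\times Y)_0}$, so $2$ need not be invertible, and you pass from big to small categories rather than Ind-completing. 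Note that the real content of $(\ast)$ is your final paragraph, not the half-written base-change formula in Step 2.
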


\begin{proof}
Let $R=k[[\beta]]$, $K=k((\beta))$, and set
\[
    h_+=f\boxplus g,
    \qquad
    W=(X\times Y)_0=h_+^{-1}(0),
    \qquad
    h_-=(f\boxminus g)\mid_W.
\]
We first prove the corresponding $R$-linear equivalence
\begin{equation}\label{eqn:ts-premf}
    PreMF^{coh}_{Z_X}(X,f)\otimes_R PreMF^{coh}_{Z_Y}(Y,g)
    \xrightarrow{\ \simeq\ }
    PreMF^{coh}_{Z_X\times Z_Y}(X\times Y,h_+).
\end{equation}

Let $\beta_X$ and $\beta_Y$ denote the two cohomology operators on
$D^bCoh_{Z_X\times Z_Y}(X_0\times Y_0)$ induced by $f$ and $g$,
respectively.  Similarly, let $\beta_+$ and $\beta_-$ denote the
operators induced by $h_+$ and $h_-$.  The linear change of coordinates
\[
    \A^1\times\A^1\longrightarrow\A^1\times\A^1,
    \qquad (u,v)\longmapsto (u+v,u-v),
\]
is an automorphism because $\operatorname{char}(k)=0$.  It identifies the
simultaneous derived zero locus of $(f,g)$ with that of $(h_+,h_-)$ and
identifies the two pairs of operators by
\begin{equation}\label{eqn:ts-beta-change}
    \beta_+\longmapsto\beta_X+\beta_Y,
    \qquad
    \beta_-\longmapsto\beta_X-\beta_Y.
\end{equation}
This is the same $\B^2$-equivariant change of variables as in
\cite[Lemma~4.1.2]{Preygel:2011}.

By 
\cite[Proposition~B.3.2]{Preygel:2011}, exterior product gives an
equivalence
\begin{equation}\label{exterior tensor}
    D^bCoh_{Z_X}(X_0)\otimes_k D^bCoh_{Z_Y}(Y_0)
    \xrightarrow{\ \simeq\ }
    D^bCoh_{Z_X\times Z_Y}(X_0\times Y_0).
\end{equation}
The two commuting $\B$-actions make this equivalence
$k[[\beta_X,\beta_Y]]$-linear.  Taking the relative tensor product over
$R$ amounts to identifying $\beta_X$ and $\beta_Y$.  After the change of
variables \eqref{eqn:ts-beta-change}, this sets $\beta_-=0$ and identifies
$\beta_+$ with $2\beta$.  Rescaling $\beta$ by the unit $2\in k$ we get
\begin{align}\label{eqn:ts-change-of-actions}
    &PreMF^{coh}_{Z_X}(X,f)\otimes_R PreMF^{coh}_{Z_Y}(Y,g)\simeq
    PreMF^{coh}_{Z_X\times Z_Y}(W,h_-)
       \otimes_{k[[\beta_-]]} k,
\end{align}
where the right-hand side retains its $k[[\beta_+]]$-linear structure.
Here the derived zero locus of $h_-$ in $W$ is identified with
$X_0\times Y_0$, and its inclusion in $W$ is $\ell$.

We now apply the supported form of
\cite[Corollary~3.2.4]{Preygel:2011} to the LG model $(W,h_-)$.  Its bar
construction gives an equivalence
\begin{equation}\label{eqn:ts-specialize-beta-minus}
    \ell_*:
    PreMF^{coh}_{Z_X\times Z_Y}(W,h_-)
       \otimes_{k[[\beta_-]]} k
    \xrightarrow{\ \simeq\ }
    D^bCoh_{Z_X\times Z_Y}(W).
\end{equation}
Fully faithfulness follows
from the triangle relating $\ell^*\ell_*$ to the operator $\beta_-$, and
essential surjectivity follows from the finite filtration by powers of the
ideal of the derived zero locus.  The remaining $\beta_+$-action on the
target of \eqref{eqn:ts-specialize-beta-minus} is precisely the action
defining
$PreMF^{coh}_{Z_X\times Z_Y}(X\times Y,h_+)$.

From \eqref{exterior tensor} and \eqref{eqn:ts-specialize-beta-minus}, the functor in \eqref{eqn:ts-premf} is  $\ell_*(-\boxtimes-)$.  As in the proof of
\cite[Theorem~4.1.3]{Preygel:2011}, consider the augmented simplicial
derived stack
\[
    \mathcal X_{-1}=W,
    \qquad
    \mathcal X_n=X_0\times\B^{n-1}\times Y_0\quad(n\geq 0).
\]
The face maps add the corresponding null-homotopies, the degeneracy maps
insert the identity loop, and the augmentation adds the null-homotopies of
$f$ and $g$.  Applying coherent complexes with support and pushforward to
$\mathcal X_\bullet\to W$ gives the simplicial bar construction computing
the relative tensor product over $D^bCoh(\B)$.  Its augmentation is
$\ell_*(-\boxtimes-)$.  Since the augmented diagram is $\B$-equivariant,
this functor is $R$-linear.  Combining
\eqref{eqn:ts-change-of-actions} and
\eqref{eqn:ts-specialize-beta-minus} proves
\eqref{eqn:ts-premf}.

Finally, extend scalars from $R$ to $K$.  Associativity and base change for
relative tensor products give
\begin{align}
    &\bigl(PreMF^{coh}_{Z_X}(X,f)\otimes_R
       PreMF^{coh}_{Z_Y}(Y,g)\bigr)\otimes_R K \simeq
    MF^{coh}_{Z_X}(X,f)\otimes_K MF^{coh}_{Z_Y}(Y,g),
\end{align}
while the right-hand side of \eqref{eqn:ts-premf}, after the same scalar
extension, is
$MF^{coh}_{Z_X\times Z_Y}(X\times Y,f\boxplus g)$.  This proves
\eqref{ts-cpt}.  Taking Ind-completions, and observing that taking Ind-completion
commutes with these tensor products, proves the second equivalence.    This completes the proof.
\end{proof}

\section{Duality for coherent matrix factorizations}\label{sec:duality}
In this section, we prove duality theorems for coherent matrix factorization categories.  Throughout this section, fix the following.
\begin{notat}
    Let $(X,f)$ be a LG model where $X$ is a derived $\DM$ stack as in Notation \ref{notat} and let $Z$ be a closed sub-stack of $X$. For a ring object $\sR$ in an $\infty$-category $\sC$ the category of  $\sR$-module objects in $\sC$ is denoted by $Mod_{\sR}(\sC).$ Let
    $RHom_C(-,-)$ denotes the 0-th homotopy group of the mapping space between two objects in the $\infty$-category $C$, and $R\sHom_{\IndCoh(X_0)}(-,-)$ denotes the inner hom in $\IndCoh(X_0)$ \cite[Notation 2.5.5]{Preygel:2011}.
\end{notat}
\begin{remk}\label{remk:dualizable}
    By \defref{defn:coh factorizations}, $MF^{coh, \infty}_{Z}(X,f)$ is a compactly generated category. 
    Since a compactly generated category is dualizable \cite[Theorem D.7.0.7]{sag}, $MF^{coh, \infty}_{Z}(X,f)$ is a dualizable $\infty$-category. 
\end{remk}

\begin{defn}\label{rmk:pp}
    Let $\sC$ and $\sD$ be two symmetric monoidal $\IndCoh(\B)$-module categories. Let $F:\sC \times \sD \to \IndCoh(\B)$ be a bifunctor \cite[Definition. 2.2.5.3]{HA}.
 $F$ is said to be a perfect pairing if the induced functor
\[\sC \to \Fun^L_{\IndCoh(\B)}(\sD, \IndCoh(\B))\]
is an equivalence \cite[pg. 27]{Preygel:2011}, \cite[Lemma 7.2.4.6]{HA}.
\end{defn}

Now, we recall some  facts about Grothendieck duality from \cite[Section 4.4]{gaitsgory} and \cite[Section 3.1.1, \& Appendix B]{hlp-theta-strata}.
Let $X\xrightarrow{p} \Spec k$ be an almost finitely presented, quasi-smooth derived $\DM$ stack with affine stabilizers. Then there exists a dualizing complex $\omega_X := p^!(k) \in \IndCoh(X),$ \cite[Section 5]{indcoherentsheaves} where $p^!:\IndCoh(\Spec k) \to \IndCoh(X)$ be the upper shriek pullback functor . Then  $RHom_{\IndCoh(X)}(-,-)$ defines a functor
\begin{equation}\label{eqn:gd}
    \D(-):=RHom_{\IndCoh(X)}(-,\omega_X): \IndCoh(X) \to \Qcoh(X)^{op}
\end{equation}
which restricts to an equivalence
\begin{equation}\label{eqn:gdcoh}
    \D(-):D^bCoh(X) \to D^bCoh(X)^{op},
\end{equation}
\cite[Section 3.1.1]{hlp-theta-strata}, \cite[Section 4.4]{gaitsgory}.
Since $X$ is quasi-smooth $\omega_X \in Perf(X)$ \cite[Cor. 2.2.7]{singular-support}.
Let $(X,f)$ be an LG model and $X_0 \xrightarrow{i} X$ be the derived zero locus of $f$. Since $i$ is quasi-smooth $i^!(-) = i^*(-) \otimes \omega_i$, where $\omega_i$ is relative dualizing complex $\omega_i = \det(\L_i)$ \cite[Lemma B.0.2]{hlp-theta-strata}. Therefore, the dualizing complex of $X_0$, $\omega_{X_0}= i^!\omega_X$ is a perfect complex.

\subsection{Realization of structure sheaf of \texorpdfstring{$X$}{X} as a factorization of 0}\label{sec:o_x as facto}
Consider the following derived pullback square. 
\begin{equation}\label{diag:d5}
    \begin{tikzcd}[cramped]
	X && \\
	& {X_{00}} & X \\
	& 0 & {\A^1}
	\arrow["{i_0'}", dashed, from=1-1, to=2-2]
	\arrow["id", bend left, from=1-1, to=2-3]
	\arrow["0"', bend right, from=1-1, to=3-2]
	\arrow["{i_0}", from=2-2, to=2-3]
	\arrow[from=2-2, to=3-2]
	\arrow["0", from=2-3, to=3-3]
	\arrow[from=3-2, to=3-3]
\end{tikzcd}
\end{equation}
By \eqref{diag:d5} we have $\cO_{X_{00}} \simeq \cO_X \otimes_{\A^1}^{L} 0 \simeq \cO_X \oplus \cO_X[1]$. Therefore, $\Qcoh(X_{00}) \simeq Mod_{\cO_X \oplus \cO_X[1]}(\Qcoh(X))$ \cite[Cor. 3.3.2]{Preygel:2011}. When $X$ is a classical scheme or stack we can realize any complex of quasi-coherent sheaves on $X_{00}$ as a co-chain complex $(E^{\bullet},d)$ of $\cO_X$-modules with a degree $-1$ chain map $h:E^{\bullet} \to E^{\bullet}$ such that $h^2=0$ and $[d,h]=0$. For more details see \cite[Remark 1.22]{pippi_HS} and \cite[Remark 2.32]{MR3877165}.
From the pullback square \eqref{diag:d5} we get a splitting $i_0':X\to X_{00}$ such that $i_0\circ i_0' = id_X$. Thus 
$${i_0}_*:\IndCoh(X_0) \to \IndCoh(X)$$ 
is essentially surjective and $i_0$ being quasi-smooth and proper ${i_0}_*$ preserves compact objects. 
Hence there exists $\sF_0 \in D^bCoh(X_{00})$ such that ${i_0}_*\sF_0 = \cO_X$.
This $\sF_0$ is the realization of $\cO_X$ as a factorization of 0, that is, as an object in $MF^{coh}(X,0).$

Now, consider the following cartesian square.
\begin{equation}\label{diag:d1}
\begin{tikzcd}[cramped]\
	{X_{00}} && {(X\times X)_0} \\
	X && {X\times X}
	\arrow["\Delta_{00}", from=1-1, to=1-3]
	\arrow["{i_0}"{description}, from=1-1, to=2-1]
	\arrow["k", from=1-3, to=2-3]
	\arrow["{\overline{\Delta}}"{description}, dashed, from=2-1, to=1-3]
	\arrow["\Delta"{description}, from=2-1, to=2-3]
\end{tikzcd}
\end{equation}
Observe that $\Delta_{00} = \ov\Delta \circ {i_0} $. Since $X$ is separated, $\Delta$ is proper. So by base change, $\Delta_{00}$ is also proper. As $(X\times X)_0$ is quasi-compact $\Delta_{00}$ has finite cohomological dimension \cite[Cor. 1.4.5]{gaitsgory}. By base change \cite[Remark 1.11 (ii)]{kth-gth-khan} applied to \eqref{diag:d1}  we have $\Delta_{00*}i_0^* = k^*\Delta_*.$
Hence, we have a $k((\beta))$-linear $\infty$-functor 
\[{\Delta_{00}}_* : MF^{coh}_{Z}(X,0) \to MF^{coh}_{Z\times Z}(X\times X,f \boxplus -f).\]

Consider the following functor:
\begin{equation}\label{eqn:d2}
    \ol\Delta_* : \IndCoh(X) \to \IndCoh((X\times X)_0).
\end{equation}
Since $k \circ \ol\Delta = \Delta$ is proper, and $k$ is separated, $\ol\Delta$ is also proper. Since $(X\times X)_0$ is quasi-compact $\ol\Delta$ has finite cohomological dimension \cite[Cor. 1.4.5]{gaitsgory}. Therefore, \eqref{eqn:d2} restricts to compact objects
\begin{equation}\label{eqn:d3}
    \ol\Delta_* : D^bCoh(X) \to D^bCoh((X\times X)_0).
\end{equation}
 By the commutativity of the diagram \eqref{diag:d1} we get 
\begin{equation}\label{eqn:factorization of ox}
    {\Delta_{00}}_* \sF_0 = \ol\Delta_* {i_0}_*\sF_0 = \ol\Delta_*\cO_X.
\end{equation}

\begin{lem}\label{lem:d1}
    Let $X$ be a derived $\DM$ stack and $Z$ be a closed sub-stack of $X$. Then $\IndCoh(X)_Z$ is generated under fiber and cofiber by the full subcategory consisting of pushforwards of objects of $\IndCoh(Z)$.
\end{lem}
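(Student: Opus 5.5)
Write $i: Z \hookrightarrow X$ for the closed immersion and $U = X \setminus Z$ for the open complement, with $j: U \hookrightarrow X$. The plan is to reduce to the compact part and then use a dévissage by thickenings of $Z$.

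\textbf{Reduction to compact objects.} First I would check that $\IndCoh(X)_Z$ is compactly generated by $D^bCoh_Z(X)$. It is the kernel of $j^*: \IndCoh(X) \to \IndCoh(U)$, and this kernel is compactly generated by coherent objects set-theoretically supported on $Z$; this is a standard fact from Gaitsgory's theory of IndCoh. It then suffices to show that every $\sF \in D^bCoh_Z(X)$ lies in the thick subcategory generated by $i_*D^bCoh(Z)$. The statement for $\IndCoh$ follows by passing to filtered colimits, so "generated" is read in the Ind/colimit sense at the large level.

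\textbf{Dévissage on cohomology sheaves.} Take $\sF \in D^bCoh_Z(X)$. Since $\sF$ is bounded, the truncation triangles $\tau_{\le n}\sF \to \sF \to \tau_{>n}\sF$ reduce us, after finitely many fiber/cofiber steps, to a single cohomology sheaf $\mathcal{H}^n(\sF)[-n]$. Such a sheaf is a coherent $\pi_0\cO_X$-module supported on $Z$, and it is pushed forward from $X_{cl}$; that is, it is a module over $\cO_X \to \pi_0\cO_X$. Because it is coherent and set-theoretically supported on $Z$, it is annihilated by $\mathcal{I}^N$ for some $N$, where $\mathcal{I}$ is the ideal of $Z_{cl}$ in $X_{cl}$. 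This step uses quasi-compactness, finite type, and noetherianity of $X_{cl}$.

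\textbf{Filtration by powers of $\mathcal{I}$.} Filter $\mathcal{M} = \mathcal{H}^n(\sF)$ by the submodules $\mathcal{I}^k\mathcal{M}$. The successive quotients $\mathcal{I}^k\mathcal{M}/\mathcal{I}^{k+1}\mathcal{M}$ are $\cO_{Z_{cl}}$-modules, so each is of the form $(i_{cl})_*\mathcal{N}$. The step I expect to be the main obstacle is passing between $Z_{cl}$ and the possibly derived $Z$. For this I would use that $Z_{cl} \to Z$ is a closed immersion of derived stacks, so $(i_{cl})_*\mathcal{N} = i_*(\iota_*\mathcal{N})$ with $\iota: Z_{cl} \hookrightarrow Z$, and $\iota_*\mathcal{N} \in D^bCoh(Z)$. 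Thus every graded piece is a pushforward from $Z$. The finite filtration then exhibits $\mathcal{M}$ by iterated extensions, i.e. cofibers, of such pushforwards.

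\textbf{Stack-theoretic issues.} All of these constructions (truncations, the ideal $\mathcal{I}$, the $\mathcal{I}$-adic filtration) are étale local and canonical, so they descend to the DM stack $X$. The reduction to compact objects uses the finite-type, separated hypotheses and affine stabilizers of Notation \ref{notat}.
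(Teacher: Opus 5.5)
Your argument is correct and is essentially the paper's proof made explicit. The paper only cites d\'evissage (Quillen's Theorem 4 and its versions for derived stacks and stable $\infty$-categories in Khan and Wei), and that is exactly your route: reduce to $D^bCoh_Z(X)$, then to the heart, then to the finite $\mathcal{I}$-adic filtration whose graded pieces are pushed forward along $Z_{cl}\to Z\to X$.

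Your colimit reading of ``generated'' at the $\IndCoh$ level is also forced, not just convenient. On $\A^1$ with $Z=\{0\}$, $t$ acts by zero on every $i_*V$, hence nilpotently on any finite iterated extension (or retract) of such objects, whereas no power of $t$ kills $k[t,t^{-1}]/k[t]\in\IndCoh_Z(\A^1)$.
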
 

\begin{proof}
    Follows from definitions in \cite{kth-gth-khan}, \cite[Section 2.2]{devissage-algebraic-k-theory-small} and \cite[Theorem 4]{quillen}.
\end{proof}

Now, we come to the main theorem of this section.

\begin{thm}\label{thm:duality}
Let $(X,f)$ be a LG model.
Let $Z$ be a closed sub-stack of $X_0$. By Remark \ref{remk:dualizable}, $MF^{coh, \infty}_{Z}(X,f)$ is a dualizable $\infty$-category.
Then we have the following equivalence of $k((\beta))$-linear $\infty$-categories:
\[MF^{coh, \infty}_{Z}(X,f)^{\vee} \cong MF^{coh, \infty}_{Z}(X,-f).\]

\end{thm}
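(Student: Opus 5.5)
The plan is to exhibit an explicit evaluation and coevaluation pairing $MF^{coh,\infty}_Z(X,f)$ with $MF^{coh,\infty}_Z(X,-f)$ over $k((\beta))$, following Preygel's argument for \cite[Theorem 4.2.2]{Preygel:2011}, and to use the Thom-Sebastiani equivalence \thmref{thm:thom-sebastiani} to identify the tensor product.

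\textbf{Identifying the tensor product.} By \thmref{thm:thom-sebastiani} applied to $(X,f)$ and $(X,-f)$,
\[
MF^{coh,\infty}_Z(X,f)\otimes_{k((\beta))}MF^{coh,\infty}_Z(X,-f)\simeq MF^{coh,\infty}_{Z\times Z}(X\times X,f\boxplus -f).
\]
So both duality data may be written as functors into or out of this category.

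\textbf{Coevaluation.} Define $\mathrm{coev}$ as the $k((\beta))$-linear functor $k((\beta))\text{-}\Mod\to MF^{coh,\infty}_{Z\times Z}(X\times X,f\boxplus -f)$ sending $k((\beta))$ to $\ol\Delta_*\cO_X$. It is realized, via \eqref{eqn:factorization of ox}, as ${\Delta_{00}}_*\sF_0$. Here $\sF_0$ is the realization of $\cO_X$ as a factorization of $0$ from \secref{sec:o_x as facto}. The compatibility ${\Delta_{00}}_*i_0^*=k^*\Delta_*$ established there makes ${\Delta_{00}}_*$ a $k((\beta))$-linear functor $MF^{coh}_Z(X,0)\to MF^{coh}_{Z\times Z}(X\times X,f\boxplus-f)$. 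It restricts to supports $Z\times Z$ once we pass to $\Delta^{-1}(Z\times Z)=Z$, using \lemref{lem:d1}.

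\textbf{Evaluation.} Define $\mathrm{ev}$ as the composite
\[
MF^{coh,\infty}_{Z\times Z}(X\times X,f\boxplus -f)\xrightarrow{\ \ol\Delta^!\ \text{or}\ \Delta_{00}^!\ } MF^{coh,\infty}_Z(X,0)\xrightarrow{\ p_*\ }MF^{coh,\infty}(\Spec k,0)\simeq k((\beta))\text{-}\Mod.
\]
This is well defined because $Z$ is proper over the locus where the $\beta$-action is nontrivial, and $p_*$ applied to the $\B$-equivariant structure yields $k[[\beta]]$-modules. Equivalently, on compact objects, $\mathrm{ev}(\sF\boxtimes\sG)=RHom_{MF}(\D\sF,\sG)$, where $\D$ is Grothendieck duality \eqref{eqn:gd}. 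Since $\omega_{X_0}=i^!\omega_X$ is perfect, $\D$ exchanges the $\beta$-action for $f$ with that for $-f$. This gives an equivalence $MF^{coh}_Z(X,f)^{op}\simeq MF^{coh}_Z(X,-f)$ on compact objects.

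The cleaner route to the theorem is then this: since $MF^{coh,\infty}_Z(X,f)$ is compactly generated (\remref{remk:dualizable}), its dual is $\Ind\big(MF^{coh}_Z(X,f)^{op}\big)$. Composing with $\D$ identifies this with $MF^{coh,\infty}_Z(X,-f)$. The induced pairing is the perfect pairing of \defref{rmk:pp}.

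\textbf{Main obstacle.} The difficulty is showing that $\D$ is compatible with the $k[[\beta]]$-linear structures, with $\beta$ going to $\beta$ once $f$ is replaced by $-f$. Concretely, it must intertwine the $\B$-action with its inverse on $\B$; this inverse corresponds to $\beta\mapsto-\beta$, or equivalently to changing $f$ to $-f$. I would first check this at the level of $PreMF$. The approach is to prove that $\D_{X_0}$ commutes with the action \eqref{eqn:action}, using $v^!=v^*\otimes\omega_v$ together with base change in the cartesian cube. Grothendieck duality on $\IndCoh(\B)\simeq k[[\beta]]\text{-}\Mod$ is then the $k[[\beta]]$-dual composed with the antipode. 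Once that holds, base change to $k((\beta))$ and Ind-completion give the theorem. Finally, the triangle identities follow from the base change identities around \eqref{diag:d1}, namely the projection formula for $\ol\Delta$.
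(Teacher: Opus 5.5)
Your explicit duality data contain a real error: $\ol\Delta_*\cO_X$ is not the coevaluation that matches your evaluation. By the adjunction $\ol\Delta_*\dashv\ol\Delta^!$, your $\mathrm{ev}$ is the paper's pairing $RHom(\ol\Delta_*\cO_X,-)$, i.e.\ $\Gamma(X,\ol\Delta^!(-))$. Pairing it with $\mathrm{coev}=\ol\Delta_*\cO_X$ makes the zig-zag composite $M\mapsto\Delta^!(M\boxtimes\cO_X)\simeq M\otimes\omega_X^{-1}$. This already happens for smooth $X$ and $f=0$, where $\Delta^!=\Delta^*(-)\otimes\omega_X^{-1}$, and the result is an autoequivalence but not the identity. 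The unit has to be built from $\omega_X$. Since $\ol\Delta_*(-)$ is not supported on $Z\times Z$, it also needs a local-cohomology correction; compare \thmref{thm:repn of ev and id}, where the identity kernel is $\ol\Delta_*R\Gamma_{X_0}(\omega_X)$. So your closing claim, that the triangle identities follow from the projection formula, fails as stated.

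Your ``cleaner route'' does not need a coevaluation, because $(\Ind\sC)^\vee\simeq\Ind(\sC^{op})$ already supplies the duality data. In substance it is the paper's proof. The paper also identifies the pairing on compact objects with $RHom(\D\sF,\sG)$. It does so via $\ell^*\dashv\ell_*$, the base change $\ell^*\ol\Delta_*\simeq{\Delta_{X_0}}_*i^*$, the adjunction ${\Delta_{X_0}}_*\dashv\Delta_{X_0}^!$, and $\Delta_{X_0}^!(\sF\boxtimes\sG)\simeq R\sHom(\D\sF,\sG)$. It then concludes because $\D$ is an equivalence on $D^bCoh$.

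The genuine difference is how $k[[\beta]]$-linearity is obtained. You make this the main obstacle and propose to show by hand that $\D_{X_0}$ intertwines the $\IndCoh(\B)$-action with its antipode. The paper never checks this. Working with $PreMF$, it defines the pairing as $RHom^{k[[\beta]]}_{PreMF^{coh,\infty}(X^2,f\boxplus-f)}(\ol\Delta_*\cO_X,\ell_*(-\boxtimes-))$. Two facts make this pairing linear by construction:
\begin{itemize}
\item $\ell_*(-\boxtimes-)$ is $k[[\beta]]$-linear by \thmref{thm:thom-sebastiani};
\item $\ol\Delta_*\cO_X={\Delta_{00}}_*\sF_0$ is an object of the $k[[\beta]]$-linear category attached to $f\boxplus-f$.
\end{itemize}
Hence the induced functor $PreMF^{coh,\infty}(X,f)\to\Fun^L_{k[[\beta]]}(PreMF^{coh,\infty}(X,-f),k[[\beta]]\text{-}\Mod)$ is also linear. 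Whether it is an equivalence can then be tested on the underlying $k$-linear categories, where its adjoint restricted to compact objects is $\D$. In this way the sign change $f\mapsto -f$ is encoded in the kernel living on the zero locus of $f\boxplus-f$.

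Your direct route can be made to work. However, it requires the compatibility of $\D$ with the action as coherent data, namely an equivalence of $\IndCoh(\B)$-module categories twisted by the antipode. Objectwise isomorphisms obtained from $v^!=v^*\otimes\omega_v$ and base change are not enough. The paper's construction gets this coherence for free, at the cost of leaning on the Thom--Sebastiani theorem for the linearity of the pairing.
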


\begin{proof}
    The idea of the proof follows from \cite[Theorem 4.2.2]{Preygel:2011}.
    By \lemref{lem:d1}, $\IndCoh(X)_Z$ is generated 
    by pushforwards of objects of $\IndCoh(Z)$, it is therefore enough to prove the theorem without support 
    conditions. Since $MF^{coh,\infty}(X,f) = PreMF^{coh,\infty}(X,f) \otimes_{k[[\beta]]}k((\beta))$, it is enough to prove the theorem for $PreMF^{coh, \infty}(X,f)$ instead of $MF^{coh,\infty}(X,f)$.
    We will define a colimit preserving $k[[\beta]]$-linear $\infty$-functor:
    \begin{align*}
        \langle - , - \rangle^{coh, \infty} : PreMF^{coh, \infty}(X,f) \otimes_{k[[\beta]]} PreMF^{coh, \infty}(X,-f) &\to k[[\beta]]\mbox{-}\Mod 
    \end{align*} 
    and show that it is a perfect pairing  (\defref{rmk:pp}).
    Now, by Thom-Sebastiani type theorem, Theorem \ref{thm:thom-sebastiani}, we have:
    \begin{align*}
        PreMF^{coh, \infty}(X,f) \otimes_{k[[\beta]]} PreMF^{coh, \infty}(X,-f) &\cong PreMF^{coh, \infty}(X \times X, f \boxplus -f).
    \end{align*}
    Consider the $\infty$-functor:
    \begin{align*}
        \langle - , - \rangle^{coh, \infty} &: PreMF^{coh, \infty}(X,f) \otimes_{k[[\beta]]} PreMF^{coh, \infty}(X,-f) \to k[[\beta]]\mbox{-}\Mod \\
        \langle - , - \rangle^{coh, \infty} &:= RHom^{k[[\beta]]}_{PreMF^{coh, \infty}(X^2, f\boxplus -f)}(\overline{\Delta}_*\sO_{X}, \ell_*(-\boxtimes -)).
    \end{align*}
    Now, to show that $\langle -, - \rangle^{coh, \infty}$ is a perfect pairing, it is enough to consider it as a $k$-linear functor. 
    As a $k$-linear category 
    \begin{equation*}
         PreMF^{coh, \infty}(X^2, f\boxplus -f) \simeq \IndCoh((X^2)_0)
    \end{equation*}
   and 
    $$PreMF^{coh, \infty}(X, f) \simeq \IndCoh(X_0).$$
        For a fixed $\sF \in PreMF^{coh, \infty}(X,f)$, consider the $k$-linear functor 
    $$PreMF^{coh, \infty}(X,f) \to \Fun(PreMF^{coh, \infty}(X,-f), k\mbox{-}\Mod)$$ defined by 
    \begin{equation}\label{functor 1}
        \sF \mapsto RHom^{k}_{PreMF^{coh, \infty}(X^2, f\boxplus -f)}(\overline{\Delta}_*\sO_{X}, \ell_*(\sF \boxtimes -)) =^1 RHom^{k}_{\IndCoh((X^2)_0)}(\overline{\Delta}_*\sO_{X}, \ell_*(\sF \boxtimes -)).
    \end{equation}
   
    Here $=^1$ follows from the fact that as a $k$-linear category, $PreMF^{coh, \infty}(X^2, f\boxplus -f)$ is  $\IndCoh((X^2)_0)$.
    
    The functor \eqref{functor 1} is colimit preserving, as $\ol\Delta_*\sO_X$ is a compact object in $\IndCoh((X^2)_0)$, there exists a left adjoint to it. We denote it by
    $$F_{\ell} : \Fun(PreMF^{coh, \infty}(X,-f), k\mbox{-}\Mod) \simeq^{1} PreMF^{coh, \infty}(X,-f)^{op} \to PreMF^{coh, \infty}(X , f ).$$ 
    Here $\simeq^1$ follows from \cite[Lemma 4.2.1]{Preygel:2011}.
    
    It is enough to show that $F_{\ell}$ is an equivalence. 
    Now, since both domain and co-domain of the functor $F_{\ell}$ are compactly generated $\infty$-categories and $F_{\ell}$ preserves compact objects, as a $k$-linear functor $F_{\ell}$ is $\Ind$ of the functor $F_{\ell}^c$ induced from $F_{\ell}$ by restricting to compact objects,
    \[F_{\ell}^c : PreMF^{coh}(X,-f)^{op} \to PreMF^{coh}(X,f).\]
    By the following series of equivalences in \eqref{eqn:duality equiv} we can identify $F_{\ell}^c$ with the Grothendieck duality $\D(-) $ \eqref{eqn:gd} as $k$-linear functors. 
    Consider the following cartesian square:
\begin{equation}\label{eqn:duality base change}
\begin{tikzcd}
	{X_0} & X \\
	{(X_0)^2} & {(X^2)_0}
	\arrow["i", from=1-1, to=1-2]
	\arrow["{\Delta_{X_0}}"', from=1-1, to=2-1]
	\arrow["{\ol\Delta}", from=1-2, to=2-2]
	\arrow["\ell"', from=2-1, to=2-2]
\end{tikzcd}
\end{equation}
Let $\sF \in PreMF^{coh}(X,-f)$ and $ \sG \in PreMF^{coh}(X,f).$ Now, we have following equivalences:
    \begin{equation}\label{eqn:duality equiv}
        \begin{split}
            RHom_{PreMF^{coh}(X^2,f\boxplus -f)}(\ol\Delta_*\cO_X, \ell_*(\sF \boxtimes \sG))&\simeq RHom_{D^bCoh((X^2)_0)}(\ol\Delta_*\cO_X, \ell_*(\sF \boxtimes \sG))\\
            & \simeq^1 RHom_{D^bCoh((X_0)^2)}(\ell^*\ol\Delta_*\cO_X, \sF \boxtimes \sG)\\
            & \simeq^2 RHom_{D^bCoh((X_0)^2)}({\Delta_{X_0}}_*i^*\cO_X, \sF \boxtimes \sG)\\
            & \simeq^3 RHom_{D^bCoh((X_0)^2)}({\Delta_{X_0}}_*\cO_{X_0}, \sF \boxtimes \sG)\\
            & \simeq^4 RHom_{\IndCoh(X_0)}(\cO_{X_0}, {\Delta_{X_0}^!}(\sF \boxtimes \sG))\\
            & \simeq^5 RHom_{\IndCoh(X_0)}(\cO_{X_0}, R\sHom_{D^bCoh(X_0)}(\D\sF,\sG))\\ 
            & \simeq RHom_{D^bCoh(X_0)}(\D\sF,\sG)).
        \end{split}
    \end{equation}
    Here $\simeq^1$ follows from adjoint pair $\ell^* :D^bCoh((X_0)^2) \leftrightarrows D^bCoh((X^2)_0) : \ell_*$, $\simeq^2$ follows from  base change by \eqref{eqn:duality base change}, $\simeq^3$ follows from the fact that $i^*\cO_X = \cO_{X_0}$, $\simeq^4$ follows from the adjoint pair $\Delta_* : \IndCoh(X_0) \leftrightarrows \IndCoh((X_0)^2) : \Delta^!$, $\simeq^5$ follows from \cite[Proposition 4.4.4]{gaitsgory}. Since $\D$ is an equivalence \eqref{eqn:gdcoh}
    this completes the proof of the Theorem \ref{thm:duality}.
\end{proof}

\subsection{Assumption}
\begin{assumption}\label{assumption}
    For the rest of the article, we assume that our LG model $(X,f)$ has the following property:
    Consider the following homotopy cartesian squares:
\[\begin{tikzcd}[cramped]
	{X_0} & X & {X^{\times}} \\
	0 & {\A^1} & {\mathbb{G}_m}
	\arrow[from=1-1, to=1-2]
	\arrow[from=1-1, to=2-1]
	\arrow["f", from=1-2, to=2-2]
	\arrow[from=1-3, to=1-2]
	\arrow["{f^{\times}}", from=1-3, to=2-3]
	\arrow[from=2-1, to=2-2]
	\arrow[from=2-3, to=2-2]
\end{tikzcd}\]
    We assume the morphism $f^{\times}:X^{\times}\to \mathbb{G}_m$ is smooth. 
\end{assumption}

As a consequence of Assumption \ref{assumption}, we have the following.

\begin{lem}\label{lem:supp=not supp}
    Let $(X,f)$ be a LG model satisfying Assumption \ref{assumption}. Then,
    \begin{equation}
        MF^{coh}_{(X_0 \times X_0)}(X\times X, f\boxplus-f) \simeq MF^{coh}(X\times X, f\boxplus-f).
    \end{equation}
\end{lem}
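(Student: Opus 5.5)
Our plan is to show that every object of $MF^{coh}(X\times X, f\boxplus -f)$ is, up to idempotent completion, already supported on $X_0\times X_0 \subset (X\times X)_0$. Equivalently, the Verdier quotient of $MF^{coh}$ by the supported subcategory vanishes. By the lemma identifying $MF^{coh}$ with the idempotent completion of $D^bCoh(W_0)/\langle k^*D^bCoh(X\times X)\rangle$, where $W_0=(X\times X)_0$, this quotient is the coherent matrix factorization category of the restriction of $f\boxplus -f$ to the open complement $U := (X\times X)\setminus (X_0\times X_0)$. This uses the localization sequence $D^bCoh_{X_0\times X_0}(W_0)\to D^bCoh(W_0)\to D^bCoh(W_0\cap U)$, which is compatible with the $\B$-action and with $k^*$, so it survives base change to $k((\beta))$. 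It therefore suffices to prove $MF^{coh}(U, (f\boxplus -f)|_U)\simeq 0$.

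\textbf{Step 1: reduction to the smooth locus.} Cover $U$ by the opens $U_1=X^{\times}\times X$ and $U_2=X\times X^{\times}$, where $f\neq 0$ or $-f\neq 0$ respectively. By Zariski (étale) descent for $\IndCoh$ of zero loci, compatible with $\B$-actions and hence with $\otimes_{k[[\beta]]}k((\beta))$, it suffices to show that $MF^{coh}$ vanishes on $U_1$, on $U_2$, and on $U_1\cap U_2$. By symmetry we treat $U_1$.

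\textbf{Step 2: a Thom--Sebastiani product with a smooth factor.} On $U_1$ the potential is $f^{\times}\boxplus(-f)$. By Theorem \ref{thm:thom-sebastiani} (with $Z$ the full zero loci) we obtain $MF^{coh}(U_1)\simeq MF^{coh}(X^{\times},f^{\times})\otimes_{k((\beta))}MF^{coh}(X,-f)$. It is therefore enough to show $MF^{coh}(X^{\times},f^{\times})=0$. By Assumption \ref{assumption}, $f^{\times}$ is smooth, and in particular its zero fibre is empty because $0\notin\mathbb{G}_m$. This gives vanishing trivially on $U_1$, $U_2$ and $U_1\cap U_2$. The same reasoning applied directly shows $(X\times X)_0\cap U$ is contained in the locus where at least one coordinate lies in $X^{\times}$.

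\textbf{Step 3: the main obstacle.} Step 2 is too naive: the zero locus of $f\boxplus -f$ on $U_1$ is not empty; it is $\{f(x)=f(y)\neq0\}$. The real task is to show $MF^{coh}(U_1, f^\times\boxplus -f)=0$. The plan is to use smoothness of $f^{\times}$. Near a point where $f(x)=f(y)=c\neq 0$, the function $f\boxplus -f$ is smooth in the $x$-direction, so the map $(f\boxplus -f)|_{U_1}$ is smooth (and in particular flat) near its zero locus. Hence $W_0\cap U_1$ is quasi-smooth, with $\L$ of the zero-locus inclusion free of rank one on a vanishing differential. One would then show that $\beta$ acts nilpotently, i.e.\ that every object of $D^bCoh(W_0\cap U_1)$ lies in $\langle k^*D^bCoh(U_1)\rangle$. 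Concretely, since $W_0\cap U_1\to U_1$ is a smooth hypersurface inclusion for a smooth function, $k_*$ and $k^*$ identify $D^bCoh(W_0\cap U_1)$ up to retracts with pullbacks, and $\beta$, being the class of $d(f\boxplus -f)\neq0$ in $\L$, is null on $k^*$ by Lemma \ref{lem:beta torsion}. Verifying that every coherent complex on the zero locus is $\beta$-torsion when the function is smooth is the step I expect to need the most care. I would try to deduce it locally from the Koszul/bar description $D^bCoh(W_0)\otimes_{D^bCoh(\B)}k\simeq D^bCoh$ of the ambient, as in (\ref{eqn:ts-specialize-beta-minus}), combined with smoothness making $\cO_{W_0}$ perfect over the ambient.
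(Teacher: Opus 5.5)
Your opening reduction matches the paper's proof: a localization sequence (the paper invokes an analogue of Efimov--Positselski, Lemma 3.1) reduces the claim to the vanishing of the term coming from the open complement. With $V=(X\times X)_0\setminus(X_0\times X_0)$, that term is $D^b_{Sing}(V/U)$. The proposal never proves this term vanishes, and neither of your attempts gets there.

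\textbf{Step 2 is invalid, as you concede.} Theorem~\ref{thm:thom-sebastiani} only computes the category supported on the product of the zero loci, here $\emptyset\times X_0$. Passing from the supported to the unsupported category is exactly what the lemma asserts, so the theorem cannot be used for it.

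\textbf{Step 3 leaves the decisive claim unproved.} You state that every object of $D^bCoh(W_0\cap U_1)$ is $\beta$-torsion, then flag it as unverified. The ingredient you propose, smoothness making $\cO_{W_0}$ perfect over the ambient, carries no information. By the Koszul complex, $\cO_{W_0}$ is perfect over the ambient for the derived zero locus of any function.

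\textbf{The missing idea is a one-line identification.} $V$ is the preimage of $\mathbb{G}_m$ under $(x,y)\mapsto f(x)$, so $V\simeq X^{\times}\times_{\mathbb{G}_m}X^{\times}$. Since $f^{\times}$ is smooth, $X^{\times}$ is smooth over $k$, and so is this fibre product. So the zero locus on the complement is itself smooth, not merely quasi-smooth as you wrote. You had the ingredient when you observed that $f\boxplus -f$ is smooth near its zero locus; note that this uses $y\in X^{\times}$, not only smoothness in the $x$-direction.

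Consequently $D^bCoh(V)=Perf(V)$, and perfect complexes are $\beta$-torsion. Étale locally they lie in the thick subcategory generated by $\cO_V=k^*\cO_U$, on which $\beta$ vanishes by Lemma~\ref{lem:beta torsion}. Finite cohomological dimension of $V$ then upgrades local vanishing to global nilpotence of $\beta$. Equivalently $D^b_{Sing}(V/U)=0$, which the paper deduces directly from $V$ being smooth and $V\hookrightarrow U$ affine.

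Once you have this, neither the Zariski descent in Step 1 nor the Thom--Sebastiani detour in Step 2 is needed. The descent step would also have required a separate justification that $\otimes_{k[[\beta]]}k((\beta))$ commutes with the relevant limits.
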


\begin{proof}
    Consider the following Cartesian squares:
    \[\begin{tikzcd}[cramped]
	{X_0\times X_0} & {(X\times X)_0} & {X\times X} \\
	0 & {\A^1} & {\A^1\times \A^1} \\
	& 0 & {\A^1}
	\arrow[hook, from=1-1, to=1-2]
	\arrow[from=1-1, to=2-1]
	\arrow[hook, from=1-2, to=1-3]
	\arrow["q"', from=1-2, to=2-2]
	\arrow["{f\times f}", from=1-3, to=2-3]
	\arrow[from=2-1, to=2-2]
	\arrow["\Delta"', from=2-2, to=2-3]
	\arrow[from=2-2, to=3-2]
	\arrow["{(t_1,t_2)\mapsto t_2 -t_1}", from=2-3, to=3-3]
	\arrow[from=3-2, to=3-3]
\end{tikzcd}\]
Therefore, we have 
\[(X\times X)_0 = (X\times X) \times_{\A^1\times \A^1}\A^1 = X\times_{\A^1}X,\]
and the map $q$ is given by $X\times_{\A^1}X \to X\xrightarrow{f}\A^1$.
Let $Y=(X\times X)_0$ where $\widetilde{f}$ denotes $f\boxplus-f$. Then $(Y,q)$ is a LG model with derived zero locus $X_0\times X_0$.
Let $Y^{\times}$ be defined by the following Cartesian square,
\[\begin{tikzcd}[cramped]
	{(X\times X)_0} & {Y^{\times}} \\
	{\A^1} & {\mathbb{G}_m}
	\arrow["q"', from=1-1, to=2-1]
	\arrow[from=1-2, to=1-1]
	\arrow[from=1-2, to=2-2]
	\arrow[from=2-2, to=2-1]
\end{tikzcd}\]
Note that $Y^{\times} \simeq X^{\times} \times_{\mathbb{G}_m} X^{\times}$. Since $X^{\times}$ is smooth by our assumption, $Y^{\times} = (X\times X)_0-(X_0\times X_0)$ is also smooth. 
Now, in \cite[Lemma 3.1]{coherent-analogues} put $X\times X$ in place of $X$ and put $Z=(X\times X)_0, ~T=X_0\times X_0,~V=(X\times X)_0-(X_0\times X_0),$ and $U=(X\times X)-(X_0\times X_0)$. Then a analogous result to \cite[Lemma 3.1]{coherent-analogues} for derived $\DM$ stacks gives the following triangle:
\[MF^{coh}_{(X_0 \times X_0)}(X\times X, f\boxplus-f)\to MF^{coh}(X\times X, f\boxplus-f) \to D^b_{Sing}(V/U).\]
Now note that the inclusion map $V\hookrightarrow U$ is affine and $V$ is smooth, thus $D^b_{Sing}(V/U) =0.$
Therefore, we have 
\[MF^{coh}_{(X_0 \times X_0)}(X\times X, f\boxplus-f) \simeq MF^{coh}(X\times X, f\boxplus-f).\] This completes the proof of \lemref{lem:supp=not supp}.
\end{proof}

\subsection{Representative of evaluation and identity functors }
    Let $(X,f)$ and $(Y,g)$ be LG models.
   In this section, we identify evaluation and identity endomorphisms of the category $MF^{coh, \infty}(X,f)$ with objects in $MF^{coh, \infty}(X \times Y, -f \boxplus g)$. We need these expressions to prove the HKR theorem in the next section.
\begin{lem}\label{lem:functor-category}
	Let $(X,f)$ and $(Y,g)$ be LG models.
    Then we have the following equivalence of $k((\beta))$-linear $\infty$-categories:
    \begin{equation}\label{eqn:product}
        Fun^L_{k((\beta))}(MF^{coh, \infty}(X,f), MF^{coh, \infty}(Y,g)) \cong MF^{coh, \infty}(X \times Y, -f \boxplus g).
    \end{equation}
\end{lem}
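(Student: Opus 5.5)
The plan is to combine the duality theorem with the Thom--Sebastiani theorem, using the standard fact that for dualizable objects in a symmetric monoidal $\infty$-category of presentable $k((\beta))$-linear categories, internal functor categories are computed by tensoring with the dual.

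\textbf{Step 1 (dualizability).} By Remark \ref{remk:dualizable}, $MF^{coh,\infty}(X,f)$ is compactly generated, hence dualizable in $Pr^L_{k((\beta))}$. The category of colimit-preserving $k((\beta))$-linear functors to any $\sD$ is then identified with $MF^{coh,\infty}(X,f)^{\vee}\otimes_{k((\beta))}\sD$ (cf.\ \cite[Lemma 4.2.1]{Preygel:2011} and \cite[Section 4.6]{HA}). Taking $\sD=MF^{coh,\infty}(Y,g)$ gives
\[
Fun^L_{k((\beta))}\bigl(MF^{coh,\infty}(X,f), MF^{coh,\infty}(Y,g)\bigr)\simeq MF^{coh,\infty}(X,f)^{\vee}\otimes_{k((\beta))} MF^{coh,\infty}(Y,g).
\]

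\textbf{Step 2 (duality).} Apply Theorem \ref{thm:duality} with $Z=X_0$ to replace $MF^{coh,\infty}(X,f)^{\vee}$ by $MF^{coh,\infty}(X,-f)$. The derived zero locus of $-f$ agrees with $X_0$, since the automorphism $t\mapsto -t$ of $\A^1$ fixes $0$.

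\textbf{Step 3 (Thom--Sebastiani).} Apply the big form of Theorem \ref{thm:thom-sebastiani} with $Z_X=X_0$ and $Z_Y=Y_0$. This gives
\[
MF^{coh,\infty}(X,-f)\otimes_{k((\beta))}MF^{coh,\infty}(Y,g)\simeq MF^{coh,\infty}_{X_0\times Y_0}(X\times Y,-f\boxplus g),
\]
realized by the kernel functor $\ell_*(-\boxtimes-)$.

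\textbf{Step 4 (removing support).} This is the main obstacle: the right-hand side of Step 3 carries the support condition $X_0\times Y_0$, whereas \eqref{eqn:product} has none. I would argue as in Lemma \ref{lem:supp=not supp}. The localization triangle adapted from \cite[Lemma 3.1]{coherent-analogues} has as its cofiber a relative singularity category of $V=(X\times Y)_0\setminus(X_0\times Y_0)$ inside $U=(X\times Y)\setminus(X_0\times Y_0)$. On $V$, Assumption \ref{assumption} makes $f$ and $g$ smooth to $\mathbb{G}_m$ (applying the assumption to $(Y,g)$ as well), so $V\simeq X^{\times}\times_{\mathbb{G}_m}Y^{\times}$ is smooth and this cofiber vanishes. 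Passing to Ind-completions then identifies the supported and unsupported big categories.

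If $Y$ is not assumed to satisfy the Assumption, I would instead interpret the right-hand side of \eqref{eqn:product} with the support understood, or note that only the case $Y=X$, $g=f$ is used later, where Lemma \ref{lem:supp=not supp} applies directly. Finally, I would check that the composite equivalence sends a kernel $\sK$ to the integral transform $\sF\mapsto p_{Y*}(\sK\otimes p_X^*\sF)$ on zero loci. This $k((\beta))$-linearity is automatic, because every equivalence used is $k((\beta))$-linear.
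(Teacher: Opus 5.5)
Your proposal is correct and follows the paper's proof essentially step for step: dualizability and Theorem~\ref{thm:duality} rewrite $Fun^L_{k((\beta))}$ as $MF^{coh,\infty}(X,-f)\otimes_{k((\beta))}MF^{coh,\infty}(Y,g)$, Theorem~\ref{thm:thom-sebastiani} identifies this with the category supported on $X_0\times Y_0$, and the localization argument of Lemma~\ref{lem:supp=not supp} removes the support. You also note explicitly that this last step needs Assumption~\ref{assumption} for both $(X,f)$ and $(Y,g)$, so that $X^{\times}\times_{\mathbb{G}_m}Y^{\times}$ is smooth; the paper uses this implicitly when it applies Lemma~\ref{lem:supp=not supp} (stated only for $X\times X$) ``to $(X,f)$ and $(Y,g)$''.
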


\begin{proof}
	By Theorem \ref{thm:duality} we have $MF^{coh, \infty}(X,f)^{\vee} \cong MF^{coh, \infty}(X,-f)$.
	Now, in the category $Pr^{L,w}_{k((\beta)),\vee}$ we have the adjoint pair 
	$(Fun^L_{k((\beta))}(-,-), -\otimes_{k((\beta))}-)$ (see Notation \ref{notat:1}). Using that, we have the following 
	equivalences:
	\begin{align*}
		Fun^L_{k((\beta))}(MF^{coh, \infty}(X,f), MF^{coh, \infty}(Y,g)) &\cong Fun^L_{k((\beta))}(k((\beta)),MF^{coh, \infty}(X,-f) \otimes_{k((\beta))} MF^{coh, \infty}(Y,g))\\
		&\cong^1 Fun^L_{k((\beta))}(k((\beta)),MF^{coh, \infty}_{X_0\times Y_0}(X \times Y, -f \boxplus g))\\
        &\cong MF^{coh, \infty}_{X_0\times Y_0}(X \times Y, -f \boxplus g))\\
		&\cong^2 MF^{coh, \infty}(X \times Y, -f \boxplus g).
	\end{align*}
	The equivalence $\cong^1 $ follows from   \thmref{thm:thom-sebastiani}, and the equivalence $\cong^2$ follows from \lemref{lem:supp=not supp} applied to $(X,f)$ and $(Y,g)$.
\end{proof}

The following is the main theorem of this subsection.
\begin{thm}\label{thm: representability} \label{thm:repn of ev and id}
    Let $(X,f)$ be a LG model, then under the equivalence \eqref{eqn:product} the identity  functor corresponds to:
    \begin{align*}
        id_{MF^{coh, \infty}(X,f)} &\mapsto \overline{\Delta}_*R\Gamma_{X_0}(\omega_X),
    \end{align*}
    where $\omega_X$ is the dualizing complex of $X$ and $\overline{\Delta} : X \to (X \times X)_0$ is the induced diagonal morphism.
    The evaluation functor $$ev: MF^{coh,\infty}(X,f) \otimes_{k((\beta))}MF^{coh,\infty}(X,-f) \to k((\beta))\mbox{-}\Mod$$
    corresponds to 
    \[RHom_{MF^{coh,\infty}(X^2,f\boxplus -f)}(\ol{\Delta}_* \mathcal{O}_X,-):MF^{coh,\infty}(X^2,f\boxplus -f) \to k((\beta)\mbox{-}\Mod.\]
   
\end{thm}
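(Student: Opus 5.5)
The plan is to make the chain of equivalences in the proof of \lemref{lem:functor-category} explicit (with $Y=X$, $g=f$), and then to track two specific objects through it: the identity functor and the evaluation pairing.

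First, the evaluation. Under the duality of \thmref{thm:duality}, the evaluation
\[
ev: MF^{coh,\infty}(X,f)\otimes_{k((\beta))} MF^{coh,\infty}(X,-f)\to k((\beta))\mbox{-}\Mod
\]
is, by construction, the perfect pairing $\langle -,-\rangle^{coh,\infty}$ used to prove that theorem, after base change from $k[[\beta]]$ to $k((\beta))$. That pairing was defined as $RHom(\ol\Delta_*\cO_X,\ell_*(-\boxtimes-))$. Composing with the Thom--Sebastiani equivalence $\ell_*(-\boxtimes-)$ of \thmref{thm:thom-sebastiani}, and using \lemref{lem:supp=not supp} to drop the support condition, identifies $ev$ with $RHom_{MF^{coh,\infty}(X^2,f\boxplus -f)}(\ol\Delta_*\cO_X,-)$. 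The only point to check is that base change from $k[[\beta]]$ to $k((\beta))$ commutes with this $RHom$. This holds because $\ol\Delta_*\cO_X$ is compact, so the functor $RHom(\ol\Delta_*\cO_X,-)$ is $k[[\beta]]$-linear and preserves colimits.

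Second, the identity. Under $Fun^L(\sC,\sC)\simeq \sC^\vee\otimes\sC$, the identity corresponds to the coevaluation object. It is characterised as the unique object $K$ such that the induced functor $\sF\mapsto (ev\otimes id)(\sF\boxtimes K)$ is naturally equivalent to $id$. Concretely, $K\in MF^{coh,\infty}(X^2,-f\boxplus f)$ acts as a kernel by the formula
\[
\Phi_K(\sF)=p_{2*}\bigl(RHom\text{-pairing of }\sF \text{ with } K\text{ over the first factor}\bigr),
\]
as in Lemma~\ref{lem:functor-category}. Following the steps of \eqref{eqn:duality equiv}, this pairing is computed via the Grothendieck duality $\D$ together with $\Delta^!$. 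I will therefore compute $\Phi_K$ for the candidate $K=\ol\Delta_*R\Gamma_{X_0}(\omega_X)$ using base change along the cartesian square \eqref{eqn:duality base change} and the projection formula. The pairing with $\ol\Delta_*$ of something reduces to a pairing on the diagonal. The factor $\omega_X$ appears because $RHom(-,\omega_X)$ dualises twice, so that $\D\D\simeq id$ on $D^bCoh$ by \eqref{eqn:gdcoh}. The factor $R\Gamma_{X_0}$ appears because only the part supported on $X_0$ survives after inverting $\beta$; equivalently, this follows from \lemref{lem:supp=not supp} and \lemref{lem:d1}. Since both $\Phi_K$ and $id$ are colimit-preserving and $k((\beta))$-linear, it suffices to compare them on compact generators $\sF\in D^bCoh(X_0)$, where everything is computed in $D^bCoh$.

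The main obstacle is this second computation. The difficulty is matching the unit of the duality, that is the coevaluation produced abstractly by the perfect pairing argument, with the concrete kernel. The natural transformation $\Phi_K\Rightarrow id$ must be compatible with the $\beta$-actions, and the local cohomology $R\Gamma_{X_0}$ must be justified rather than $\omega_X$ itself. My approach is to verify the triangle identity $(ev\otimes id)\circ(id\otimes coev)\simeq id$ directly. I will reduce it, via base change, to the identity $\Delta^!_{X_0}(\D\sF\boxtimes\sG)\simeq R\sHom(\sF,\sG)$ from \cite[Proposition 4.4.4]{gaitsgory}, which was already used in step $\simeq^5$ of \eqref{eqn:duality equiv}. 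The $\beta$-compatibility then follows from the $\B$-equivariance of the diagonal maps in \eqref{diag:d1}.
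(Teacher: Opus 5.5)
Your proposal is correct in outline and follows essentially the paper's route. You read off $ev$ from the perfect pairing $\langle -,-\rangle^{coh,\infty}$ of \thmref{thm:duality}, composed with Thom--Sebastiani and \lemref{lem:supp=not supp}, and you identify the identity by checking that the candidate kernel $\ol{\Delta}_*R\Gamma_{X_0}(\omega_X)$ acts as the identity via base change along diagonal squares; the paper runs this check Yoneda-style, computing $R\operatorname{Hom}(T,\Phi_K(T'))$ on $X^3$ and closing with Preygel's Lemma~B.2.1 ($T'\otimes^!\omega_{X_0}\simeq T'$), which is just the case $\sF=\cO_{X_0}$ of your identity $\Delta^!_{X_0}(\D\sF\boxtimes\sG)\simeq R\sHom(\sF,\sG)$.
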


\begin{proof}
    By Lemma \ref{lem:functor-category} we can view the identity functor $id_{MF^{coh, \infty}(X,f)}$ 
    as an object in $MF^{coh, \infty}(X \times X, -f \boxplus f)$.
   
    Since $MF^{coh, \infty}(X \times X, -f \boxplus f) = PreMF^{coh, \infty}(X \times X, -f \boxplus f) \otimes_{k[[\beta]]} k((\beta))$, it is enough to show that 
    \begin{equation}\label{repn identity}
        id_{PreMF^{coh, \infty}(X,f)}\mapsto\overline{\Delta}_*R\Gamma_{X_0}(\omega_X)\in \IndCoh_{(X_0)^2}((X^2)_0),
    \end{equation}
    and 
    \begin{equation}\label{repn ev}
        ev_{PreMF^{coh, \infty}(X,f)} \mapsto RHom^{k[[\beta]]}_{PreMF^{coh, \infty}(X \times X, -f \boxplus f)}(\overline{\Delta}_*\sO_X,-).
    \end{equation}

    The idea of the proof follows from \cite[Theorem 4.2.3]{Preygel:2011}. Let $\mathcal{H} = \mathcal{F}\boxtimes\mathcal{G} \in PreMF^{coh}(X^2,f\boxplus-f)$, and $T,T' \in PreMF^{coh}(X,f)$. Then we have following sequence of $k[[\beta]]$-linear isomorphisms:
    \begin{align}
RHom_{PreMF^{coh}(X,f)}(T,\Phi_{\mathcal{H}}(T'))
&\simeq RHom_{PreMF^{coh}(X,f)}(T,\Phi_{\mathcal{F}\boxtimes\mathcal{G}}(T'))\\
&\simeq^1
\begin{aligned}[t]
&RHom_{PreMF^{coh}_{(X_0\times X_0)}(X^2,f\boxplus -f)}
   (\ol\Delta_*\mathcal{O}_X,\mathcal{F}\boxtimes T') \\
&\qquad\otimes_{k[[\beta]]}
RHom_{PreMF^{coh}(X,f)}(T,\mathcal{G})
\end{aligned}\\
&\simeq^2
\begin{aligned}[t]
&RHom_{PreMF^{coh}(X^2,f\boxplus -f)}
   (\ol\Delta_*\mathcal{O}_X,\mathcal{F}\boxtimes T') \\
&\qquad\otimes_{k[[\beta]]}
RHom_{PreMF^{coh}(X,f)}(T,\mathcal{G})
\end{aligned}\\
&\simeq^3
\begin{aligned}[t]
&RHom_{PreMF^{coh}(X^2\times X,(f\boxplus -f)\boxplus f)}
   (\ol\Delta_*\mathcal{O}_X\boxtimes T,
    \mathcal{F}\boxtimes\mathcal{G}\boxtimes T')\\
\end{aligned}\\
&\simeq \label{eqn:repn1}
RHom_{PreMF^{coh}(X^2\times X,(f\boxplus -f)\boxplus f)}
(\ol\Delta_*\mathcal{O}_X\boxtimes T,
 \mathcal{H}\boxtimes T').
\end{align}
Here, $\simeq^1$ follows from \thmref{thm:thom-sebastiani} and \thmref{thm:duality}, $\simeq^2$ follows from \lemref{lem:supp=not supp}, and $\simeq^3$ follows from \thmref{thm:thom-sebastiani}.
Since $RHom(-,-)$ commutes with colimit in the second coordinate, we get, for $\mathcal{H} \in PreMF^{coh,\infty}(X^2,f\boxplus-f)$,
\[RHom_{PreMF^{coh,\infty}(X,f)}(T,\Phi_{\mathcal{H}}(T'))\simeq RHom_{PreMF^{coh,\infty}(X^2\times X,(f\boxplus -f)\boxplus f)}
(\ol\Delta_*\mathcal{O}_X\boxtimes T,
 \mathcal{H}\boxtimes T').\]
Let  $\ol\Delta_1$ and $\ol\Delta_2$ denote the following compositions,
\[X\times X_0\xrightarrow{\ol\Delta\times  id} (X\times X)_0\times X_0 \to (X^3)_0,\]and
\[X_0\times X\xrightarrow{id \times \ol\Delta} X_0 \times (X\times X)_0 \to (X^3)_0\] respectively.
Now, consider the following Cartesian square,
\begin{equation}\label{eqn:base change repn}
\begin{tikzcd}[column sep=large,row sep=large]
X_0\times \mathbb{B}^{\bullet}
  \arrow[r,"D_2"]
  \arrow[d,"D_1"']
&
X\times X_0\times \mathbb{B}^{\bullet}
  \arrow[d,"\overline{\Delta}_1"]
\\
X_0\times X\times \mathbb{B}^{\bullet}
  \arrow[r,"\overline{\Delta}_2"']
&
(X^3)_0\times \mathbb{B}^{\bullet}
\end{tikzcd}
\end{equation}
Here, $\ol\Delta_1$ and $\ol\Delta_2$ are the diagonal maps.
Now, for a fixed $\bullet$, we have the following series of equivalences.
\begin{align*}
R\!\operatorname{Hom}_{\operatorname{PreMF}^{coh,\infty}(X,f)}
\bigl(T,\Phi_{\overline{\Delta}_*R\Gamma_{X_0}\omega_X}(T')\bigr)
&\simeq^1
R\!\operatorname{Hom}_{\operatorname{PreMF}^{coh,\infty}(X^3,f\boxplus -f\boxplus f)}
\bigl(
\overline{\Delta}_*\mathcal{O}_X\boxtimes T,\,
T'\boxtimes \overline{\Delta}_*\mathrm{R}\Gamma_{X_0}\omega_X
\bigr)\\
&\simeq^2
R\!\operatorname{Hom}_{\operatorname{PreMF}^{coh,\infty}(X^3,f\boxplus -f\boxplus f)}
\bigl(
\overline{\Delta}_{1*}(\mathcal{O}_X\boxtimes T),\,
\overline{\Delta}_{2*}(T'\boxtimes \mathrm{R}\Gamma_{X_0}\omega_X)
\bigr)\\
&\simeq^3
R\!\operatorname{Hom}_{\operatorname{IndCoh}(X\times X_0)}
\bigl(
\mathcal{O}_X\boxtimes T,\,
\overline{\Delta}_1^{\,!}\,
\overline{\Delta}_{2*}
(T'\boxtimes \mathrm{R}\Gamma_{X_0}\omega_X)
\bigr)\\
&\simeq^4
R\!\operatorname{Hom}_{\operatorname{IndCoh}(X\times X_0)}
\bigl(
\mathcal{O}_X\boxtimes T,\,
(D_2)_*(D_1)^!
(T'\boxtimes \mathrm{R}\Gamma_{X_0}\omega_X)
\bigr)\\
&\simeq^5
R\!\operatorname{Hom}_{\operatorname{IndCoh}(X_0)}
\bigl(
(D_2)^*(\mathcal{O}_X\boxtimes T),\,
(D_1)^!
(T'\boxtimes \mathrm{R}\Gamma_{X_0}\omega_X)
\bigr)\\
&\simeq
R\!\operatorname{Hom}_{\operatorname{IndCoh}(X_0)}
\bigl(
\mathcal{O}_{X_0}\otimes T,\,
T'\otimes^! \mathrm{R}\Gamma_{X_0}\omega_{X_0}
\bigr)\\
&\simeq^6
R\!\operatorname{Hom}_{\operatorname{IndCoh}(X_0)}(T,T')
=
R\!\operatorname{Hom}_{\operatorname{PreMF}^{coh,\infty}(X,f)}(T,T').
\end{align*}
Here, $\simeq^1$ follows from \eqref{eqn:repn1}.
$\simeq^2$ follows from definitions of $\ol\Delta_1$ and $\ol\Delta_2$.
$\simeq^3$ follows from the adjunction $(\ol{\Delta_1}_*, \ol\Delta_1^!).$
$\simeq^4$ follows from the base change from \eqref{eqn:base change repn}.
$\simeq^5$ follows from the adjunction $(D_2^*,{D_2}_*).$
$\simeq^6$ follows from \cite[Lemma B.2.1]{Preygel:2011}.
Therefore, we have a $k[[\beta]]$-linear equivalence 
\[R\!\operatorname{Hom}^{k[[\beta]]}_{\operatorname{PreMF}^{coh,\infty}(X,f)}
\bigl(T,\Phi_{\overline{\Delta}_*R\Gamma_{X_0}\omega_X}(T')\bigr) \simeq R\!\operatorname{Hom}^{k[[\beta]]}_{\operatorname{PreMF}^{coh,\infty}(X,f)}(T,T').\]
This completes the proof of \eqref{repn identity}.
The proof of \eqref{repn ev} follows from \thmref{thm:duality}.
   \end{proof}

    \section{Hochschild homology and Hochschild cohomology of \texorpdfstring{$MF^{coh}(X,f)$}{MFrel(X,f)}}\label{sec:HKR}

In this section, we prove the HKR-type theorem for coherent matrix factorization categories. Throughout this section, fix the following notations. Let $(X,f)$ be a LG model which satisfies Assumption \ref{assumption}. Let $i: X_0 \to X$ be the derived zero locus of $f$. We will show that the Hochschild complex of the coherent matrix factorization category, $MF^{coh}(X,f)$, is isomorphic to the hyper-cohomology of the twisted Hodge complex of $I^{DM}X$, where the differential is given by $-\wedge df\mid_{I^{DM}X}.$

\subsection{HKR isomorphism for Hochschild homology}
\begin{thm}\label{thm: HKR homo}
	Let $X$ be a quasi-smooth, finite type, 
 separated, derived $\DM$ stack over a field $k$ of char 0 with affine stabilizers.
Let $(X,f)$ be a LG model satisfying Assumption \ref{assumption}. 
Then we have the following equivalence of $k((\beta))$-modules:
\begin{equation}\label{hkr isom mf}
    HH_{\bullet}(MF^{\mathrm{coh}}(X,f)) \simeq R\Gamma(I^{DM}X, Sym(\mathbb{L}_{I^{DM}X}[1]))^{B{\mathbb{G}_a}}\otimes_{k[[\beta]]} k((\beta)).
\end{equation}
\end{thm}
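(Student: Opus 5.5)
We compute $HH_\bullet$ as the dimension of the dualizable category $MF^{coh,\infty}(X,f)$ in $Pr^{L}_{k((\beta))}$, and then identify the resulting object geometrically. By \thmref{thm:duality} and \lemref{lem:functor-category}, the coevaluation sends $k((\beta))$ to the kernel of the identity functor. By \thmref{thm:repn of ev and id}, this kernel is $\overline{\Delta}_*R\Gamma_{X_0}(\omega_X)\in MF^{coh,\infty}(X\times X,-f\boxplus f)$, and the evaluation is $RHom(\overline{\Delta}_*\cO_X,-)$. Hence
\[
HH_\bullet(MF^{coh}(X,f))\simeq RHom_{MF^{coh,\infty}(X^2,f\boxplus -f)}\bigl(\overline{\Delta}_*\cO_X,\overline{\Delta}_*R\Gamma_{X_0}\omega_X\bigr).
\]
We first work at the level of $PreMF^{coh,\infty}$, i.e. $k[[\beta]]$-linearly, and tensor with $k((\beta))$ at the end. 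This is legitimate because $\overline{\Delta}_*\cO_X$ is compact, so $RHom$ out of it commutes with the base change $-\otimes_{k[[\beta]]}k((\beta))$.

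Next we compute the $k$-linear $RHom$ in $\IndCoh((X^2)_0)$ by adjunction. We have $RHom(\overline{\Delta}_*\cO_X,\overline{\Delta}_*\sG)\simeq RHom_{\IndCoh(X)}(\cO_X,\overline{\Delta}^!\overline{\Delta}_*\sG)$. Base change along the self-intersection $X\times_{(X^2)_0}X$ then yields $R\Gamma$ of a sheaf on this self-intersection. Since $(X^2)_0=X\times_{\A^1}X$, we get
\[
X\times_{X\times_{\A^1}X}X\simeq \mathcal{L}X\times_{\mathcal{L}\A^1}\A^1 .
\]
Here $\mathcal{L}\A^1=T[-1]\A^1=\A^1\times\B$, and the map to $\A^1$ comes from the zero section. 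We expect the result to be $R\Gamma(\mathcal{L}X,\omega_{\mathcal{L}X}\otimes\cdots)$, with the $\omega_X$ twist cancelling the $!$-pullback twist, so that it becomes $\cO$-cohomology of $\mathcal{L}X$, i.e. $C^*$ of the loop space. The factor $\times_{\mathcal{L}\A^1}\A^1$, together with the $\B$-action, must encode the $k[[\beta]]$-structure. We will organize this as follows: the $S^1$-rotation action on $\mathcal{L}X$, paired with $f$, yields an $S^1$-equivariant structure that is twisted by $df$. Taking the $\B$-(co)invariants of $\mathcal{L}\A^1$ over $0$ produces the homotopy fixed points $(-)^{S^1}$, viewed as a $C^*(BS^1)=k[[\beta]]$-module. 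This step parallels Preygel's computation for the smooth case, where $HH(PreMF)$ is $\Gamma(\mathcal{L}X,\cO)^{S^1}$ with the twisted circle action.

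Then we apply \thmref{HKR-fu+}. The equivalence $T[-1]I^{DM}X\simeq\mathcal{L}X$ lifts over $S^1\times X$, and it is compatible with the $S^1$-actions after affinization $Aff(S^1)\simeq B\G_a$ (see \eqref{affinization of BG-a}). So $R\Gamma(\mathcal{L}X,\cO)^{S^1}\simeq R\Gamma(I^{DM}X,\Sym(\L_{I^{DM}X}[1]))^{B\G_a}$, using the adjunctions \eqref{eqn: fixpt adjunction} and the equivalence $A_\phi$ to move between $S^1$-fixed points and $\epsilon$-module mapping spectra. Finally we base change to $k((\beta))$.

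The main obstacle is the second step: to show that the $\omega_X$ and $R\Gamma_{X_0}$ twists cancel exactly against the dualizing-sheaf twists from $\overline{\Delta}^!$, and that the residual $k[[\beta]]$-structure on the self-intersection matches the $S^1$ (equivalently $B\G_a$) homotopy fixed points with the $df$-twisted differential. For the support condition we would use that $\overline{\Delta}^!$ commutes with $R\Gamma_{X_0}$, and that after inverting $\beta$ only the part over $X_0$ survives; this last point is where Assumption \ref{assumption} and \lemref{lem:supp=not supp} enter. For the twist, we would first try to compute everything étale-locally on an affine quasi-smooth chart presented as a derived zero locus in smooth space, and then descend.
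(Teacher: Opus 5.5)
Your first and last steps match the paper: the kernels for the identity and the evaluation come from \thmref{thm:repn of ev and id}, and \thmref{HKR-fu+} is applied at the end. The middle step, which you yourself flag as the main obstacle, is a genuine gap, and the route you sketch for it does not lead where you need.

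The missing idea is \lemref{lem:hocolim}, which adapts Preygel's Proposition 3.2.1. For the closed immersion $\kappa\colon (X^2)_0\hookrightarrow X^2$ one has $\sF\simeq(\kappa^*\kappa_*\sF)_{S^1}$, and hence
\[
RHom^{k[[\beta]]}_{(X^2)_0}(\sF,\sG)\simeq RHom^{k}_{\Qcoh(X^2)}(\kappa_*\sF,\kappa_*\sG)^{S^1}.
\]
Here the circle acts through the null-homotopy of $f\boxplus -f$. The $C^*(BS^1;k)=k[[\beta]]$-module structure on the right is exactly the structure you must know before tensoring with $k((\beta))$. Since $\kappa\circ\overline{\Delta}=\Delta$, this moves the computation off the zero locus. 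The $(\Delta^*,\Delta_*)$-adjunction, base change $\Delta^*\Delta_*\simeq p_*p^*$ for $\mathcal{L}X=X\times_{X^2}X$, and the $(p_*,p^!)$-adjunction give $R\Gamma(\mathcal{L}X,p^!\omega_X)^{S^1}$ directly. \thmref{HKR-fu+} applies to this object. The twist is then removed on $T[-1]I^{DM}X$, not on any self-intersection, by the determinant computation $q^!\omega_X\simeq\cO_{T[-1]I^{DM}X}$ using $\L_\pi\simeq\pi^*\L_{I^{DM}X}[1]$.

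If you stay on $(X^2)_0$, your base change produces $R\Gamma(\mathcal{L}X\times_{\mathcal{L}\A^1}\A^1,\,p_2^!\omega_X)$. This is the $\omega$-cohomology of the relative loop space $\mathcal{L}(X/\A^1)$. \thmref{HKR-fu+} says nothing about that space, and your proposal never identifies its $k[[\beta]]$-structure.

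Your ``(co)invariants'' ambiguity is not cosmetic here. The natural candidate is $\cO(\mathcal{L}X)\otimes_{\cO(\mathcal{L}\A^1)}\cO(\A^1)=\cO(\mathcal{L}X)\otimes_{\cO(\B)}k$, with $\cO(\B)$ acting through $df$. This is a bar construction, i.e. homotopy orbits. It is $\beta$-torsion and vanishes after $-\otimes_{k[[\beta]]}k((\beta))$; you can check this already for $X=\A^1$, $f=x^2$. The correct answer is instead the homotopy fixed points. Passing from your $\omega$-cohomology, with its $\B$-induced structure, to $R\Gamma(\mathcal{L}X,\dots)^{S^1}$ is a Koszul-duality statement equivalent to \lemref{lem:hocolim}, so it cannot be bypassed.

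Note also which circle acts. The fixed points are taken for the circle induced by the potential, which becomes $-\wedge df$ after HKR (\thmref{lem:dr twist}). They are not taken for loop rotation, which becomes $d_{dR}$ (\thmref{lem:de Rham action}). Your phrase ``the $S^1$-rotation action on $\mathcal{L}X$, paired with $f$'' conflates the two actions that the paper keeps separate.
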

Before proving the theorem, we prove the following lemma which gives an alternative characterization of $MF^{coh, \infty}(X,f)$ that will be useful in the proof of the HKR type isomorphism in Theorem \ref{thm: HKR homo}. 
Recall the definition of homotopy orbits and homotopy fixed points from Section \ref{sec:homotopical alg}.

\begin{lem}\label{lem:hocolim}
    Let $\sF \in \Qcoh(X_0) $. Then there exists a functorial equivalence
    \[\hocolim \{  \cdots \to i^*i_*\sF[2] \to i^*i_*\sF[1] \to i^*i_*\sF \} \xrightarrow{\simeq} \sF,\]
    which induces the following equivalence:
    \[RHom^{k[[\beta]]}_{\Qcoh(X_0)}(\sF,\sG) \simeq RHom^{k}_{\Qcoh(X)}(i_*\sF,i_*\sG)^{S^1}.\]
\end{lem}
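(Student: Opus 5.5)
We would follow the strategy of Preygel's computation of morphisms in $PreMF$ via the $S^1$-action on $\Qcoh(X)$, using the fact that $X_0 = X\times^h_{\A^1} 0$ is a derived zero locus, so $i$ is quasi-smooth of virtual codimension one with conormal complex $\sN_i \simeq \cO_{X_0}[1]$ up to the twist recorded by $f$.

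\textbf{Step 1: the tower and its colimit.} We build the tower from the monad $i^*i_*$ on $\Qcoh(X_0)$. Base change along the cartesian square $X_0\times_X X_0 \simeq X_0\times \B$ gives
\[
i^*i_*\sF \simeq v_*pr_{X_0}^*\sF \simeq \sF\otimes_k \cO_{\B}.
\]
Here $\cO_{\B}\simeq k\oplus k[-1]$ is an $\E_\infty$-coalgebra, as recalled in the discussion of $\B$. Hence there is the triangle of \cite[Example 3.1.13]{Preygel:2011},
\[
i^*i_*\sF\to\sF\xrightarrow{\beta}\sF[2],
\]
already used in \lemref{lem:beta torsion}. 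The augmented cosimplicial/bar resolution of $\sF$ by the comonad $i^*i_*$ (dually, the bar construction for the $\cO_{\B}$-coaction) has, after totalization, a filtration. Its associated graded pieces are the cofibers of the triangle above. Its stages are the maps $i^*i_*\sF[n]\to i^*i_*\sF[n-1]$ obtained by composing the counit with the degree-one part of $\cO_{\B}$.

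To show that the hocolim maps by an equivalence to $\sF$, we check it after applying $i_*$. This functor is conservative, since $i$ is a closed immersion, hence affine. After $i_*$, the comonad resolution splits: $i_*\sF$ is a retract of $i_*i^*i_*\sF$ via the unit. The hocolim of the split tower then recovers $i_*\sF$ by the standard extra-degeneracy argument. Functoriality in $\sF$ is automatic because everything is built from the adjunction $(i^*,i_*)$.

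\textbf{Step 2: passing to $\Hom$.} We apply $RHom_{\Qcoh(X_0)}(-,\sG)$ to the hocolim. This turns it into a holim of
\[
RHom(i^*i_*\sF[n],\sG)\simeq RHom_{\Qcoh(X)}(i_*\sF,i_*\sG)[-n].
\]
The tower of these objects is exactly the tower computing homotopy fixed points for the $S^1$-action on $RHom(i_*\sF,i_*\sG)$. Indeed, \cite[Construction 3.1.1]{Preygel:2011} gives an $S^1$-action on $\Qcoh(X)$ through the loop $0\to\A^1\leftarrow 0$, and under $A_\phi$ and \eqref{eqn: fixpt adjunction} the fixed points are
\[
Map_{\epsilon\text{-}Mod_k}(k,-)\simeq RHom_{k[\epsilon]}(k,-).
\]
This is computed by the standard cobar tower whose $n$th stage is shifted by $[-n]$, and it carries its natural $k[[\beta]]=C^*(BS^1;k)$-module structure. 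We then match the $k[[\beta]]$-linear structure on the left side, where $\beta$ comes from the $\B$-action \eqref{eqn:action}, with that on the right side. Both are induced by the same map
\[
\cO_{\B}\simeq C^*(S^1;k)
\]
together with Koszul duality $RHom_{\cO_{\B}}(k,k)\simeq k[[\beta]]$ \cite[Proposition 3.1.4]{Preygel:2011}.

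\textbf{Main obstacle.} The hard part will be making the identification in Step 2 coherent rather than level-wise. We need the tower of Step 1 to be identified, as a diagram, with the cobar/Tate tower for the $\epsilon$-module $RHom(i_*\sF,i_*\sG)$, including the $k[[\beta]]$-action. Our plan is to do this at the level of categories. We identify
\[
\Qcoh(X_0)\simeq \mathrm{Mod}_{i_*\cO_{X_0}}(\Qcoh(X)),
\]
with $i_*\cO_{X_0}\simeq \mathrm{cofib}(f:\cO_X\to\cO_X)$. The $\B$-coaction then becomes the $\Qcoh(\B)$-module structure, and we invoke the $\infty$-categorical equivalence $\Qcoh(X_0)\simeq \Qcoh(X)^{hS^1}$-type statement of Preygel (the ``$\B$-equivariant'' description, \cite[Cor. 3.2.4]{Preygel:2011}). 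With that in hand, the $\Hom$ formula is the general fact that mapping spectra in a homotopy-fixed-point category are homotopy fixed points of the mapping spectra. The hocolim statement is then its unwinding at the level of objects.
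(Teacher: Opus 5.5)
Your argument is sound in outline. Your Step 2 is exactly the paper's: feed the colimit into $RHom(-,\sG)$, turn it into a limit, apply the $(i^*,i_*)$ adjunction, and read off $S^1$-fixed points.

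\textbf{Where you differ: Step 1.} The paper does not prove the colimit statement. It imports $(i^*i_*)_{S^1}\simeq\mathrm{id}$ from \cite[Example~3.1.11]{Preygel:2011}, then unwinds $\hocolim_{BS^1}$ through the simplicial bar model of $BS^1$ and Dold--Kan. Because the tower is then a $BS^1$-colimit by construction, the $S^1$-action on $RHom_{\Qcoh(X)}(i_*\sF,i_*\sG)$ is simply the one transported from $i^*i_*\sF$ through the adjunction. So the coherence problem you single out never arises there.

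You instead prove $|(i^*i_*)^{\bullet+1}\sF|\simeq\sF$ directly. Since $i$ is affine, $i_*$ is conservative and cocontinuous on $\Qcoh$, and after $i_*$ the augmented object is split by the unit. This is self-contained and isolates exactly which properties of $\Qcoh$ are used. However, it produces the comonadic resolution, not yet the tower of shifted copies of $i^*i_*\sF$ with an $S^1$-structure. Collapsing it to that tower requires two further steps: identify the comonad $i^*i_*$ with convolution by the coalgebra $\cO_{\B}$, then normalize. That is precisely the content of Preygel's example, so your ``main obstacle'' is the price of not citing it. In return, your categorical route (mapping objects in a homotopy-fixed-point category are fixed points of mapping objects) would also give the $k[[\beta]]$-linearity that the paper leaves implicit.

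\textbf{Two statements need correcting.}

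First, $i^*i_*\sF\simeq v_*pr_{X_0}^*\sF$ is right, but this object is $\cO_{\B}\cdot\sF$ in the sense of \eqref{eqn:action}, not $\sF\otimes_k\cO_{\B}\simeq\sF\oplus\sF[\pm1]$. The extension $\sF[1]\to i^*i_*\sF\to\sF$ has class $\beta_{\sF}$; this is your own triangle, which is incompatible with a literal splitting. For example, take $X=\A^1$, $f=x^2$, $A=k[x]/x^2$ and $\sF=k$. Then $i^*i_*k\simeq[A\xrightarrow{x}A]$, which is perfect over $A$, whereas $k\oplus k[1]$ is not.

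Second, there is no $S^1$-action on all of $\Qcoh(X)$ with fixed points $\Qcoh(X_0)$. Take $X=\A^1$ and $f=x$. Then $\End(\mathrm{id})=HH^\bullet(k[x])$ has no negative-degree part and $(HH^0)^\times=k^\times$. Hence $S^1$-actions are classified by $H^2(BS^1;k^\times)=k^\times$, i.e.\ by a monodromy scalar $c$. The fixed points are $\Qcoh(\A^1\times\B)$ for $c=1$ and $0$ otherwise, never $\mathrm{Vect}$. The statement you want is $\Qcoh(X_0)\simeq\Qcoh_{X_0}(X)^{hS^1}$ with monodromy $\exp(f)$, which makes sense because $f$ acts locally nilpotently there. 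Since $i_*$ lands in $\Qcoh_{X_0}(X)$, your Hom formula is unaffected.
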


\begin{proof}
    Let $\sF,\sG \in \Qcoh(X_0)$ and let $i: X_0 \to X$  denote the natural quasi-smooth closed immersion, where $X_0$ is the derived zero locus of $f$. 
    Now, $i^*i_* \in \Fun^L_k(\Qcoh(X_0), \Qcoh(X_0))$. By \cite[Example 3.1.11]{Preygel:2011} we have the following equivalence of endofunctors on $\Qcoh(X_0)$,
    \[(i^*i_*)_{S^1} \simeq id_{\Qcoh(X_0)}.\]
    Applying \defref{fixed pt and orbit} (ii) for $G=S^1$ we get,
    \begin{equation}\label{eqn:hocolim1}
        (i^*i_*)_{S^1}(\sF) \simeq \hocolim_{BS^1}\{i^*i_*\sF\}.
    \end{equation}
    To calculate this colimit, one can take the simplicial bar resolution of $BS^1$ defined as $[n]\mapsto (S^1)^n$ with usual face and degeneracy maps. By the Dold-Kan correspondence, we can take the colimit over the normalized chain complex. (see \cite[Example 3.1.11]{Preygel:2011} for details). Hence we get
    \begin{equation}\label{eqn:hocolim}
        (i^*i_*)_{S^1}(\sF) \simeq \hocolim\{\cdots \to (i^*i_*)\sF[2] \to (i^*i_*)\sF[1] \to (i^*i_*)\sF\}.
    \end{equation}
    Therefore, we get the first claim of Lemma \ref{lem:hocolim}.

    The proof of the second part goes along the same lines as in \cite[Proposition 3.2.1]{Preygel:2011}. We need to work in $\Qcoh(X_0)$ instead of $D^bCoh(X_0)$. Consider the following equivalences:
    \begin{align*}
        RHom^{k[[\beta]]}_{\Qcoh(X_0)}(\sF,\sG) &{=}^{a} RHom^{k[[\beta]]}_{\Qcoh(X_0)}(\hocolim\{\cdots \to (i^*i_*)\sF[2] \to (i^*i_*)\sF[1] \to (i^*i_*)\sF\},\sG)\\
        &{=}^{b} \holim_{BS^1} RHom^{k[[\beta]]}_{\Qcoh(X_0)}(i^*i_*\sF,\sG)\\
        &{=}^{c} \holim_{BS^1} RHom^{k}_{\Qcoh(X)}(i_*\sF,i_*\sG) \\
        &{=}^{d} RHom^{k}_{\Qcoh(X)}(i_*\sF,i_*\sG)^{S^1}.
    \end{align*}

    Here, ${=}^{a}$ follows from \eqref{eqn:hocolim}, ${=}^{b}$ follows as $Hom(-,\sG)$ is a contravariant left exact functor. We are in a presentable stable $\infty$-category $\Qcoh(X_0)$ so it is exact \cite[Proposition 1.1.4.1]{HA}, ${=}^{c}$ follows from adjoint pair $i^* : \Qcoh(X) \leftrightarrows \Qcoh(X_0) : i_*$, and ${=}^{d}$ follows from the definition of homotopy fixed points \defref{fixed pt and orbit}.
    This completes the proof.
\end{proof}

Now, we prove a standard known fact in \lemref{clm:q^!} which will be used to prove the HKR isomorphism. We will briefly recall the determinant ($\det$) of a perfect complex from \cite[Section 3]{Toen_Atiyah-class}; a graded version of this is also discussed in \cite[Appendix B]{hlp-theta-strata}.
For a derived stack $X$, let $(Vect(X)^{\simeq}, \oplus)$ denote the symmetric monoidal $\infty$-groupoid of locally free sheaves and isomorphisms with symmetric monoidal structure given by direct sum and $\Pic (X)$ denote the group like $\E_{\infty}$ monoid of invertible objects and isomorphisms between them. Let $K(\sC)$ denote the algebraic $K$-theory of a category $\sC$ , for details see \cite[Section 2]{kth-gth-khan}, \cite[Section 7.1]{k-th-tabuada}. Then 
\begin{equation}
    \det (E) := \bigwedge^{\rank(E)} (E)
\end{equation}
defines a symmetric monoidal functor of symmetric monoidal $\infty$-groupoids
\begin{equation} \label{det1}
    \det : (Vect(X)^{\simeq}, \oplus) \to (\Pic (X), \otimes).
\end{equation}
Then following \cite[pg 87]{hlp-theta-strata} one constructs
\begin{equation}
    u: (Vect(X),\oplus)^{gp} \to K(Perf(X)) 
\end{equation}
where $(-)^{gp}$ denotes the group-like completion of a symmetric monoidal $\infty$-monoid. Since $\Pic(X)$ is group like there exists an unique extension of $\det$ in \eqref{det1} to 
\begin{equation}
    \det : (Vect(X),\oplus)^{gp} \to \Pic (X).
\end{equation}
Now, $u$ and $\det$ are functorial in $X$, so one can think of them as presheaves valued in group-like symmetric monoidal $\infty$-groupoids on $X$. By \cite[Theorem 1.19]{det-heleodoro}, $u$ is an isomorphism for affine derived schemes. Hence, the shefified functor $u^{sh}$ is an isomorphism of sheaves. Note that $\Pic$ is a smooth (hence \'etale) sheaf. Therefore, define the $\det$ functor as the following composition \cite[pg. 87]{hlp-theta-strata},
\begin{equation}
    K(Perf(X)) \to \Gamma(X,K(Perf(-))^{sh}) \xrightarrow{\det \circ (u^{sh})^{-1}} \Gamma(X,\Pic(-)) \simeq \Pic(X).
\end{equation}
Some key properties of the $\det$ are,
\begin{enumerate}
    \item[$\bullet$] Let $W\xrightarrow{f} X \xrightarrow{g} Y$ be morphisms between quasi-smooth derived algebraic stacks with perfect cotangent complexes $\L_f$ and $\L_g$. Then $\det$ applied to the cofiber sequence $f^*\L_g \to \L_{g\circ f} \to \L_f$ induces a canonical isomorphism \cite[eqn (46)]{hlp-theta-strata}
    \begin{equation}\label{det2}
        \det(\L_{g\circ f}) \simeq \det(\L_f) \otimes f^*\det(\L_g).
    \end{equation}
    \item[$\bullet$] For any morphism $P\to Q \in Perf(X)$ there exists an isomorphism \cite[pg.88]{hlp-theta-strata}
    \begin{equation}
        \det (Q) \simeq \det(P) \otimes \det(\hocofib(P\to Q)).
    \end{equation}
\end{enumerate}

\begin{lem}\label{clm:q^!}
    Consider the following composition of maps
    \[T[-1]I^{DM}X \xrightarrow{\pi} I^{DM}X \xrightarrow{p} X\]
    and let $q=p\circ \pi$. Then, 
    $$q^!\omega_X \simeq \cO_{T[-1]I^{DM}X}.$$
\end{lem}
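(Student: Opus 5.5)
Since $\omega_X = p_X^!(k)$ where $p_X : X \to \Spec k$, we have $q^!\omega_X \simeq (p_X\circ q)^!(k) = \omega_{T[-1]I^{DM}X}$. The lemma therefore says that the dualizing complex of $T[-1]I^{DM}X$ is trivial. The plan is to factor $q^!$ as $\pi^!\circ p^!$ and compute each factor using the determinant formalism recalled above. For a quasi-smooth morphism $g$ one has $g^!(-) \simeq g^*(-)\otimes \det(\L_g)[\rank \L_g]$; this is the formula $i^! = i^*\otimes\omega_i$ of \cite[Lemma B.0.2]{hlp-theta-strata}, extended from closed immersions to quasi-smooth maps using smooth-locally factorizations. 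Combining this with \eqref{det2}, the claim reduces to showing that $\det(\L_{T[-1]I^{DM}X})$ is trivial, after accounting for shifts.

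\textbf{Step 1 ($p$).} In characteristic $0$, $p : I^{DM}X \to X$ is a disjoint union, \'etale locally on $X$, of closed immersions of fixed loci. On a chart $[U/G]$ with $G$ finite, $I^{DM}X$ is $\coprod_{(g)} [U^g/C(g)]$ and the maps $U^g \to U$ are quasi-smooth. I will keep $p^!\omega_X = \omega_{I^{DM}X}$ as an intermediate and not try to trivialize it separately, since on each component it is twisted by the determinant of the conormal complex.

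\textbf{Step 2 ($\pi$).} The map $\pi: T[-1]I^{DM}X = \Spec_{I^{DM}X}\Sym(\L_{I^{DM}X}[1]) \to I^{DM}X$ is affine and quasi-smooth, with relative cotangent complex $\L_\pi \simeq \pi^*\L_{I^{DM}X}[1]$. Hence
\[\pi^!(-) \simeq \pi^*(-)\otimes \det(\pi^*\L_{I^{DM}X}[1])[\rank \L_{I^{DM}X}[1]] \simeq \pi^*(-)\otimes \pi^*\det(\L_{I^{DM}X})^{-1}[-\rank\L_{I^{DM}X}],\]
using $\det(E[1]) \simeq \det(E)^{-1}$, which follows from the cofiber-sequence property of $\det$.

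\textbf{Step 3 (combine).} Since $I^{DM}X$ is quasi-smooth, hence Gorenstein, its dualizing complex is $\omega_{I^{DM}X} \simeq \det(\L_{I^{DM}X})[\rank \L_{I^{DM}X}]$, applied componentwise. Substituting into Step 2 gives
\[q^!\omega_X \simeq \pi^!\omega_{I^{DM}X} \simeq \pi^*\det(\L_{I^{DM}X})\otimes\pi^*\det(\L_{I^{DM}X})^{-1} \simeq \cO_{T[-1]I^{DM}X},\]
with the shifts by $\pm\rank\L_{I^{DM}X}$ cancelling. In short, the $[1]$-shift in the fibre of $\pi$ inverts the determinant.

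\textbf{Main obstacle.} The hardest part is making the formula $g^! \simeq g^*\otimes\det(\L_g)[\rank]$ precise for the non-proper, non-closed-immersion map $\pi$ and for the stacky, possibly non-equidimensional $I^{DM}X$. I would first prove everything \'etale locally, where $X$ is a global quotient of a derived complete intersection $U=\Spec(A)$ with $A$ presented as a Koszul algebra over a smooth $P$ by a finite group. There $\det$ is computed explicitly and compatibly with the $G$-action. I would then glue using that $\Pic$ is an \'etale sheaf and that $\det$ is functorial, as recalled above. Because the rank can jump between components of $I^{DM}X$, I would work component by component throughout.
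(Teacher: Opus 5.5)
\textbf{There is a genuine gap, and it is shared with the paper's own argument.}

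Your argument is in substance the paper's. Both rest on four ingredients: $\omega\simeq\det(\L)$ up to shift; the transitivity triangle for $T[-1]I^{DM}X\xrightarrow{\pi}I^{DM}X\to\Spec k$; the identification $\L_\pi\simeq\pi^*\L_{I^{DM}X}[1]$; and $\det(E[1])\simeq\det(E)^{-1}$. You apply the determinant formula to $\pi$ and to $I^{DM}X$ separately, while the paper applies it once to $T[-1]I^{DM}X$ and splits along the triangle.

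The step both versions need fails: $\pi$ is \emph{not} quasi-smooth unless $I^{DM}X$ is smooth.
\begin{itemize}
\item $\L_{I^{DM}X}$ has Tor-amplitude in homological degrees $[0,1]$, so $\L_\pi\simeq\pi^*\L_{I^{DM}X}[1]$ has Tor-amplitude in $[1,2]$.
\item This lies in $[0,1]$ only if $\L_{I^{DM}X}$ is a vector bundle.
\item Otherwise the degree-$2$ part of $\L_{I^{DM}X}[1]$ contributes polynomial algebras on even generators to $\Sym(\L_{I^{DM}X}[1])$. So $T[-1]I^{DM}X$ is in general not even eventually coconnective, and no formula $\pi^!\simeq\pi^*(-)\otimes(\text{shifted line bundle})$ is available.
\end{itemize}
Working \'etale locally with Koszul models, as you suggest for the ``main obstacle'', would exhibit this problem rather than repair it. Your unused aside that $U^g\to U$ is quasi-smooth fails for the same reason: $\L_{U^g/U}$ is the non-invariant part of $\L_U|_{U^g}$ shifted by $[1]$, which reaches homological degree $2$ in general.

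\textbf{The statement is false in this generality.} Take $X=\B=\Spec(k\otimes_{k[x]}k)=\Spec k[\epsilon]$, with $\epsilon$ in homological degree $1$. This is a quasi-smooth derived affine scheme, so $I^{DM}X=X$, $q=\pi$, and $\L_X\simeq\cO_X[1]$.
\begin{itemize}
\item Structure sheaf: $\Sym(\L_X[1])=k[\epsilon,u]$ with $u$ in homological degree $2$ (equivalently, $\mathcal{L}\B\simeq\B\times(0\times_{\B}0)$). So $R\Gamma(T[-1]X,\cO)$ has $k$ in every homological degree $\geq 0$.
\item Dualizing side: $\omega_X\simeq\Hom_k(k[\epsilon],k)$. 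Since $\pi$ is affine, taking $q^!$ right adjoint to $q_*$ on $\Qcoh$ as in the paper's proof of the main theorem gives $R\Gamma(T[-1]X,q^!\omega_X)\simeq\Hom_{k[\epsilon]}(k[\epsilon,u],\Hom_k(k[\epsilon],k))\simeq\Hom_k(k[\epsilon,u],k)$. This has $k$ in every homological degree $\leq 0$. The $\IndCoh$ version of $q^!$ has the same homotopy groups.
\end{itemize}
Hence $q^!\omega_X\not\simeq\cO_{T[-1]X}$, even up to shift.

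This is consistent with $HH_\bullet(D^bCoh(\B))\simeq HH_\bullet(k[[\beta]])$ living in non-negative cohomological degrees. That matches $R\Gamma(\mathcal{L}\B,\omega_{\mathcal{L}\B})$ and not $R\Gamma(\mathcal{L}\B,\cO)$.

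Your argument, like the paper's, goes through exactly when $\pi$ is quasi-smooth, i.e.\ when $I^{DM}X$ is smooth (e.g.\ $X$ smooth).
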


\begin{proof}
To see the claim, consider the fundamental exact triangle of cotangent complexes \cite[Theorem 2.21 (i)]{khan_derived-geom} corresponding to the composition of maps
\[T[-1]I^{DM}X \xrightarrow{\pi} I^{DM}X \to \Spec k,\] which is 
\begin{equation}\label{eqn:tangent det}
    \pi^*\L_{I^{DM}X} \to \L_{T[-1]I^{DM}X} \to \L_{\pi}.
\end{equation}
By \cite[Remark 2.1]{Shifted_cotangent} we have $\L_{\pi} \simeq \pi^*\L_{I^{DM}X}[1]$. Therefore, applying $\det$ to \eqref{eqn:tangent det} we get,
\begin{align*}
    q^!\omega_X = \omega_{T[-1]I^{DM}X}&=\det(\L_{T[-1]I^{DM}X}) \\
    &=^1\det(\pi^*\L_{I^{DM}X}) \otimes \det(\pi^*\L_{I^{DM}X}[1])\\
    & =^2 \pi^* (\det(\L_{I^{DM}X}) \otimes \det(\L_{I^{DM}X}[1]))\\
    & =^3 \pi^* (\det(\L_{I^{DM}X}) \otimes \det(\L_{I^{DM}X})^{-1})\\
     & = \pi^* (\cO_{I^{DM}X})\\
    & = \cO_{T[-1]I^{DM}X}.
\end{align*}
Here $=^1$ follows from \eqref{det2}, $=^2$ follows from the fact that $\pi^*$ is monoidal, and
$=^3$ follows by applying \eqref{det2} to the following triangle,
\[\cO_{I^{DM}X} \to 0 \to \cO_{I^{DM}X}[1] \xrightarrow{+1}.\]
This completes the proof.
\end{proof}

We now prove the HKR-type theorem for coherent matrix factorization categories.
\begin{proof}[Proof of \thmref{thm: HKR homo}]
We have the following series of isomorphisms:
\begin{align*}
HH_{\bullet}(MF^{\mathrm{coh}}(X,f)) 
&  \simeq^1  ev(\mathrm{id}_{MF^{\mathrm{coh},\infty}(X,f)}) \\
&  \simeq^2 RHom^{k((\beta))}_{MF^{\mathrm{coh},\infty}(X^2, f\boxplus -f)}
\big(\overline{\Delta}_*\mathcal{O}_X,\ \overline{\Delta}_*\omega_X \big) \\
&\simeq^3 RHom^{k}_{\Qcoh(X^2)}
\big(\Delta_*\mathcal{O}_X,\ \Delta_*\omega_X \big)^{S^1} \otimes_{k[[\beta]]} k((\beta)) \\
&  \simeq^4 RHom^{k}_{\Qcoh(X)}
\big(\Delta^*\Delta_*\mathcal{O}_X,\ \omega_X \big)^{S^1} \otimes_{k[[\beta]]} k((\beta))
\end{align*}
$\simeq^1$ follows from definition of Hochschild homology for a $k((\beta))$-linear $\infty$-category; see Definition~\ref{defn:HH}.
$\simeq^2$ follows from representability of the identity and evaluation functors by kernels; see Theorem~\ref{thm: representability}.
$\simeq^3$ follows from application of Lemma~\ref{lem:hocolim}, together with the fact that $\overline{\Delta_*}\mathcal{O}_X$ and $\overline{\Delta_*}\omega_X$ lie in the essential image of $\Qcoh(X^2) \hookrightarrow \IndCoh((X^2)_0)$.
$\simeq^4$  follows from adjunction $(\Delta^*,\Delta_*)$ on $\Qcoh(X^2)$.

Continuing from above, we have the following equivalences:
\begin{align*}
RHom^{k}_{\Qcoh(X)}
\big(\Delta^*\Delta_*\mathcal{O}_X,\ \omega_X \big)^{S^1} \otimes_{k[[\beta]]} k((\beta))
&  \simeq^5 RHom^{k}_{\Qcoh(X)}
\big(p_*p^*\mathcal{O}_X,\ \omega_X \big)^{S^1} \otimes_{k[[\beta]]} k((\beta))\\
&  \simeq^6 RHom^{k}_{\Qcoh(X)}
\big(p_*\mathcal{O}_{\mathcal{L}X},\ \omega_X \big)^{S^1} \otimes_{k[[\beta]]} k((\beta))\\
&  \simeq^7 RHom^{k}_{\Qcoh(\mathcal{L} X)}
\big(\mathcal{O}_{\mathcal{L}X}, p^!\omega_X \big)^{S^1} \otimes_{k[[\beta]]} k((\beta))
\end{align*}
$\simeq^5$ follows from identification $\Delta^*\Delta_* \simeq p_*p^*$ induced by the Cartesian diagram \eqref{eqn:loop stack} defining the loop stack $\mathcal{L}X := X \times_{X \times X} X$.
$\simeq^6$ follows from the fact $p^*\mathcal{O}_X \simeq \mathcal{O}_{\mathcal{L}X}$ for the structure morphism $p:\mathcal{L}X \to X$.
$\simeq^7$ follows from adjunction $(p_*,p^!)$ in $\Qcoh$ \cite[Section 3.3]{indcoherentsheaves}. Continuing from above, we have the following equivalences:
\begin{align*}
RHom^{k}_{\Qcoh(\mathcal{L} X)}
\big(\mathcal{O}_{\mathcal{L}X}, p^!\omega_X \big)^{S^1} \otimes_{k[[\beta]]} k((\beta))&  \simeq^{8} R\Gamma(\mathcal{L}X, p^!\omega_X)^{S^1} \otimes_{k[[\beta]]} k((\beta)) \\
&  \simeq^{9} R\Gamma\big(T[-1]I^{DM}X, 
q^!\omega_X\big)^{B{\mathbb{G}_a}}\otimes_{k[[\beta]]} k((\beta)) \\ 
&  \simeq^{10} R\Gamma\big(T[-1]I^{DM}X, 
\mathcal{O}_{T[-1]I^{DM}X}\big)^{B{\mathbb{G}_a}}\otimes_{k[[\beta]]} k((\beta)) \\
&  \simeq^{11} R\Gamma(I^{DM}X, Sym(\mathbb{L}_{I^{DM}X}[1]))^{B{\mathbb{G}_a}} \otimes_{k[[\beta]]} k((\beta)) .
\end{align*}
$\simeq^{8}$ follows from identification of $R\Hom_{\Qcoh(\mathcal{L} X)}(\cO_{\mathcal{L}X},- )$ with derived global sections $R\Gamma(\mathcal{L}X,-)$.
$\simeq^{9}$ follows from application of the  (HKR) isomorphism \thmref{HKR-fu+}, identifying functions on $\mathcal{L}X$ with functions on $T[-1]I^{DM}X$, compatible with the $S^1$-action by Theorem \ref{HKR-fu+}.
$\simeq^{10}$ follows from description of $q^!\mathcal{O}_X$ via the dualizing complex; see Lemma~\ref{clm:q^!}.
$\simeq^{11}$ follows from expression of functions on the shifted tangent stack $T[-1]I^{DM}X$ as the symmetric algebra on the shifted cotangent complex, \defref{defn:tangent stack}.
This completes the proof.
\end{proof}

\begin{remk}\label{two actions}
    In the isomorphism \eqref{hkr isom mf}, we have to deal with two $S^1$ actions on $\mathcal{L}X$; one is loop rotation, and the other is induced from the potential of the LG model $(X,f)$. By the affinization map Theorem \ref{HKR-fu+}, this induces two different actions of $B\G_a$ on $T[-1]I^{DM}X$. The $B\G_a$ action corresponding to loop rotation is discussed in Section \ref{sec:loop rotation and BG-a action} and the $B\G_a$ action corresponding to the potential is discussed in Section \ref{sec:BG-a action induced from potential}.
\end{remk}

\begin{remk}\label{BG-a with endo}
    Note that $B\G_a = Aff(S^1) = \Spec (k \oplus k[1])$. Therefore, $B\G_a$-action on $\cO_{T[-1]I^{DM}X}$ is determined by a degree $-1$ differential on $\cO_{T[-1]I^{DM}X}$.
\end{remk}

\subsection{\texorpdfstring{Loop rotation and \(B\mathbb{G}_a\)-action}{Loop rotation and BGa-action}}
\label{sec:loop rotation and BG-a action}

Consider the natural $S^1$ action on $\mathcal{L}X$ by rotation of loops. By HKR isomorphism of $\mathcal{L}X \simeq T[-1]I^{DM}X$ this induces a $B\G_a$ action on $T[-1]I^{DM}X$. First, we recall the definition of the derived de Rham complex of a derived stack $X$ from \cite[Section 5.3]{raksit-hochschildhomologyderivedrham}, \cite[Section 5]{HKR2026}, \cite[Section 5.2]{bms}, \cite[Section 2.2]{marangoni:tel-02957674}.
Let $\L_X$ be the cotangent complex of $X$ \cite[Section 3.2]{Thesis_Lurie}. For $i>0$ define the \emph{i-th derived exterior power} \cite[Notation 5.3.5]{raksit-hochschildhomologyderivedrham} of $\L_X$ over $\cO_X$ by
\[\bigwedge^i_{\cO_X}\L_X := \Sym^i_{\cO_X}(\L_X[1])[-i].\]
Since $\L_X$ and $\cO_X$ are connected by \cite[Proposition 25.2.4.2]{sag}, this agrees with the left derived functor of the classical exterior power functor. In particular if $X$ is smooth then $\bigwedge^i_{\cO_X}\L_X \simeq \bigwedge^i_{\cO_X}\Omega_X.$
\begin{defn}\cite[eqn. 5.2.4]{HKR2026}
    For a derived $\DM$ stack $X$, we define the derived de Rham complex as the totalization of the following mixed graded algebra:
\begin{equation}
    DR(X) := Tot \left[ \cO_X \xrightarrow{d_{dR}} \L_X \xrightarrow{d_{dR}}  \bigwedge^2_{\cO_X}\L_X \xrightarrow{d_{dR}} \cdots \right],
\end{equation}\label{eqn de rham}
where $\cO_X$ is in cohomological degree 0 and $d_{dR}$ is the de Rham differential.
\end{defn}
The derived de Rham complex has a natural filtration called the \emph{Hodge filtration} \cite[eqn. 5.2.5]{HKR2026} which is defined as: for $i>0$ the $i$-th filtered part is given by
\begin{equation}\label{eqn: hodge fil}
    Fil^i_{Hdg}DR(X) := Tot \left[ \cdots 0\to   \bigwedge^{i}_{\cO_X}\L_X \xrightarrow{d_{dR}}  \bigwedge^{i+1}_{\cO_X}\L_X \xrightarrow{d_{dR}} \cdots \right],
\end{equation}
and $Fil^i_{Hdg}DR(X) = DR(X)$ for $i\leq 0$.

 Now, similar to \cite[Proposition 4.7]{Ben_Zvi_2012} we have the following;
\begin{thm}\label{lem:de Rham action}
    Consider the action map of $B\G_a$ on $T[-1]I^{DM}X$ induced from loop rotation on $\mathcal{L}X$ via \eqref{hkr isom mf} (see Remark \ref{two actions})
    \[B\G_a \times T[-1]I^{DM}X \xrightarrow{\mu} T[-1]I^{DM}X.\]
    Its evaluation on the pullback of functions along the natural projection $T[-1]I^{DM}X \to X$ is given by the de Rham differential on the derived de Rham complex
    \[\cO_{I^{DM}X} \xrightarrow{d_{dR}} \L_{I^{DM}X}.\]
    (Explanation: the action $\mu$ determines a degree -1, square zero endomorphism $d$ of $\cO_{T[-1]I^{DM}X}$ (see Remark \ref{BG-a with endo}). Now, $\cO_{T[-1]I^{DM}X} = \Sym(\L_{I^{DM}X}[1]) = DR(I^{DM}X).$  This lemma guarantees that $d=d_{dR}$.)
\end{thm}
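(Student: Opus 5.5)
The approach follows the proof of \cite[Proposition 4.7]{Ben_Zvi_2012}, now on the orbifold inertia. We use the functoriality and $S^1$-equivariance of the HKR equivalence in \thmref{HKR-fu+}.

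First, the $S^1$-action on $\mathcal{L}X=\Map(S^1,X)$ by loop rotation comes from the action of $S^1$ on itself. By \thmref{HKR-fu+} the identification $\widehat{aff}^*:T[-1]I^{DM}X\simeq \mathcal{L}X$ is induced by affinization $S^1\to Aff(S^1)\simeq B\G_a$ (\eqref{affinization of BG-a}). The identification is also compatible with the group structures, since \eqref{affinization of BG-a} is an equivalence of group stacks. The rotation action therefore corresponds to the action of $B\G_a$ on $T[-1]I^{DM}X=\Map(B\G_a,X)$ restricted to the relevant component, namely precomposition with the translation action of $B\G_a$ on itself. This reduces the problem to describing the $B\G_a$-action on $\Map(B\G_a,I^{DM}X)=T[-1]I^{DM}X$. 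Here we apply \thmref{HKR-fu+} to identify the inertia part, which is fixed by rotation because the cyclic automorphisms are central.

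Second, we translate the group action into algebra, following Remark \ref{BG-a with endo}. Since $\cO_{B\G_a}=k\oplus k[-1]=C^*(S^1;k)$, an action of $B\G_a$ on the relative affine stack $T[-1]I^{DM}X\to I^{DM}X$ is the same as a mixed structure on $\Sym(\L_{I^{DM}X}[1])$. This is a square-zero derivation $d$ of cohomological degree $+1$, or homological degree $-1$, as in the statement. It is a derivation because $\mu$ is a map of derived stacks, so $\mu^*$ is multiplicative. We use the equivalence $A_\phi$ of \cite[Theorem 4.1]{TV_multi_hkr} to pass between $S^1$-algebras and $\epsilon$-algebras.

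Third, a derivation of a symmetric algebra is determined by its value on generators. It therefore suffices to compute $d$ on the weight-zero part $\cO_{I^{DM}X}$, which is exactly the claim about pullbacks of functions, and to check compatibility with the weight grading. For $g\in\cO_{I^{DM}X}$, $\mu^*(\pi^*g)$ is computed from the composite
\[B\G_a\times \Map(B\G_a,I^{DM}X)\to \Map(B\G_a,I^{DM}X)\xrightarrow{ev_0} I^{DM}X.\]
This composite equals evaluation $B\G_a\times\Map(B\G_a,I^{DM}X)\to I^{DM}X$. The linear term in $\epsilon$ of the pullback of $g$ is then the universal derivation $\cO\to\L_{I^{DM}X}$, i.e.\ $d_{dR}$. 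Indeed, $T[-1]=\Map(\Spec k[\epsilon],-)$ with $\epsilon$ of degree $-1$, and the first-order part of evaluation along a square-zero extension is by definition the differential into the cotangent complex.

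The identity $d=d_{dR}$ on all of $\Sym(\L[1])$ then follows from the universal property of the de Rham complex as the free mixed graded algebra on $\cO$ \cite[Section 5]{HKR2026}, \cite[Section 5.3]{raksit-hochschildhomologyderivedrham}. Both sides are weight-raising derivations agreeing on $\cO_{I^{DM}X}$, and $\L_{I^{DM}X}$ is generated by $d_{dR}\cO$ étale-locally.

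The main obstacle is the first step. One must check that the $S^1$-equivariance in \thmref{HKR-fu+} really matches rotation with the translation $B\G_a$-action, and not a twisted version involving the inertia components. I would first try to reduce to affine étale charts of $X$ and to the scheme case of \cite{Ben_Zvi_2012}, using functoriality in $X$.
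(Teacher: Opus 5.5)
This is essentially the paper's proof. Like the paper (following \cite[Proposition 4.7]{Ben_Zvi_2012}), you identify the action on pulled-back functions with the evaluation (counit) map for $\Map(B\G_a\times\wh{S^1},X)\simeq\Map(B\G_a,I^{DM}X)$, read off its $\epsilon$-linear part as the universal derivation $d_{dR}$, and add a small extension to all of $\Sym(\L_{I^{DM}X}[1])$ via the universal property of $DR$ that the paper leaves implicit. The one fix is in your first step: the equivalence of \thmref{HKR-fu+} comes from $S^1\to B\G_a\times\wh{S^1}$, not $S^1\to B\G_a$, so $T[-1]I^{DM}X\simeq\Map(B\G_a\times\wh{S^1},X)$ rather than $\Map(B\G_a,X)$; the $\wh{S^1}$-factor drops out on functions because $C^*(BC_r;k)\simeq k$ in characteristic $0$ (not because the inertia is fixed), and your suggested reduction to étale charts by schemes would only see the untwisted sector.
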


\begin{proof} The proof is similar to \cite[Proposition 4.7]{Ben_Zvi_2012}. We will suitably modify each step to incorporate the inertia stack. With the same notations as in \defref{defn:orbifold inertia} let $\wh{S^1}:= \lim_r BC_r \in Pro(DSt_{/k}).$
    Recall that 
    \begin{equation}\label{adjoint 1}
        \cO:DSt_{/k} \leftrightarrows DGA^{op}_k : \Spec
    \end{equation} is an adjoint pair \cite[Proposition 3.1]{Ben_Zvi_2012} and 
    \begin{equation}\label{adjoint 2}
        \Map(B\G_a \times \wh{S^1},-) : DSt_{/k} \leftrightarrows DSt_{/k} : (B\G_a \times \wh{S^1}) \times -
    \end{equation} 
    is also an adjoint pair \cite[Lemma 3.2]{noohi}. 
    Consider the following two functors:
    \[\ell : DSt_{/k} \to DGA_k^{op}\] 
    defined by,
    \[X \mapsto (\cO(B\G_a \times \wh{S^1} \times X));\]
    and
    \[r : DGA^{op}_{k} \to DSt_{/k}\]
    defined by,
    \[R \mapsto \Map(B\G_a \times \wh{S^1}, \Spec(R)) .\]
     Thus by \eqref{adjoint 1} and \eqref{adjoint 2} $\ell$ is the left adjoint of $r$.

    Let $\ell_{aff}$ denote the restriction of $\ell$ to the full subcategory of affine derived stacks (see Section \ref{sec:affine stack})  $AffDSt_{/k}$ of $DSt_{/k}$ 
    \begin{equation}\label{eqn:l_aff}
        \ell_{aff} : AffDSt_{/k} \to DGA_{k}^{op}
    \end{equation} given by
    \[\Spec R \mapsto \cO({\wh{S^1}\times \Spec R}) \oplus \cO({\wh{S^1}\times \Spec R})[-1]. \]
    Then the right adjoint $r=r_{aff}$ has the following specific form 
    \begin{equation}\label{eqn:r_aff}
        r_{aff}(R) =  \left( \Spec(\Sym ~\L_{I^{DM}\Spec R}[1])  \right),
    \end{equation}
    because of the equivalence \cite[Section 4]{HKR2026},
    \[T[-1]I^{DM}X \simeq \Map(B\G_a \times \wh S^1, X).\]
   
    and the differential is given by the de Rham differential.
       
    To conclude the proof of the proposition, we have to calculate the counit of these adjunctions
    \[\ell \circ r (\cO(X)) \to \cO(X).\]
    This is given by 
    \begin{equation}\label{eqn:dl1}
        \cO(X) \to \cO(B\G_a \times \wh{S^1} \times \Map(B\G_a \times \wh{S^1} , X)).
    \end{equation}
    Using the isomorphism \cite[Definition 3.1]{HKR2026},
    \begin{equation}\label{eqn:dl0}
        \Map(\wh S^1 , X) \simeq I^{DM}X
    \end{equation}
    the right hand side of \eqref{eqn:dl1} is isomorphic to
    \[\cO(B\G_a \times \wh S^1 \times \Map(B\G_a,I^{DM}X)).\]
    By the definition of $B\G_a$, we get that this is further isomorphic to 
    \[\cO(\wh S^1 \times \Map(B\G_a, I^{DM}X)) \oplus \cO(\wh S^1 \times \Map(B\G_a, I^{DM}X))[-1].\]
    Therefore, \eqref{eqn:dl1} induces the following morphism to the square zero extension by the shifted structure sheaf of $\Map(B\G_a, I^{DM}X)$,
    \begin{equation}
        \cO({I^{DM}X}) \to \cO(  \Map(B\G_a, I^{DM}X)) \oplus \cO(  \Map(B\G_a, I^{DM}X))[-1].
    \end{equation}
    By construction, the projection to the first factor comes from the natural projection $$T[-1]I^{DM}X \simeq \Map(B\G_a, I^{DM}X) \to I^{DM}X.$$ To prove the lemma, we have to identify the second projection to the de Rham differential.

    Now, if we assume $X$ is an affine derived stack, considering the counit morphism of the restricted functors to $AffDSt_{/k}$, we get,
    \[l_{aff}\circ r_{aff}(\cO(X)) \to \cO(X).\]
    Applying definitions \eqref{eqn:l_aff} and \eqref{eqn:r_aff} we get the following morphism in $DGA_k$,
    \begin{equation}\label{eqn:dl2}
        \cO(X) \to \cO({\wh S^1 \times \Spec(\Sym(\L_{I^{DM}X}[1]))}) \oplus \cO({\wh S^1 \times \Spec(\Sym(\L_{I^{DM}X}[1]))}[-1])
    \end{equation}
    \begin{equation}
        \cO({\Spec I^{DM}X}) \to \cO({ \Spec(\Sym(\L_{I^{DM}X}[1]))}) \oplus \cO({\Spec(\Sym(\L_{I^{DM}X}[1]))}[-1]).
    \end{equation}
    The projection to the first factor is induced from the natural projection $\Spec(\Sym(\L_{I^{DM}X}[1])) \to \Spec (I^{DM}X).$
    The projection to the second factor is given by
    \begin{equation}
        \cO({I^{DM}X}) \xrightarrow{d_{dR}} \L_{I^{DM}X} \xrightarrow{r} \Sym(\L_{I^{DM}X}[1])[-1],
    \end{equation}
    where $r$ is the inclusion map. Comparing two right adjoints, we get the following commutative diagram,
    \[\begin{tikzcd}[cramped]
	       {\cO({I^{DM}X}}) && {\cO({(  \Map(B\G_a, I^{DM}X))}[-1]}) \\
	       {\cO({I^{DM}X}}) && { \Sym(\L_{I^{DM}X}[1])[-1]}
	       \arrow[from=1-1, to=1-3]
	       \arrow["id"', from=1-1, to=2-1]
	       \arrow["h", from=1-3, to=2-3]
	       \arrow["{d_{dR} \circ r}"{description}, from=2-1, to=2-3].
    \end{tikzcd}\]
    Here vertical maps are isomorphisms, $h$ is coming from the equivalence \cite[Proposition 4.8]{HKR2026} 
    $$T[-1]I^{DM}X \simeq \Map(B\G_a, I^{DM}X).$$ 
    Comparison of the horizontal morphisms concludes the proof.
\end{proof}

\subsection{Homotopy fixed points and twisted differential}\label{sec:BG-a action induced from potential}
Let $(X,f)$ be a LG model. Let $X_{00}\xrightarrow{i_0}X$ denotes the derived zero locus of $0:X\to \A^1,$ and let $(X\times X)_{0} \xrightarrow{k} X\times X$ denotes the derived zero locus of $f\boxplus -f : X\times X \to \A^1$ (see \eqref{eqn:f box g}). 
\begin{equation}\label{eqn:pullback1}
\begin{tikzcd}[cramped]
	{X_{00}} && {(X\times X)_0} \\
	X && {X\times X}
	\arrow["{\Delta_{00}}", from=1-1, to=1-3]
	\arrow["{i_0}"', from=1-1, to=2-1]
	\arrow["k", from=1-3, to=2-3]
	\arrow["{\overline\Delta}"{description}, from=2-1, to=1-3]
	\arrow["\Delta"', from=2-1, to=2-3]
\end{tikzcd}
\end{equation}
Let $\Lambda : = H_*(S^1;k) = k[\epsilon]/\epsilon^2$ with $\epsilon$ is a variable of degree 1. By Lemma   \ref{lem:hocolim} or \cite[Lemma 3.5]{HLP_hodge} we get the $\epsilon$ action on 
\begin{equation}\label{eqn: rhom1}
    RHom_{\Qcoh(X\times X)}(k_*\ov\Delta_*\cO_X,k_*\ov\Delta_*\cO_X)
\end{equation}
is given by
\[\phi \mapsto \epsilon_X \circ \phi - (-1)^{|\phi|}\phi \circ \epsilon_X,\]
where $\epsilon_X$ is the variable in $\cO_{({X\times X})_0} := \{\cO_X[\epsilon_X]/\epsilon_X^2=0,d\epsilon=f\boxplus -f\}.$ By adjunction $(k^*,k_*)$   \eqref{eqn: rhom1} is isomorphic to 
\begin{equation}\label{eqn: rhom2}
    RHom_{\Qcoh((X\times X)_0)}(k^*k_*\ov\Delta_*\cO_X, \ov\Delta_*\cO_X).
\end{equation}
The $\Lambda$-module structure on $\eqref{eqn: rhom2}$ is given by
\[\epsilon : \phi(-) \mapsto -\phi(\epsilon\cdot (-) ).\]
Therefore, we have a $\Lambda$ module structure on $k^*k_*\ov\Delta_*\cO_X$. 
Hence, we have an induced action of $B\G_a$ on $\cO_{T[-1]I^{DM}X}$ by the isomorphisms in Theorem \ref{thm: HKR homo}, which depends on the potential of the LG model $(X,f)$. Via the isomorphism in \thmref{HKR-fu+}, we have the following characterization of this action, which is a generalization to the non-smooth setting of the result of  Preygel in \cite[Proposition 8.2.4]{Preygel:2011}.

\begin{thm}\label{lem:dr twist}
    Consider the action of $B\G_a$ on $\cO_{T[-1]I^{DM}X} =_{({\rm by}~{\rm Lemma~}\ref{lem:de Rham action})} (\Sym(\L_{I^{DM}X}[1]), d_{dR})$ induced from the potential.
    The $B\G_a$ action induced on the  derived de Rham complex $(\Sym(\L_{I^{DM}X}[1]), d_{dR})$ from the potential is equivalent to  $d_{dR}f\mid_{I^{DM}X}\wedge -.$
\end{thm}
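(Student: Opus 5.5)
The plan is to reduce to a local computation, do that computation in an explicit Koszul model of the loop space, and then use functoriality of the HKR equivalence (\thmref{HKR-fu+}) together with descent to globalize.

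\textbf{Step 1: identify the action as a derivation.} The $B\G_a$-action from the potential comes from the $\Lambda$-module structure on $k^*k_*\ov\Delta_*\cO_X$ described just before the statement. By Remark~\ref{BG-a with endo}, this action is the same as a degree $-1$ square-zero endomorphism $\delta_f$ of $\cO_{T[-1]I^{DM}X}$. I will check that $\delta_f$ is a derivation of $\Sym(\L_{I^{DM}X}[1])$ that is linear over $\cO_{I^{DM}X}$. For this I write $\cO_{\mathcal{L}X}=\Delta^*\Delta_*\cO_X$ and apply the Cartesian square \eqref{eqn:pullback1}. The operator $\epsilon_X$ acts through the algebra structure of $\cO_{(X\times X)_0}$, so it is compatible with the multiplication coming from the $\B$-action in \eqref{eqn:action}. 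It kills the image of $\cO_X$, because $\epsilon_X$ is a module endomorphism commuting with multiplication by functions. A square-zero $\cO$-linear derivation of $\Sym(\L[1])$ of degree $-1$ is determined by its restriction to the generators $\L_{I^{DM}X}[1]\to\cO_{I^{DM}X}$. It therefore suffices to identify this map with contraction against $df|_{I^{DM}X}$. Written on the de Rham side (cohomological grading), this is the operator $df|_{I^{DM}X}\wedge-$.

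\textbf{Step 2: local computation for affine $X$.} Both sides are functorial in $X$ for étale maps, by the functoriality in \thmref{HKR-fu+}. Both are also compatible with the étale-local decomposition of the orbifold inertia. So I may reduce to $X=\Spec A$ a derived affine scheme, presented as a Koszul complex $A=\Sym_P(V[1])$ over a smooth algebra $P$ (this uses quasi-smoothness). In that case $I^{DM}X=X$, and $\mathcal{L}X$ is computed by the Koszul resolution of the diagonal. I then take the explicit model $\cO_{(X\times X)_0}=\cO_{X\times X}[\epsilon_X]$ with $d\epsilon_X=f\otimes1-1\otimes f$, and I choose a homotopy $h$ with $f\otimes 1-1\otimes f=\sum_j(x_j\otimes1-1\otimes x_j)h_j$ on the Koszul generators. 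With this, $\epsilon_X$ acting on $k^*k_*\ov\Delta_*\cO_X$ becomes, after restricting to the diagonal, multiplication by $\sum_j \partial_j f\,\xi_j$. Here $\xi_j$ are the odd generators representing $dx_j\in\L_X[1]$. This is exactly Preygel's computation \cite[Proposition 8.2.4]{Preygel:2011}. The only change is that the Koszul variables of $A$ also have to be tracked, and since $f$ is pulled back from $P$ they do not contribute.

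\textbf{Step 3: globalize.} Both actions are given by morphisms of sheaves $\L_{I^{DM}X}[1]\to\cO_{I^{DM}X}$ that are natural in étale maps. Since they agree locally, they agree globally. I must also check that the identification is compatible with the loop-rotation $B\G_a$-structure of Theorem~\ref{lem:de Rham action}, so that the combined differential is $d_{dR}\pm df\wedge-$. I expect this to follow because the two actions come from commuting $S^1$-actions.

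The main obstacle I expect is Step~2 at the level of $I^{DM}X$ rather than $X$. On the twisted sectors $\Map(BC_r,X)$ one has to see that the restriction of the $\epsilon$-action is $d(f\circ p)$ and not a correction term. My plan is to use that $f$ is $C_r$-invariant, so $f\circ p$ factors through evaluation, and that the affinization map $\widehat{aff}$ is compatible with $p^*f$.
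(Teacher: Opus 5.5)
There is a genuine gap in Step~1, and it carries through to Step~3. The operator you want to reach, $df|_{I^{DM}X}\wedge-$, is left multiplication by the odd element $df|_{I^{DM}X}\in\L_{I^{DM}X}[1]\subset\Sym(\L_{I^{DM}X}[1])$. It sends $1\mapsto df|_{I^{DM}X}\neq 0$, it is linear over all of $\Sym(\L_{I^{DM}X}[1])$, and it is \emph{not} a derivation: the Leibniz rule applied to $1\cdot 1$ would force $\theta(1)=0$.

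So your claims that $\delta_f$ is an $\cO_{I^{DM}X}$-linear derivation and that it kills the image of $\cO_X$ contradict the statement you are proving. The justification is also a non sequitur: ``$\epsilon_X$ commutes with multiplication by functions'' constrains $\delta_f(a\cdot m)$, not $\delta_f(1)$.

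An $\cO$-linear derivation determined by a map $\L_{I^{DM}X}[1]\to\cO_{I^{DM}X}$ is contraction $\iota_v$ with a shifted vector field. That is the polyvector (Hochschild cohomology) picture, where $\iota_{df}$ appears. No regrading turns a contraction into a wedge product; the two even move in opposite directions, $\Sym^i\to\Sym^{i-1}$ versus $\Sym^i\to\Sym^{i+1}$.

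Your own Step~2 answer, multiplication by $\sum_j\partial_jf\,\xi_j$, is the correct local form, and it contradicts Step~1. Step~3 then glues the wrong kind of datum. You glue maps $\L_{I^{DM}X}[1]\to\cO_{I^{DM}X}$, when what should be glued is a degree $-1$ section of $\cO_{T[-1]I^{DM}X}$.

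The correct structural input is that $\delta_f$ is linear over the algebra $\cO_{\mathcal{L}X}$ itself. For instance, exhibit it as the $\cO(\B)=k[\xi]$-module structure pulled back along $\mathcal{L}X\xrightarrow{\mathcal{L}f}\mathcal{L}\A^1\simeq\A^1\times\B\to\B$. Then $\delta_f$ is multiplication by the degree $-1$ function $\delta_f(1)=(\mathcal{L}f)^*\xi$.

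Functoriality of Theorem~\ref{HKR-fu+} applied to $f\colon X\to\A^1$ then does the rest. Here $I^{DM}\A^1=\A^1$, and $\xi$ corresponds to $dx$ on $T[-1]\A^1$. So functoriality identifies $\delta_f(1)$ with $d(f\circ p)=df|_{I^{DM}X}$, and multiplicativity of the HKR equivalence turns ``multiplication by $\delta_f(1)$'' into ``wedge with $df|_{I^{DM}X}$''. This is the content of the paper's argument: its diagram through $k[x]$ together with multiplicativity of HKR.

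This route also makes your local Koszul computation and your separate treatment of twisted sectors unnecessary. Note too that a DM stack is \'etale locally $[\Spec A/G]$, not $\Spec A$. Your reduction to the case $I^{DM}X=X$ therefore never sees the twisted sectors, which is exactly the obstacle you flag.
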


\begin{proof}
    
    By affinization, $B\G_a = \Spec (\cO(S^1))$ \eqref{affinization of BG-a}. The $B\G_a$-action on $\Sym(\L_{I^{DM}X}[1])$ is equivalent to a degree $-1$ square zero endomorphism of $(\Sym(\L_{I^{DM}X}[1]), d_{dR})$ by \remref{BG-a with endo}. Let $\Lambda = \cO(S^1) = k[\epsilon]/\epsilon^2$ where $\epsilon$ is a variable of degree $-1$. 
    
    Therefore, to prove the lemma, it is enough to show that the action of $\epsilon$ on $\Sym(\L_{I^{DM}X}[1])$ is given by $-\wedge df\mid_{I^{DM}X}$ where $d=d_{dR}$. 
    Now, By the isomorphism of Theorem \ref{HKR-fu+} we have $\mathcal{L}X \simeq T[-1]I^{DM}X$ over $X$. Taking corresponding ring label maps, we get the following commutative diagram of $\cO(X)$-algebra homomorphisms (for a derived stack $Y$, we denote $\cO_Y(Y)$ by $\cO(Y)$ in the following)
  
\[\begin{tikzcd}[cramped]
	{\cO(\Map(S^1,X))} && {\cO(\Map(B\G_a,I^{DM}X))} \\
	\\
	{k[x]} & {\cO(X)}
	\arrow["\simeq"', from=1-1, to=1-3]
	\arrow["{x\mapsto f}"{description}, from=3-1, to=1-1]
	\arrow["{x\mapsto df\mid_{I^{DM}X}}"{description}, from=3-1, to=1-3]
	\arrow["{x\mapsto f}"', from=3-1, to=3-2]
	\arrow["\alpha"{description}, from=3-2, to=1-1]
	\arrow["{\alpha'}"{description}, from=3-2, to=1-3]
\end{tikzcd}\]
where $\alpha$ and $\alpha'$ are structure maps induces from the natural projections $\mathcal{L}X \to X$ and $T[-1]I^{DM}X \to X$ respectively.
Here, multiplication on the left-hand side corresponds to the wedge product on the right-hand side.
Therefore, using multiplicativity of HKR isomorphism \cite[Proposition 5.6]{HKR2026}, \cite[Theorem 4.1]{TV_multi_hkr} we get the $\epsilon$-action on $\cO_{T[-1]{I^{DM}X}}$ is given  by $-\wedge df\mid_{I^{DM}X}.$ This completes the proof.
\end{proof}

\begin{cor}\label{cor:homo}
    Let $(X,f)$ be a LG model. Then we have the following equivalence of  2-periodic complexes,
    \begin{equation}\label{f3}
        HH_{\bullet}(MF^{\mathrm{coh}}(X,f)) \xrightarrow{\simeq} R\Gamma \left(\Tot\left( \cdots \to \bigoplus_{i~ {\rm even}}\bigwedge^i\L_{I^{DM}X} \xrightarrow{(-)\wedge df\mid_{I^{DM}X}} \bigoplus_{i~ {\rm odd}}\bigwedge^i\L_{I^{DM}X} \cdots \right) \right).
    \end{equation}
\end{cor}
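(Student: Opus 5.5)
The plan is to start from \thmref{thm: HKR homo}, which gives
\[HH_{\bullet}(MF^{\mathrm{coh}}(X,f)) \simeq R\Gamma(I^{DM}X, \Sym(\L_{I^{DM}X}[1]))^{B\G_a}\otimes_{k[[\beta]]}k((\beta)).\]
We then make the $B\G_a$-fixed points explicit using \thmref{lem:dr twist}. By \remref{BG-a with endo}, a $B\G_a$-action on a complex $M$ is the same as a square-zero endomorphism $\epsilon$ of degree $-1$, i.e.\ a module over $\Lambda=k[\epsilon]/\epsilon^2$. The homotopy fixed points $M^{B\G_a}$ are then $R\Hom_{\Lambda}(k,M)$. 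This is computed by the standard Koszul-type resolution of $k$ over $\Lambda$: it gives the $k[[\beta]]$-module $\Tot^{\Pi}(M[[\beta]], d_M+\beta\epsilon)$ with $\deg\beta=2$, and the $k[[\beta]]$-structure on the fixed points is the action of $C^*(BS^1;k)\simeq k[[\beta]]$.

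\textbf{Step 1.} We must use the $B\G_a$-action coming from the potential, not from loop rotation (\remref{two actions}). By \thmref{lem:dr twist}, $\epsilon$ acts on $\Sym(\L_{I^{DM}X}[1])=\bigoplus_i \bigwedge^i\L_{I^{DM}X}[i]$ by $(-)\wedge df|_{I^{DM}X}$. The internal differential is that of the derived exterior powers; the de Rham differential $d_{dR}$ belongs to the other, loop-rotation action and does not enter here. So the fixed points are
\[R\Gamma\Bigl(I^{DM}X,\ \Tot^{\Pi}\bigl(\textstyle\bigoplus_i\bigwedge^i\L_{I^{DM}X}[i][[\beta]],\ \beta\cdot(-\wedge df)\bigr)\Bigr).\]
To pass $R\Gamma$ through the fixed points, note that $R\Gamma$ is a limit and so commutes with $\holim_{B\G_a}$.

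\textbf{Step 2.} Invert $\beta$. After tensoring with $k((\beta))$, we set $\beta$-multiplication to the identity up to the shift $[2]$. The grading by $i$ together with the shifts $[i]$ then collapses to a $\mathbb{Z}/2$-grading: even $i$ go to one parity and odd $i$ to the other. The differential becomes $-\wedge df|_{I^{DM}X}$ from even to odd exterior powers and back, which gives the 2-periodic totalization in \eqref{f3}.

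\textbf{Main obstacle.} The delicate point is the completion issue: $(-)\otimes_{k[[\beta]]}k((\beta))$ versus the product totalization. Product totalizations do not commute with localization in general. I would first try to prove the needed boundedness from quasi-smoothness. Since $I^{DM}X$ is quasi-smooth, $\L_{I^{DM}X}$ is perfect of Tor-amplitude $[-1,0]$. Hence each $\bigwedge^i\L_{I^{DM}X}[i]$ sits in degrees bounded in terms of $i$, and in each total degree only finitely many terms contribute after inverting $\beta$. I expect this to require a filtration or Mittag-Leffler argument on the weight filtration. Concretely, I would filter by $\bigoplus_{i\ge n}$, check that the filtration is complete and exhaustive, compare the two sides on associated graded pieces, and use that $R\Gamma$ on the finite-type DM stack $I^{DM}X$ has finite cohomological dimension. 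If $\bigwedge^i$ is unbounded (because of the $\Sym$ of degree $-1$ generators), the statement should be read with the $\Tot$ in \eqref{f3} interpreted as the appropriate completed (product) totalization, and I would state it that way.
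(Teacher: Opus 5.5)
Your argument is the paper's: \thmref{thm: HKR homo}, then \thmref{lem:dr twist} to identify $\epsilon$ with $(-)\wedge df|_{I^{DM}X}$ on $\mathfrak{D}=(\Sym(\L_{I^{DM}X}[1]),0)$, then $\mathfrak{D}^{B\G_a}\simeq\Map_{\Lambda}(k,\mathfrak{D})$ computed as the $\beta$-twisted complex, then inverting $\beta$ to fold it into the 2-periodic complex. The paper passes over the completion issue you raise (it writes $\mathfrak{D}\otimes_k k[\beta]$), but your hoped-for finiteness is false unless $I^{DM}X$ is smooth, because $\Sym^i(\L_{I^{DM}X}[1])$ reaches degree $-2i$ and so infinitely many $i$ contribute in each degree of the folded complex; the resolution really gives $\prod_{j\ge 0}\mathfrak{D}\beta^j$, as you wrote, and reading $\Tot$ in \eqref{f3} as the product totalization is the correct formulation rather than just a fallback.
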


\begin{proof}
By \thmref{thm: HKR homo}, it suffices to compute $(\Sym(\L_{I^{DM}X}[1]))^{B\G_a}$, where the $B\G_a$-action is given by $-\wedge df\mid_{I^{DM}X}$, as described in \thmref{lem:dr twist} where $d$ is the de Rham differential. 

Let $\mathfrak{D} := (\Sym(\L_{I^{DM}X}[1]),0)$. Then we have,
\begin{equation}\label{eqn:f1}
    \mathfrak{D}^{B\G_a} \simeq_{by~~\eqref{eqn: fixpt adjunction}} Map_{\Lambda\mbox{-}dg\mbox{-}\Mod}(k, \mathfrak{D}).
\end{equation}
Now, by \cite[eqn. 2.3.49]{MR3877165} the RHS of \eqref{eqn:f1} is isomorphic to 
\[(\mathfrak{D}\otimes_k k[\beta], (-)\wedge df\mid_{I^{DM}X})\]
where $\beta$ is a variable of degree $2$.
Calculating this tensor product, we get the right-hand side of \eqref{f3}.

\end{proof}

\subsection{HKR type theorem for Hochschild cohomology}

In this section, we prove HKR-type of isomorphism for the Hochschild cohomology of the coherent matrix factorization category. The main theorem is as follows.
\begin{thm}\label{thm:Hochs cohomo main}
	Let $X$ be a quasi-smooth, finite type, 
 separated, derived $\DM$ stack over a field $k$ of characteristic 0 with affine stabilizers.
Let $f: X \to \mathbb{A}^1$ be a morphism, such that the LG model $(X,f)$ satisfies Assumption \ref{assumption}. 
Then we have the following equivalence of $k((\beta))$-modules:
\begin{equation*}
	HH^{\bullet}(MF^{\mathrm{coh}}(X,f)) \simeq R\Gamma(I^{DM}X, \Sym(\mathbb{L}_{I^{DM}X}[1]) \otimes \pi^*\omega_X^{\vee}[-\dim X])^{B{\mathbb{G}_a}}\otimes_{k[[\beta]]} k((\beta)).
\end{equation*}
where $\pi : I^{DM}X \to X$ is the natural projection and $\omega_X$ is the dualizing complex of $X$.
\end{thm}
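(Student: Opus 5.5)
We follow the proof of \thmref{thm: HKR homo} step by step, replacing the evaluation functor by $RHom(\mathrm{id},\mathrm{id})$. By \defref{defn:cohomo}, $HH^{\bullet}(MF^{\mathrm{coh}}(X,f)) = RHom_{\Fun^L(MF^{coh,\infty}(X,f),MF^{coh,\infty}(X,f))}(\mathrm{id},\mathrm{id})$.

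\textbf{Step 1: reduce to a mapping complex of kernels.} Under \lemref{lem:functor-category} and \thmref{thm:repn of ev and id}, the identity functor corresponds to $\overline{\Delta}_*R\Gamma_{X_0}(\omega_X)$. Hence
\[
HH^{\bullet}(MF^{\mathrm{coh}}(X,f)) \simeq RHom^{k((\beta))}_{MF^{\mathrm{coh},\infty}(X^2,-f\boxplus f)}\bigl(\overline{\Delta}_*R\Gamma_{X_0}\omega_X,\ \overline{\Delta}_*R\Gamma_{X_0}\omega_X\bigr).
\]
After inverting $\beta$, the local cohomology functor $R\Gamma_{X_0}$ can be dropped on both sides. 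The reason is that \lemref{lem:supp=not supp} (via Assumption \ref{assumption}) identifies the supported and unsupported categories, so the cone of $R\Gamma_{X_0}\omega_X\to\omega_X$ becomes zero in $MF$.

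\textbf{Step 2: pass to ordinary sheaves with an $S^1$-action.} Apply \lemref{lem:hocolim} to the quasi-smooth immersion $(X^2)_0\to X^2$. This gives
\[
RHom_{\Qcoh(X^2)}(\Delta_*\omega_X,\Delta_*\omega_X)^{S^1}\otimes_{k[[\beta]]}k((\beta)).
\]
Next use the adjunction $(\Delta^*,\Delta_*)$ and base change $\Delta^*\Delta_*\simeq p_*p^*$ along $p:\mathcal{L}X\to X$. This turns the expression into $RHom_{\mathcal{L}X}(p^*\omega_X, p^!\omega_X)^{S^1}$. Since $X$ is quasi-smooth, $\omega_X$ is perfect and invertible up to shift: it is $\det(\L_X)$ shifted by $\dim X$, as discussed before \lemref{clm:q^!}. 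Therefore the expression becomes
\[
R\Gamma(\mathcal{L}X,\ p^!\omega_X\otimes p^*\omega_X^{\vee})^{S^1}.
\]

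\textbf{Step 3: transport along HKR.} Transport along the $S^1$-equivariant equivalence $\mathcal{L}X\simeq T[-1]I^{DM}X$ of \thmref{HKR-fu+}. The $S^1$-action becomes the $B\G_a$-action, as in step $\simeq^9$ of the homology proof. By \lemref{clm:q^!}, $q^!\omega_X\simeq\mathcal{O}$, and $q^*\omega_X^\vee=\pi_T^*\pi^*\omega_X^\vee$. Pushing forward along the affine morphism $T[-1]I^{DM}X\to I^{DM}X$ gives $\Sym(\L_{I^{DM}X}[1])\otimes\pi^*\omega_X^\vee$ by the projection formula.

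\textbf{Main obstacle: the shift $[-\dim X]$.} We must keep track of degrees: $\omega_X$ sits as $\det(\L_X)[\dim X]$, so in the $q^!$ computation the determinant factor cancels while a shift survives. We plan to compare the graded determinant conventions of \cite[Appendix B]{hlp-theta-strata} with \lemref{clm:q^!}. The aim is to write $p^!\omega_X\otimes p^*\omega_X^{\vee}$ as $\mathcal{O}\otimes q^*\omega_X^\vee$ and read off the residual shift as $[-\dim X]$. We would also check that the $B\G_a$-action is compatible with the twist by the line bundle $\pi^*\omega_X^\vee$. This should hold because the twist is pulled back from $X$, on which the rotation action is trivial, so the description in \thmref{lem:dr twist} persists.
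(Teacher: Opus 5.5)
Up to the end of Step~3 your argument is the paper's proof, slightly reorganized: the kernel of the identity from \thmref{thm:repn of ev and id}, then \lemref{lem:hocolim}, base change to $\mathcal{L}X$, the HKR equivalence, and \lemref{clm:q^!}. Two differences are in your favour. First, you justify discarding $R\Gamma_{X_0}$ via \lemref{lem:supp=not supp}, which the paper does silently. Second, the paper's step $\simeq^1$ uses Grothendieck duality to trade $\omega_X$ for $\mathcal{O}_X$ and then computes $q^!\mathcal{O}_X$; you keep $\omega_X$ and untwist by $p^*\omega_X^\vee$. Since $q^!\mathcal{O}_X\simeq q^!\omega_X\otimes q^*\omega_X^\vee\simeq q^*\omega_X^\vee$, the two are the same computation, and yours makes the paper's bare assertion $\simeq^9$ transparent.

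The gap is your ``main obstacle'', which you leave open and misdiagnose. There is no residual shift to extract. In \lemref{clm:q^!} the complex $\L_{T[-1]I^{DM}X}$ has rank $0$, so $q^!\omega_X\simeq\mathcal{O}_{T[-1]I^{DM}X}$ on the nose; neither a determinant nor a shift survives. Your Step~3 therefore already yields exactly $R\Gamma(I^{DM}X,\Sym(\L_{I^{DM}X}[1])\otimes\pi^*\omega_X^\vee)^{B\G_a}\otimes_{k[[\beta]]}k((\beta))$. If, as you say, $\omega_X=p_X^!k\simeq\det(\L_X)[\dim X]$, then the shift $[-\dim X]$ is already inside $\omega_X^\vee$, and appending another one double counts. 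For example, $X=\A^1$, $f=x^2$ satisfies Assumption~\ref{assumption}. There $\Sym(\L_X[1])\otimes\omega_X^\vee\simeq\Sym(T_X[-1])$ and $HH^{\bullet}\simeq k$ sits in even degrees, whereas an extra $[-1]$ would put it in odd degrees. The factor $[-\dim X]$ in the statement is correct only if $\omega_X$ there means the line bundle $\det\L_X$. So this step is settled by fixing that normalization, not by hunting for a leftover shift; carried out as you describe, your plan would give the wrong parity when $\dim X$ is odd.

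Your compatibility argument for the $B\G_a$-action is also wrong as stated. The action in the theorem comes from the potential, namely the $S^1$ of \lemref{lem:hocolim} acting through $\epsilon_X$, not from loop rotation. Moreover, loop rotation is not trivial on functions pulled back from $X$: by \thmref{lem:de Rham action} it acts on them by $d_{dR}$. The correct reason is different. The potential action comes from the $\mathcal{O}_{(X^2)_0}$-module structure, so it is linear over functions from $X$. Under HKR it becomes multiplication by $df\mid_{I^{DM}X}$ (\thmref{lem:dr twist}), which is $\Sym(\L_{I^{DM}X}[1])$-linear and hence commutes with tensoring by $\pi^*\omega_X^\vee$.
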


\begin{proof}
	We have the following series of isomorphisms:
\begin{align*}
HH^{\bullet}(MF^{\mathrm{coh}}(X,f)) 
&\simeq^0 RHom^{k((\beta))}_{MF^{\mathrm{coh},\infty}(X^2, f\boxplus -f)}
\big(\overline{\Delta}_*\omega_X,\ \overline{\Delta}_*\omega_X \big) \\
&\simeq^1 RHom^{k((\beta))}_{MF^{\mathrm{coh},\infty}(X^2, f\boxplus -f)}
\big(\overline{\Delta}_*\mathcal{O}_X,\ \overline{\Delta}_*\mathcal{O}_X \big) \\
&\simeq^2 RHom^{k}_{\Qcoh(X^2)}
\big(\Delta_*\mathcal{O}_X,\ \Delta_*\mathcal{O}_X \big)^{S^1}\otimes_{k[[\beta]]} k((\beta))
\end{align*}
Here, $\simeq^0$ follows from the definition of Hochschild cohomology, Definition \ref{defn:cohomo}, and Theorem \ref{thm:repn of ev and id}. $\simeq^1$ follows from the fact that Grothendieck duality $\D$ is an equivalence \eqref{eqn:gdcoh} and $\D(\cO_X) = \omega_X$.
$\simeq^2$ follows from Lemma \ref{lem:hocolim}.  Continuing from above, we have the following equivalences: All the tensor products are over $k[[\beta]]$.
\begin{align*}
RHom^{k}_{\Qcoh(X^2)}
\big(\Delta_*\mathcal{O}_X,\ \Delta_*\mathcal{O}_X \big)^{S^1}\otimes k((\beta))&\simeq^3 RHom^{k}_{\Qcoh(X)}
\big(\Delta^*\Delta_*\mathcal{O}_X,\ \mathcal{O}_X \big)^{S^1} \otimes k((\beta))\\ 
&\simeq^4 RHom^{k}_{\Qcoh(X)}
\big(p_*p^*\mathcal{O}_X,\ \mathcal{O}_X \big)^{S^1} \otimes k((\beta))\\
&\simeq^5 RHom^{k}_{\Qcoh(X)}
\big(p_*\mathcal{O}_{\mathcal{L}X},\ \mathcal{O}_X \big)^{S^1} \otimes k((\beta))\\
&\simeq^6 RHom^{k}_{\Qcoh(\mathcal{L} X)}
\big(\mathcal{O}_{\mathcal{L}X}, p^!\mathcal{O}_X \big)^{S^1} \otimes k((\beta))
\end{align*}
Here, $\simeq^3$ follows from adjunction $(\Delta^*,\Delta_*)$ in $\Qcoh$.
$\simeq^4$ follows from base change square for $\mathcal{L}X$ (see \eqref{eqn:loop stack}).
$\simeq^5$  follows from since $p^*\mathcal{O}_X \simeq \mathcal{O}_{\mathcal{L}X}$.
$\simeq^6$  follows from adjunction $(p_*,p^!)$ between $\Qcoh(\mathcal{L}X)$ and $\Qcoh(X)$. Continuing from above, we have the following equivalences: All the tensor products are over $k[[\beta]]$.
\begin{align*}
RHom^{k}_{\Qcoh(\mathcal{L} X)}
\big(\mathcal{O}_{\mathcal{L}X}, p^!\mathcal{O}_X \big)^{S^1} \otimes  k((\beta))&\simeq^7 R\Gamma(\mathcal{L}X, p^!\mathcal{O}_X)^{S^1} \otimes k((\beta)) \\
&\simeq^{8} R\Gamma\big(T[-1]I^{DM}X, 
q^!\mathcal{O}_X\big)^{B{\mathbb{G}_a}}\otimes k((\beta)) \\
&\simeq^{9} R\Gamma(I^{DM}X, \Sym(\mathbb{L}_{I^{DM}X}[1]) \otimes \pi^*\omega_X^{\vee}[-\dim X])^{B{\mathbb{G}_a}}\otimes k((\beta))\\
&\simeq^{10} R\Gamma(I^{DM}X, \Sym(\mathbb{L}^{\vee}_{I^{DM}X}[-1]) \otimes \det N [-c])^{B{\mathbb{G}_a}}\otimes k((\beta))
\end{align*}
Here, $\simeq^{7}$ follows from the identification of internal Hom with derived global sections.
$\simeq^{8}$ follows from identification of $\mathcal{L}X$ with $T[-1]I^{DM}X$ and translation of $S^1$-action to $B\mathbb{G}_a$-action by HKR theorem.
$\simeq^{9}$ follows from the computation of functions on $T[-1]I^{DM}X$ via symmetric algebra of the shifted cotangent complex with twist by $\omega_X$.
$\simeq^{10}$ follows from rewriting using $\mathbb{L}^\vee_{I^{DM}X}$ and expressing the twist via $\det N[-c]$ \cite[Corollary 4.15]{HKR2026}.
\noindent This completes the proof.
\end{proof}

Computing the $B\G_a$ fixed points, we have the following:

\begin{cor}\label{cor:cohomo}
    Let $(X,f)$ be a LG model satisfying the conditions of Theorem \ref{thm:Hochs cohomo main}. Then $HH^{\bullet}(MF^{\mathrm{coh}}(X,f)) $ is equivalent to the following 2-periodic complex over $k$:
    \begin{equation}
    \begin{split}
         R\Gamma\left(\Tot\left( \cdots \to \bigoplus_{i~ {\rm even}}\bigwedge^i\L^{\vee}_{I^{DM}X} \xrightarrow{((-)\wedge df\mid_{I^{DM}X})^{\vee}} \bigoplus_{i~ {\rm odd}}\bigwedge^i\L^{\vee}_{I^{DM}X}\to \cdots  \right) \otimes_k\right. \\ \left.\Tot\left(  \cdots \to \bigoplus_{j~ even}N'^j \xrightarrow{d'}  \bigoplus_{j~ odd}N'^j \to \cdots \right)\right)
    \end{split}
    \end{equation}
    where $d'$ is the sum of differentials $d_{N'}$ of the complex $N'^{\bullet}:=\det N[-c]$ and the differential by which $\epsilon$ acts on $N^{'\bullet}$.
\end{cor}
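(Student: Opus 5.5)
The plan mirrors the proof of Corollary \ref{cor:homo}, applied to the right-hand side of the last line of Theorem \ref{thm:Hochs cohomo main}:
\[
R\Gamma\bigl(I^{DM}X,\ \Sym(\L^{\vee}_{I^{DM}X}[-1])\otimes \det N[-c]\bigr)^{B\G_a}\otimes_{k[[\beta]]}k((\beta)).
\]
The $B\G_a$-action must first be identified on each tensor factor separately. By Remark \ref{BG-a with endo}, a $B\G_a$-action on a complex is the same as a degree $-1$ square-zero endomorphism $\epsilon$. Because the action comes from a group action, it is compatible with the tensor product, so $\epsilon$ acts on the tensor product by the Leibniz-type sum $\epsilon\otimes 1\pm 1\otimes\epsilon$.

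On the polyvector factor $\Sym(\L^{\vee}_{I^{DM}X}[-1])=\bigoplus_i\bigwedge^i\L^{\vee}_{I^{DM}X}[-i]$, I would transport Theorem \ref{lem:dr twist} through duality. In the homology computation, $\epsilon$ acts on $\Sym(\L_{I^{DM}X}[1])$ by $(-)\wedge df|_{I^{DM}X}$. In the cohomology computation, the module $q^!\cO_X$ is dual to the one appearing in homology: $q^!\omega_X\simeq\cO$ by Lemma \ref{clm:q^!}, and $q^!\cO_X=q^!\omega_X\otimes q^*\omega_X^{\vee}$. Hence the induced $\epsilon$ on the dual symmetric algebra is the transpose $((-)\wedge df|_{I^{DM}X})^{\vee}$, that is, contraction with $df$. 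This is the step that needs care, because one must check that the $\Lambda$-module structure produced in Lemma \ref{lem:hocolim} on $RHom(\Delta_*\cO_X,\Delta_*\cO_X)$, namely the commutator with $\epsilon_X$, matches the transpose under Grothendieck duality. My first attempt would be to run the argument of Theorem \ref{lem:dr twist} with the contravariant variable, using that $\D$ is $k[[\beta]]$-linear up to the sign $\beta\mapsto-\beta$ coming from $f\mapsto -f$, as in Theorem \ref{thm:duality}.

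On the second factor $N'^{\bullet}=\det N[-c]$, whatever $\epsilon$-action the identification $\simeq^{10}$ carries, it is by definition the extra differential in $d'$. No computation is needed here; one only records it.

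Next I would compute the homotopy fixed points. As in \eqref{eqn:f1}, for a $\Lambda$-module $\mathfrak D$ one has $\mathfrak D^{B\G_a}\simeq Map_{\Lambda\text{-}\Mod}(k,\mathfrak D)$, and by \cite[eqn. 2.3.49]{MR3877165} this is $(\mathfrak D\otimes_k k[[\beta]],\ d_{\mathfrak D}+\beta\epsilon)$. After inverting $\beta$, with $\deg\beta=2$, the resulting complex is the $\Z/2$-folding of $\mathfrak D$ with total differential $d_{\mathfrak D}+\epsilon$. Applying this to $\mathfrak D=\Sym(\L^\vee[-1])\otimes N'$ gives the tensor product of the two $\Z/2$-graded totalizations, because folding is symmetric monoidal. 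Finally, $R\Gamma$ commutes with these operations: it is exact, it commutes with the countable products needed for $(-)\otimes k((\beta))$ since the complexes are bounded, and it commutes with $B\G_a$-fixed points since both are limits.

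I expect the main obstacle to be the transpose identification of $\epsilon$ on the polyvector factor, in particular fixing signs and checking that the Koszul sign convention makes the total differential square to zero.
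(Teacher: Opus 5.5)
This is essentially the paper's argument. The paper's proof of Corollary~\ref{cor:cohomo} is only the one-line appeal to Theorems~\ref{lem:de Rham action} and~\ref{lem:dr twist} applied to the last line of Theorem~\ref{thm:Hochs cohomo main}, with the fixed-point computation of Corollary~\ref{cor:homo} left implicit; your Leibniz-rule, fixed-point and folding steps spell this out in more detail than the paper does. There are two small corrections.

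First, the transpose is read off most directly from the rewriting $\simeq^{10}$, not from the $\beta\mapsto-\beta$ behaviour of $\D$. On the form $\Sym(\L_{I^{DM}X}[1])\otimes\pi^*\omega_X^{\vee}$ of step $\simeq^{9}$, the action is $((-)\wedge df|_{I^{DM}X})\otimes 1$ by Theorem~\ref{lem:dr twist}, since the twist is pulled back from $X$. Pairing forms against polyvectors then turns multiplication by $df$ into contraction with $df$, i.e.\ $((-)\wedge df|_{I^{DM}X})^{\vee}$.

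Second, your claim that the complexes are bounded is false when $I^{DM}X$ is not smooth, because the degree-$2$ part of $\L^{\vee}_{I^{DM}X}[-1]$ generates a polynomial algebra. To move $R\Gamma$ outside, use instead that $R\Gamma$ preserves limits and, on these quasi-compact Deligne--Mumford stacks in characteristic $0$, filtered colimits. Note also that $(-)\otimes_{k[[\beta]]}k((\beta))$ is a colimit, not a product.
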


\begin{proof}
    The proof follows from \thmref{lem:de Rham action} and \thmref{lem:dr twist} applied to the last step of the equivalences in Theorem \ref{thm:Hochs cohomo main}.
\end{proof}

\subsection{HKR-type theorem for coherent matrix factorization categories of quasi-smooth derived schemes}
Let $(X,f)$ be a LG model where $X$ is a quasi-smooth derived scheme. Then we have $I^{DM}X = X$ by \defref{defn:orbifold inertia} and hence $N=\hocofib(T_{I^{DM}X} \to \pi^*T_X) = 0$. Therefore, as a particular case of Corollary \ref{cor:homo} and \ref{cor:cohomo} we get the following,
\begin{cor}\label{cor:q sm HH}
    We have the following isomorphisms of 2-periodic complexes,
    \begin{equation}
         HH_{\bullet}(MF^{\mathrm{coh}}(X,f)) \xrightarrow{\simeq} R\Gamma\left(\Tot\left( \cdots \to \bigoplus_{i~ {\rm even}}\bigwedge^i\L_{X} \xrightarrow{(-)\wedge df} \bigoplus_{i~ {\rm odd}}\bigwedge^i\L_{X} \to \cdots  \right)\right),
    \end{equation}
    and
     \begin{equation}
         HH^{\bullet}(MF^{\mathrm{coh}}(X,f)) \xrightarrow{\simeq} R\Gamma \left(\Tot\left( \cdots \to \bigoplus_{i~ {\rm even}}\bigwedge^i\L^{\vee}_{X} \xrightarrow{((-)\wedge df\mid_{X})^{\vee}} \bigoplus_{i~ {\rm odd}}\bigwedge^i\L^{\vee}_{X} \to \cdots \right)\right). 
    \end{equation}
\end{cor}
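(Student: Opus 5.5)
The plan is to specialize Corollary \ref{cor:homo} and Corollary \ref{cor:cohomo} to the case where $X$ is a quasi-smooth derived scheme. The only input specific to schemes is the computation of the orbifold inertia. By Definition \ref{defn:orbifold inertia}, $I^{DM}X=\colim_i \Map(BC_i,X)$. For a derived scheme $X$, the stack $BC_i$ is connected, so every map $BC_i\times T\to X$ factors uniquely through $T$. Indeed, $X$ is $0$-truncated as a functor on classical rings, and the cotangent-complex deformation theory of $\Map(BC_i,X)$ at a point is governed by $R\Gamma(BC_i,x^*\mathbb{T}_X)\simeq x^*\mathbb{T}_X$, since in characteristic $0$ the group cohomology of $C_i$ with trivial coefficients vanishes in positive degrees. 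Hence $\Map(BC_i,X)\simeq X$ for all $i$, all transition maps are identities, and $I^{DM}X\simeq X$ with $\pi=p=\mathrm{id}_X$. This is the step I expect to need the most care: one must verify the equivalence on derived points, not merely classical ones, and the characteristic-$0$ vanishing of $H^{>0}(C_i;k)$ is what makes it work.

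Next, for the homology statement, substitute $I^{DM}X=X$ and $df\mid_{I^{DM}X}=df$ into \eqref{f3}. This gives the first isomorphism immediately. The identification of $\Sym(\L_X[1])$ with $\bigoplus_i \bigwedge^i\L_X[i]$ uses the definition of derived exterior powers recalled in Section \ref{sec:loop rotation and BG-a action}, so the collapse to a $2$-periodic complex is exactly as in the proof of Corollary \ref{cor:homo}.

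For the cohomology statement, apply Corollary \ref{cor:cohomo}. The twist there is $N'=\det N[-c]$, where $N=\hocofib(T_{I^{DM}X}\to\pi^*T_X)$ is the normal complex of $I^{DM}X\to X$. Since $\pi=\mathrm{id}$, we get $N=0$ and $c=\rank N=0$, so $\det N\simeq\cO_X$ with zero differential and trivial $\epsilon$-action. The second tensor factor in Corollary \ref{cor:cohomo} then reduces to the unit, namely $\cO_X$ placed $2$-periodically. Tensoring with it does nothing after $\otimes_{k[[\beta]]}k((\beta))$, which leaves the stated complex of $\bigwedge^i\L_X^\vee$ with differential $((-)\wedge df)^\vee$.

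Finally, I would check that the $B\G_a$-actions identified in Theorem \ref{lem:de Rham action} and Theorem \ref{lem:dr twist} specialize compatibly when $I^{DM}X=X$. This is automatic, because those identifications are functorial in $X$ via Theorem \ref{HKR-fu+}.
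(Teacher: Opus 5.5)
Your proposal is correct and follows the paper's route. The paper observes that $I^{DM}X = X$ for a quasi-smooth derived scheme, so $\pi=\mathrm{id}$ and $N=\hocofib(T_{I^{DM}X}\to\pi^*T_X)=0$, and then specializes Corollary~\ref{cor:homo} and Corollary~\ref{cor:cohomo}; your characteristic-$0$ deformation-theoretic argument for $\Map(BC_i,X)\simeq X$ supplies the justification that the paper simply attributes to Definition~\ref{defn:orbifold inertia}.
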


\subsection{Comparison of Preygel's result with other approaches in literature}\label{sec:comparison}
Various authors have proved several HKR-type theorems in the context of LG models. In this section, we discuss the correspondence between the  HKR type isomorphisms for matrix factorization categories of smooth $\DM$ stacks, a result due to Preygel \cite{Preygel:2011} and other approaches in literature. 

 Let $X$ be a smooth affine variety and $G$ be a finite group acting on $X$. 
In \cite[Section 2]{PV-DUKE}, Polishchuk and Vaintrob proved HKR-type isomorphisms for matrix factorization categories corresponding to the LG model $([X/G],f)$. In the same direction, there are works of C\u{a}ld\u{a}raru and Tu in \cite[Section 4]{caldararu-tu} and Segal in \cite{segal-closed-state} for curved $A_{\infty}$-algebras. 
In \cite[Corollary 4.6]{Halpern_Leistner_2020---arxiv--v2} Halpern-Leistner and Pomerleano proved an HKR type of isomorphism for $MF(X,f)$ where $X$ is a smooth $\DM$ stack and $f$ is a flat morphism following ideas developed by Preygel \cite{Preygel:2011}. 
In \cite{ballard-favero-katzarkov-kernel} Ballard, Favero, and Katzarkov proved an equivariant analogue of this.
In \cite[Section 6]{chain-level-hkr-type-map-chern--kim} Chung, Kim and Kim proved chain level HKR type isomorphism for $MF(X,f)$ where $X$ is a smooth separated scheme or global quotient stack obtained by action of a finite group on a smooth separated scheme which generalizes the HKR type isomorphism for smooth affine schemes by Brown and Walker in \cite[Theorem 5.19]{brown-walker-NG}.
A chain level version of this theorem has also been studied by Choa, Kim, and the second author in \cite[Theorem 4.19]{CKS}.

\subsubsection{HKR for matrix factorization categories on smooth schemes and stacks}
 Let $(X,f)$ be a LG model where $X$ is a smooth Deligne-Mumford stack and $f: X\to \A^1$ is a flat morphism.
As $X$ is a smooth $\DM$ stack over a field of characteristic $0$,  $D^bCoh(X) \simeq Perf(X)$. It follows from the \defref{defn:sing cat} that the categories $$Sing^{\infty}(X) = Sing(X) = 0.$$ 
Hence, $MF^{coh, \infty}(X) = Sing^{\infty}(X_0)$ and $MF^{coh}(X) = Sing(X_0)$ where $X_0$ is the zero locus of $f$. Note that as $f$ is flat, the derived pullback in \eqref{eqn:zero-locus} coincides with the classical pullback.
Again, since $X$ is smooth over $k$, $IX$ is also smooth over $k$ and thus  $\L_{IX/k} \simeq \Omega_{IX/k}$, the sheaf of K\"ahler differential of $IX$. So $\bigwedge^i \L_{IX/k} = \bigwedge^i \Omega^1_{IX/k} =: \Omega^i_{IX/k}.$

Let $(X,f)$ be an LG model; where $X=\Spec A$ is a smooth affine scheme over $k$. Then $\mathcal{L}X \simeq \Spec (A \otimes^L_{A\otimes^L A} A)$ and $T[-1]I^{DM}X = \Spec \Sym(\Omega^{\bullet}_{A/k}[1]).$ Thus, by \cite[Proposition 5.6]{HKR2026} we have the following commutative diagram of commutative $A$-algebras:
\begin{equation}\label{eqn:affinization for affine}
    \begin{tikzcd}[cramped]
	{\Sym(\Omega^{\bullet}_{A/k}[1])} && {A\otimes^L_{A\otimes^L A}A} \\
	& A
	\arrow["{R\Gamma(\wh{aff}^*)}"', from=1-3, to=1-1]
	\arrow["{\alpha'}", from=2-2, to=1-1]
	\arrow["\alpha"', from=2-2, to=1-3]
\end{tikzcd}
\end{equation}
where $\alpha$ and $\alpha'$ are structure morphisms defined by $\alpha'(a) = d_{dR}(a) \in \Omega^1_{A/k} \hookrightarrow \Sym(\Omega^{\bullet}_{A/k}[1])$ and $\alpha(a) = a \in A \hookrightarrow A\otimes^L_{A\otimes^L A}A $ respectively. Therefore, 
\begin{equation}\label{eqn:image of r gamma}
    R\Gamma(\wh{aff}^*)(a) = d_{dR}(a).
\end{equation}
Since the horizontal equivalence $R\Gamma(\wh{aff}^*)$ in \eqref{eqn:affinization for affine} is equivariant with respect to the  $S^1 \times X$ action over $X$, it preserves the algebra structure on the domain and co-domain \cite[Proposition 5.6]{HKR2026}. The graded commutative algebra structure on $HH_{\bullet} = A \otimes^L_{A\otimes^L A} A$ is given by \emph{shuffle product} \cite[Cor. 4.2.7]{loday_cyclic_homology}, and the graded commutative algebra structure on target of $R\Gamma(\wh{aff}^*)$ is given by symmetric algebra structure. Since $R\Gamma(\wh{aff}^*)$ is an isomorphism by \cite[Section 4.2.9]{loday_cyclic_homology} we get $(R\Gamma(\wh{aff}^*))^{-1}$  is the same as the anti-symmetrization map (see \cite[Section 1.3.4]{loday_cyclic_homology}).
Now, the $S^1$ action on $HH_{\bullet}(A)$ induced from the potential is given by the connes operator (a degree $-1$ differential denoted by $B_w$ in \cite[Section 4]{platt}, see also \cite{platt2012cherncharacterglobalmatrix}) on \emph{curved complete bar complex} denoted by $\wh{\mathfrak{B}ar}_{\wt f} \in MF(X\times X, f \boxplus -f)$ \cite[Section 4]{platt}, \cite[Section 1]{yekutieli} where $\wt{f}:= f\boxplus -f$. By \cite[Lemma 4.2]{platt}, we have $\wh{\mathfrak{B}ar}_f \simeq \Delta_*(\cO_X)$ where $\cO_X$ is seen as a factorization of $0:X\to \A^1$ as in Section \ref{sec:o_x as facto}.
Now, $\Delta^*\wh{\mathfrak{B}ar}_f$ is given by totalization of the double complex in \cite[Equation (2), pg. 14]{platt}.
By multiplicativity of $R\Gamma(\wh{aff}^*)$, $B_f$ corresponds to the degree $-1$ differential $(-)\wedge d_{dR}f$ on $\Sym(\Omega^{\bullet}_{A/k}[1])$. 
Hence, by multiplicativity of $R\Gamma(\wh{aff}^*)$ and \eqref{eqn:image of r gamma} the isomorphism in \eqref{cor:homo} reduces to the following form:
\[R\Gamma \Delta^*\wh{\mathfrak{B}ar}_f \xrightarrow{\simeq} R\Gamma \left( \cdots \to \bigoplus_{i~ {\rm even}}\Omega^i_{X} \xrightarrow{(-)\wedge df} \bigoplus_{i~ {\rm odd}}\Omega^i_{X} \to\cdots  \right)\]
\begin{equation}\label{eqn:hkr local form}
    a_0[a_1|a_2|\cdots|a_n] \mapsto \frac{1}{n!} a_0 d_{dR}f \wedge d_{dR}(a_1)\wedge d_{dR}(a_2)\wedge \cdots \wedge d_{dR}(a_n)
\end{equation}
which is  as in \cite[pg. 326]{caldararu-tu}, \cite[Proposition 8.2.4]{Preygel:2011} where $a_0[a_1|a_2|\cdots|a_n]$ denotes an $n$-chain of the bar resolution.
So by the above discussion, we get the algebra homomorphism $R\Gamma(\wh{aff}^*)$ takes the form \eqref{eqn:hkr local form} on the $n$-chains of the Hochschild chain complex.

Now, let $G$ be a finite group acting on a smooth affine scheme $X=\Spec A$. Consider the quotient stack $\sX = [X/G]$. Let $f:\sX \to \A^1$ be a flat morphism. 
Then the inertia stack has the following decomposition \cite[Proposition 6.5]{HKR2026}, \cite[Section 3]{vistoli-gw-th}:
\begin{equation}
    I^{DM}\sX \simeq \bigsqcup_{r=1}^{\infty}I_{\mu_r}\sX
\end{equation}
where $I_{\mu_r}\sX$ be the $r$-th cyclotomic inertia of $\sX$.
Consider the following commutative diagram with the following notations:
$\underline{\ol{MC}}^{II}(-)$ denotes the mixed Hochschild complex of second kind, $\mathfrak{tw}_{\chi}$ denotes the twisting automorphism corresponding to $\chi \in \wh\mu_r$, $nat$ denotes the natural map. For details please see \cite[Section 2.1 \& 4.7]{CKS}.  Note for ${(aff)}^*$ the twisting action corresponds to the action described in the twisted sector by the local system with monodromy given by the character $\chi$ (see \cite[Proposition 4.10]{HKR2026}).
 The unique upward vertical map is given by quasi-Morita invariance \cite[Section 2.2.2]{CKS}. 
The arrow with ``$HKR$'' label is the HKR type isomorphism of \cite[Section 3.3]{segal-closed-state} and \cite[Section 6]{caldararu-tu}.

\[\begin{tikzcd}[cramped]
	{\ol{MC}(MF(X,f))} & {R\Gamma\underline{\ol{MC}}^{II}(qMF(\sX,f))} & {\oplus_{r, \chi}\underline{\ol{MC}}^{II}(qMF^{\chi}(I_{\mu_r}\sX,f\mid_{I_{\mu_r}\sX}))} \\
	&& {\oplus_{r, \chi}\underline{\ol{MC}}^{II}(qMF^{\chi}(I_{\mu_r}\sX,f\mid_{I_{\mu_r}\sX}))} \\
	&& {\oplus_rR\Gamma(I_{\mu_r}\sX, \underline{\ol{MC}}^{II}(qMF_{I_{\mu_r}\sX,f\mid_{I_{\mu_r}\sX}}(-)))} \\
	{R\Gamma(\Omega^{\bullet}_{I\sX},df\mid_{I\sX},ud)} && {R\Gamma(\underline{\ol{MC}}^{II}(\cO_{I\sX},f\mid_{I\sX}))}
	\arrow["\simeq", from=1-1, to=1-2]
	\arrow["{R\Gamma(\wh{(aff)}^*)}"', from=1-1, to=4-1]
	\arrow["pullback", from=1-2, to=1-3]
	\arrow["{\oplus \mathfrak{tw}_{\chi}}", from=1-3, to=2-3]
	\arrow["nat", from=2-3, to=3-3]
	\arrow["\simeq"', from=4-3, to=3-3]
	\arrow["{HKR}", from=4-3, to=4-1]
\end{tikzcd}\]
By the discussion above and \cite[Theorem 4.19]{CKS}, \cite[Section 6]{caldararu-tu} and \cite[Section 3.3]{segal-closed-state}, it follows that the above diagram commutes. 

We summarize the discussion in the following corollary: 
\begin{cor}
Let $(X,f)$ be an LG model where we assume that  $X$ is a smooth $\DM$-stack and $f:X\to \A^1$ is flat. Then,
    \begin{equation}\label{eqn:smooth stack hh}
    HH_{\bullet}(MF(X,f)) \xrightarrow{\simeq} R\Gamma\left( \cdots \to \bigoplus_{i~ {\rm even}}\Omega^i_{IX} \xrightarrow{(-)\wedge df} \bigoplus_{i~ {\rm odd}}\Omega^i_{IX} \to\cdots  \right),
\end{equation}
and
     \begin{equation}
         HH^{\bullet}(MF(X,f)) \xrightarrow{\simeq} R\Gamma\left( \cdots \to \bigoplus_{i~ {\rm even}}\Omega^{i\, \vee}_{IX} \xrightarrow{((-)\wedge df\mid_{IX})^{\vee}} \bigoplus_{i~ {\rm odd}}\Omega^{i\, \vee}_{IX}\to \cdots  \right) \otimes R\Gamma\left(k[\beta] \wh\otimes \det N[-c]\right) 
    \end{equation}
    and the isomorphism \eqref{eqn:smooth stack hh} coincides with the isomorphism of \cite[Theorem 4.19]{CKS} under the identification of the singularity category with the matrix factorization category by  Orlov's equivalence.
\end{cor}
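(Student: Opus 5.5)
The plan is to specialize the general results already established — Corollary \ref{cor:homo} and Corollary \ref{cor:cohomo} — to the smooth $\DM$ setting, and then compare the resulting map with the chain-level constructions in the literature. The argument has three steps.

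\emph{Step 1: identify the categories.} Since $X$ is a smooth $\DM$ stack in characteristic $0$, we have $D^bCoh(X)\simeq Perf(X)$, so $\langle i^*D^bCoh(X)\rangle=\langle i^*Perf(X)\rangle$. Because $f$ is flat, $X_0$ is the classical zero locus, and $i^*Perf(X)$ generates $Perf(X_0)$ up to idempotents. The lemma following Lemma \ref{lem:beta torsion} then shows that $MF^{coh}(X,f)$ is the idempotent completion of $D^bCoh(X_0)/Perf(X_0)=Sing(X_0)$. By Proposition \ref{prop. orlov}, this is exactly $MF(X,f)$. Hochschild homology and cohomology are Morita invariant, hence insensitive to idempotent completion. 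Therefore
\[HH_\bullet(MF(X,f))\simeq HH_\bullet(MF^{coh}(X,f)),\qquad HH^\bullet(MF(X,f))\simeq HH^\bullet(MF^{coh}(X,f)).\]
We also check Assumption \ref{assumption}. The map $f^\times$ is a flat morphism between smooth stacks, but that alone does not make it smooth. If the critical locus of $f$ lies in $X_0$ (for instance when $X_0$ carries all singular fibers), then $f^\times$ is smooth. Otherwise I would use that $MF$ only depends on a neighborhood of $X_0$: shrink $X$ to the open substack where $f^\times$ is smooth, which does not change $MF(X,f)$. This reduction is the first point needing care.

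\emph{Step 2: simplify the right-hand sides.} For $X$ smooth, $I^{DM}X=IX$ is a disjoint union of smooth components, so $\L_{IX}\simeq\Omega^1_{IX}$ and $\bigwedge^i\L_{IX}=\Omega^i_{IX}$. Substituting this into Corollary \ref{cor:homo} gives \eqref{eqn:smooth stack hh}. For cohomology, $\omega_X$ is a line bundle placed in degree $\dim X$. The factor $\pi^*\omega_X^\vee[-\dim X]$ then combines, via \cite[Corollary 4.15]{HKR2026}, into the twist by the determinant of the normal bundle $N$ of $IX\to X$, shifted by the codimension $c$. This is the same rewriting as step $\simeq^{10}$ in the proof of Theorem \ref{thm:Hochs cohomo main}. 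Since $N$ is now a vector bundle in a single degree, the $B\G_a$-fixed points split as the dual twisted Hodge complex tensored with $k[\beta]\wh\otimes\det N[-c]$, exactly as in Corollary \ref{cor:cohomo}.

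\emph{Step 3: compatibility with \cite[Theorem 4.19]{CKS}.} This is the main obstacle. First we work affine-locally on $X=\Spec A$. From diagram \eqref{eqn:affinization for affine}, multiplicativity of $R\Gamma(\wh{aff}^*)$ gives \eqref{eqn:image of r gamma}. We then identify the inverse of $R\Gamma(\wh{aff}^*)$ with antisymmetrization, using the shuffle product on $A\otimes^L_{A\otimes A}A$. The isomorphism therefore takes the explicit form \eqref{eqn:hkr local form}, with the Connes-type operator $B_f$ on Platt's curved bar complex going to $(-)\wedge df$. Next we pass to $[X/G]$, using the decomposition of $I^{DM}\sX$ into cyclotomic inertia components. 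The main difficulty is matching the twisting automorphisms $\mathfrak{tw}_\chi$ on the CKS side with the monodromy local systems on the twisted sectors from \cite[Proposition 4.10]{HKR2026}. My approach is to check commutativity of the large diagram one summand $(r,\chi)$ at a time. On each summand I would reduce to the untwisted affine case, pulling back along the \'etale chart $I_{\mu_r}X\to I_{\mu_r}\sX$ and invoking the quasi-Morita invariance \cite[Section 2.2.2]{CKS}. Finally, the local identifications glue by the functoriality of Theorem \ref{HKR-fu+} in $X$.
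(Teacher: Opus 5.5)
Your Steps 1--3 follow the paper's own route. You identify $MF^{coh}(X,f)$ with $MF(X,f)$; with the paper's definitions this is even tautological, since $PreMF(X,f)$ and $PreMF^{coh}(X,f)$ both have underlying category $D^bCoh(X_0)$. You then replace $\L_{IX}$ by $\Omega^1_{IX}$ in Corollaries \ref{cor:homo} and \ref{cor:cohomo}. Finally you compare with \cite[Theorem 4.19]{CKS} through the affine formula \eqref{eqn:hkr local form} and the cyclotomic-inertia diagram for $[\Spec A/G]$. There is a small slip in Step 3: the \'etale chart of $I_{\mu_r}[\Spec A/G]$ is $\bigsqcup_{\mathrm{ord}(g)=r}(\Spec A)^g$, whereas $I_{\mu_r}$ of the scheme $\Spec A$ is empty for $r>1$.

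The genuine problem is in the one place where you go beyond the paper: your attempt to dispense with Assumption \ref{assumption} by shrinking $X$ to the open substack $U$ on which $f^{\times}$ is smooth.

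\begin{itemize}
\item Shrinking leaves the left-hand side unchanged, but not the right-hand side. The two-periodic complex $(\Omega^{\bullet}_{IX},(-)\wedge df)$ has cohomology sheaves supported on, and in general nonzero along, the whole critical locus of $f$. So $R\Gamma(IX,-)$ also records the critical points with nonzero critical value, which $R\Gamma(IU,-)$ discards. Your reduction therefore proves the formula with $IU$ in place of $IX$, which is not the statement.
\item In fact the statement is false without the assumption. Take $X=\A^1$ and $f=x^2-1$. The zero fiber is two reduced points, so $Sing(X_0)=0$ and $HH_{\bullet}(MF(X,f))=0$. The right-hand side is the two-periodization of $k[x]\xrightarrow{2x}k[x]$, whose cohomology is $k$ in odd degrees.
\item The correct way to handle this is to note that Assumption \ref{assumption} is a standing hypothesis, imposed ``for the rest of the article'' in Section \ref{sec:duality}. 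For smooth $X$ it says exactly that the critical locus of $f$ lies in $X_0$. It must be assumed; it cannot be reduced to.
\end{itemize}
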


\printbibliography

@article{swan1996hochschild,
  title={Hochschild cohomology of quasiprojective schemes},
  author={Swan, Richard G},
  journal={Journal of Pure and Applied Algebra},
  volume={110},
  number={1},
  pages={57--80},
  year={1996},
  publisher={Elsevier}
}

@article{Ben_Zvi_2012,
	author = {Ben-Zvi, David and Nadler, David},
	doi = {10.1112/jtopol/jts007},
	issn = {1753-8416},
	journal = {Journal of Topology},
	month = mar,
	number = {2},
	pages = {377--430},
	publisher = {Wiley},
	title = {Loop spaces and connections},
	url = {http://dx.doi.org/10.1112/jtopol/jts007},
	volume = {5},
	year = {2012}}

@article{Preygel:2011,
	author = {Anatoly Preygel},
	eprint = {1101.5834},
	month = {01},
	title = {Thom-Sebastiani \& Duality for Matrix Factorizations},
	url = {https://arxiv.org/pdf/1101.5834.pdf},
	year = {2011}}

@book {HTT,
    shorthand = {HTT},
    AUTHOR = {Lurie, Jacob},
     TITLE = {Higher topos theory},
    SERIES = {Annals of Mathematics Studies},
    VOLUME = {170},
 PUBLISHER = {Princeton University Press, Princeton, NJ},
      YEAR = {2009},
     PAGES = {xviii+925},
      ISBN = {978-0-691-14049-0},
   MRCLASS = {18-02 (18B25 18E35 18G30 18G55 55U40)},
  MRNUMBER = {2522659},
MRREVIEWER = {Mark\ Hovey},
       DOI = {10.1515/9781400830558},
       URL = {https://doi.org/10.1515/9781400830558}
}

@article {MR3877165,
    AUTHOR = {Blanc, Anthony and Robalo, Marco and To\"en, Bertrand and
              Vezzosi, Gabriele},
     TITLE = {Motivic realizations of singularity categories and vanishing
              cycles},
   JOURNAL = {J. \'Ec. polytech. Math.},
  FJOURNAL = {Journal de l'\'Ecole polytechnique. Math\'ematiques},
    VOLUME = {5},
      YEAR = {2018},
     PAGES = {651--747},
      ISSN = {2429-7100,2270-518X},
   MRCLASS = {14F42 (14A22 14F05 16S38 18D10 18E30 19E08 32S30)},
  MRNUMBER = {3877165},
MRREVIEWER = {Jian\ Min\ Chen},
       DOI = {10.5802/jep.81},
       URL = {https://doi.org/10.5802/jep.81},
}

@article {homotopy-th-of-dg-cats-Tabuada,
    AUTHOR = {Tabuada, Gon\c calo},
     TITLE = {Homotopy theory of dg categories via localizing pairs and
              {D}rinfeld's dg quotient},
   JOURNAL = {Homology Homotopy Appl.},
  FJOURNAL = {Homology, Homotopy and Applications},
    VOLUME = {12},
      YEAR = {2010},
    NUMBER = {1},
     PAGES = {187--219},
      ISSN = {1532-0073,1532-0081},
   MRCLASS = {18D20 (18G55)},
  MRNUMBER = {2607415},
MRREVIEWER = {Philippe\ Gaucher},
       DOI = {10.4310/hha.2010.v12.n1.a11},
       URL = {https://doi.org/10.4310/hha.2010.v12.n1.a11},
}

@article {chen,
    AUTHOR = {Chen, Harrison},
     TITLE = {Equivariant localization and completion in cyclic homology and
              derived loop spaces},
   JOURNAL = {Adv. Math.},
  FJOURNAL = {Advances in Mathematics},
    VOLUME = {364},
      YEAR = {2020},
     PAGES = {107005, 56},
      ISSN = {0001-8708,1090-2082},
   MRCLASS = {14F08 (14A20 55P35)},
  MRNUMBER = {4060042},
MRREVIEWER = {Sadok\ Kallel},
       DOI = {10.1016/j.aim.2020.107005},
       URL = {https://doi.org/10.1016/j.aim.2020.107005},
}

@misc{HA,
    shorthand = {HA},
  author       = {Jacob Lurie},
  title        = {Higher algebra},
  howpublished = {\url{http://www.math.harvard.edu/~lurie/}},
  year         = {2017},
}

@article {pippi,
    AUTHOR = {Pippi, Massimo},
     TITLE = {On some (co)homological invariants of coherent matrix
              factorizations},
   JOURNAL = {J. Noncommut. Geom.},
  FJOURNAL = {Journal of Noncommutative Geometry},
    VOLUME = {17},
      YEAR = {2023},
    NUMBER = {4},
     PAGES = {1299--1334},
      ISSN = {1661-6952,1661-6960},
   MRCLASS = {14F08 (32S30)},
  MRNUMBER = {4653786},
MRREVIEWER = {Amin\ Gholampour},
       DOI = {10.4171/jncg/515},
       URL = {https://doi.org/10.4171/jncg/515},
}

@misc{sag,
  author       = {Jacob Lurie},
  title        = {Spectral Algebraic Geometry},
  howpublished = {\url{https://www.math.ias.edu/~lurie/papers/SAG-rootfile.pdf}},
  year         = {2017},
  label = {SAG},
}

@misc{HKR2026,
      title={Hochschild-Kostant-Rosenberg isomorphism for derived Deligne-Mumford stacks}, 
      author={Lie Fu and Mauro Porta and Sarah Scherotzke and Nicolò Sibilla},
      year={2026},
      eprint={2509.00501},
      archivePrefix={arXiv},
      primaryClass={math.AG},
      note={\url{https://arxiv.org/abs/2509.00501}}, 
}

@article {Toen_Atiyah-class,
    AUTHOR = {Sch\"urg, Timo and To\"en, Bertrand and Vezzosi, Gabriele},
     TITLE = {Derived algebraic geometry, determinants of perfect complexes,
              and applications to obstruction theories for maps and
              complexes},
   JOURNAL = {J. Reine Angew. Math.},
  FJOURNAL = {Journal f\"ur die Reine und Angewandte Mathematik. [Crelle's
              Journal]},
    VOLUME = {702},
      YEAR = {2015},
     PAGES = {1--40},
      ISSN = {0075-4102,1435-5345},
   MRCLASS = {14D20 (14F05)},
  MRNUMBER = {3341464},
MRREVIEWER = {Nicolas\ Perrin},
       DOI = {10.1515/crelle-2013-0037},
       URL = {https://doi.org/10.1515/crelle-2013-0037},
}

@article {HAG_2,
    AUTHOR = {To\"en, Bertrand and Vezzosi, Gabriele},
     TITLE = {Homotopical algebraic geometry. {II}. {G}eometric stacks and
              applications},
   JOURNAL = {Mem. Amer. Math. Soc.},
  FJOURNAL = {Memoirs of the American Mathematical Society},
    VOLUME = {193},
      YEAR = {2008},
    NUMBER = {902},
     PAGES = {x+224},
      ISSN = {0065-9266,1947-6221},
   MRCLASS = {14A20 (18F10 18F20 18G55 55P42 55U40)},
  MRNUMBER = {2394633},
MRREVIEWER = {Paul\ Arne\ \O stv\ae r},
       DOI = {10.1090/memo/0902},
       URL = {https://doi.org/10.1090/memo/0902},
}

@article {Shifted_cotangent,
    AUTHOR = {Calaque, Damien},
     TITLE = {Shifted cotangent stacks are shifted symplectic},
   JOURNAL = {Ann. Fac. Sci. Toulouse Math. (6)},
  FJOURNAL = {Annales de la Facult\'e{} des Sciences de Toulouse.
              Math\'ematiques. S\'erie 6},
    VOLUME = {28},
      YEAR = {2019},
    NUMBER = {1},
     PAGES = {67--90},
      ISSN = {0240-2963,2258-7519},
   MRCLASS = {14A20 (14F05)},
  MRNUMBER = {3940792},
MRREVIEWER = {Feng\ Qu},
       DOI = {10.5802/afst.1593},
       URL = {https://doi.org/10.5802/afst.1593},
}

@article {pippi_HS,
    AUTHOR = {Pippi, Massimo},
     TITLE = {On the structure of dg categories of relative singularities},
   JOURNAL = {High. Struct.},
  FJOURNAL = {Higher Structures},
    VOLUME = {6},
      YEAR = {2022},
    NUMBER = {1},
     PAGES = {375--402},
      ISSN = {2209-0606},
   MRCLASS = {14F08 (14A22 14B05 18G80)},
  MRNUMBER = {4456599},
MRREVIEWER = {Zoran\ \v Skoda},
}

@article {HLP_hodge,
    AUTHOR = {Halpern-Leistner, Daniel and Pomerleano, Daniel},
     TITLE = {Equivariant {H}odge theory and noncommutative geometry},
   JOURNAL = {Geom. Topol.},
  FJOURNAL = {Geometry \& Topology},
    VOLUME = {24},
      YEAR = {2020},
    NUMBER = {5},
     PAGES = {2361--2433},
      ISSN = {1465-3060,1364-0380},
   MRCLASS = {14A22 (14C30 14F08 19D55 19L47)},
  MRNUMBER = {4194295},
MRREVIEWER = {Pieter\ Belmans},
       DOI = {10.2140/gt.2020.24.2361},
       URL = {https://doi.org/10.2140/gt.2020.24.2361},
}

@book {loday_cyclic_homology,
    AUTHOR = {Loday, Jean-Louis},
     TITLE = {Cyclic homology},
    SERIES = {Grundlehren der mathematischen Wissenschaften [Fundamental
              Principles of Mathematical Sciences]},
    VOLUME = {301},
      NOTE = {Appendix E by Mar\'ia O. Ronco},
 PUBLISHER = {Springer-Verlag, Berlin},
      YEAR = {1992},
     PAGES = {xviii+454},
      ISBN = {3-540-53339-7},
   MRCLASS = {19D55 (17B56 18E25 55N91)},
  MRNUMBER = {1217970},
MRREVIEWER = {Jerry\ Lodder},
       DOI = {10.1007/978-3-662-21739-9},
       URL = {https://doi.org/10.1007/978-3-662-21739-9},
}

@article {TV_multi_hkr,
    AUTHOR = {To\"en, Bertrand and Vezzosi, Gabriele},
     TITLE = {Alg\`ebres simpliciales {$S^1$}-\'equivariantes, th\'eorie de
              de {R}ham et th\'eor\`emes {HKR} multiplicatifs},
   JOURNAL = {Compos. Math.},
  FJOURNAL = {Compositio Mathematica},
    VOLUME = {147},
      YEAR = {2011},
    NUMBER = {6},
     PAGES = {1979--2000},
      ISSN = {0010-437X,1570-5846},
   MRCLASS = {18G55 (14F40 16E45 18G30 55U10)},
  MRNUMBER = {2862069},
MRREVIEWER = {Kenneth\ A.\ Brown},
       DOI = {10.1112/S0010437X11005501},
       URL = {https://doi.org/10.1112/S0010437X11005501},
}

@article {gaitsgory,
    AUTHOR = {Drinfeld, Vladimir and Gaitsgory, Dennis},
     TITLE = {On some finiteness questions for algebraic stacks},
   JOURNAL = {Geom. Funct. Anal.},
  FJOURNAL = {Geometric and Functional Analysis},
    VOLUME = {23},
      YEAR = {2013},
    NUMBER = {1},
     PAGES = {149--294},
      ISSN = {1016-443X,1420-8970},
   MRCLASS = {14A20 (14F05 14F10 18Dxx)},
  MRNUMBER = {3037900},
MRREVIEWER = {Pawel\ Sosna},
       DOI = {10.1007/s00039-012-0204-5},
       URL = {https://doi.org/10.1007/s00039-012-0204-5},
}

@article {Orlov-mf,
    AUTHOR = {Orlov, Dmitri},
     TITLE = {Matrix factorizations for nonaffine {LG}-models},
   JOURNAL = {Math. Ann.},
  FJOURNAL = {Mathematische Annalen},
    VOLUME = {353},
      YEAR = {2012},
    NUMBER = {1},
     PAGES = {95--108},
      ISSN = {0025-5831,1432-1807},
   MRCLASS = {14F05 (18E30)},
  MRNUMBER = {2910782},
MRREVIEWER = {Pawel\ Sosna},
       DOI = {10.1007/s00208-011-0676-x},
       URL = {https://doi.org/10.1007/s00208-011-0676-x},
}

@article {CKS,
    AUTHOR = {Choa, Dongwook and Kim, Bumsig and Sreedhar, Bhamidi},
     TITLE = {Riemann-{R}och for stacky matrix factorizations},
   JOURNAL = {Forum Math. Sigma},
  FJOURNAL = {Forum of Mathematics. Sigma},
    VOLUME = {10},
      YEAR = {2022},
     PAGES = {Paper No. e108, 29},
      ISSN = {2050-5094},
   MRCLASS = {14A22 (14A20 16E40 18G80)},
  MRNUMBER = {4519062},
MRREVIEWER = {Ji-Wei\ He},
       DOI = {10.1017/fms.2022.99},
       URL = {https://doi.org/10.1017/fms.2022.99},
}

@article {champes-affine,
    AUTHOR = {To\"en, Bertrand},
     TITLE = {Champs affines},
   JOURNAL = {Selecta Math. (N.S.)},
  FJOURNAL = {Selecta Mathematica. New Series},
    VOLUME = {12},
      YEAR = {2006},
    NUMBER = {1},
     PAGES = {39--135},
      ISSN = {1022-1824,1420-9020},
   MRCLASS = {14F35 (14A20 18F10 55U35)},
  MRNUMBER = {2244263},
MRREVIEWER = {Mark\ Hovey},
       DOI = {10.1007/s00029-006-0019-z},
       URL = {https://doi.org/10.1007/s00029-006-0019-z},
}

@book {Gaitsgory-book-1,
    AUTHOR = {Gaitsgory, Dennis and Rozenblyum, Nick},
     TITLE = {A study in derived algebraic geometry. {V}ol. {I}.
              {C}orrespondences and duality},
    SERIES = {Mathematical Surveys and Monographs},
    VOLUME = {221},
 PUBLISHER = {American Mathematical Society, Providence, RI},
      YEAR = {2017},
     PAGES = {xl+533},
      ISBN = {978-1-4704-3569-1},
   MRCLASS = {14F05 (18D05 18G55)},
  MRNUMBER = {3701352},
MRREVIEWER = {Adrian\ Langer},
       DOI = {10.1090/surv/221.1},
       URL = {https://doi.org/10.1090/surv/221.1},
}

@incollection {khan_derived-geom,
    AUTHOR = {Khan, Adeel A.},
     TITLE = {An introduction to derived algebraic geometry},
 BOOKTITLE = {Moduli spaces, virtual invariants and shifted symplectic
              structures},
    SERIES = {KIAS Springer Ser. Math.},
    VOLUME = {4},
     PAGES = {1--35},
 PUBLISHER = {Springer, Singapore},
      YEAR = {2025},
      ISBN = {978-981-97-8248-2},
   MRCLASS = {14A30 (14D23 14F08)},
  MRNUMBER = {4898354},
       DOI = {10.1007/978-981-97-8249-9\_1},
       URL = {https://doi.org/10.1007/978-981-97-8249-9_1},
}

@article {vit_khan,
    AUTHOR = {Khan, Adeel A.},
     TITLE = {Virtual excess intersection theory},
   JOURNAL = {Ann. K-Theory},
  FJOURNAL = {Annals of K-Theory},
    VOLUME = {6},
      YEAR = {2021},
    NUMBER = {3},
     PAGES = {559--570},
      ISSN = {2379-1683,2379-1691},
   MRCLASS = {14C35 (14A20 14A30 14C17 19Exx)},
  MRNUMBER = {4310329},
MRREVIEWER = {Gabriel\ Angelini-Knoll},
       DOI = {10.2140/akt.2021.6.559},
       URL = {https://doi.org/10.2140/akt.2021.6.559},
}

@article {khan-ravi-cohomo-alg-stacks,
    AUTHOR = {Khan, Adeel A. and Ravi, Charanya},
     TITLE = {Generalized cohomology theories for algebraic stacks},
   JOURNAL = {Adv. Math.},
  FJOURNAL = {Advances in Mathematics},
    VOLUME = {458},
      YEAR = {2024},
     PAGES = {Paper No. 109975, 104},
      ISSN = {0001-8708,1090-2082},
   MRCLASS = {14A20 (14C15 14F42 19E08 55N20)},
  MRNUMBER = {4811546},
MRREVIEWER = {Bj\o rn\ Ian\ Dundas},
       DOI = {10.1016/j.aim.2024.109975},
       URL = {https://doi.org/10.1016/j.aim.2024.109975},
}

@article {khan-ravi-prep,
    AUTHOR = {Khan, Adeel A. and Ravi, Charanya},
     TITLE = {In preparation},
  
      YEAR = {2026},
     
}

@article {derived-morita-theory,
    AUTHOR = {To\"en, Bertrand},
     TITLE = {The homotopy theory of {$dg$}-categories and derived {M}orita
              theory},
   JOURNAL = {Invent. Math.},
  FJOURNAL = {Inventiones Mathematicae},
    VOLUME = {167},
      YEAR = {2007},
    NUMBER = {3},
     PAGES = {615--667},
      ISSN = {0020-9910,1432-1297},
   MRCLASS = {18D05 (18E30 18G55 19D55)},
  MRNUMBER = {2276263},
MRREVIEWER = {Mark\ Hovey},
       DOI = {10.1007/s00222-006-0025-y},
       URL = {https://doi.org/10.1007/s00222-006-0025-y},
}

@article {kth-gth-khan,
    AUTHOR = {Khan, Adeel A.},
     TITLE = {K-theory and {G}-theory of derived algebraic stacks},
   JOURNAL = {Jpn. J. Math.},
  FJOURNAL = {Japanese Journal of Mathematics},
    VOLUME = {17},
      YEAR = {2022},
    NUMBER = {1},
     PAGES = {1--61},
      ISSN = {0289-2316,1861-3624},
   MRCLASS = {19E08 (14A20 14A30)},
  MRNUMBER = {4397935},
       DOI = {10.1007/s11537-021-2110-9},
       URL = {https://doi.org/10.1007/s11537-021-2110-9},
}

@misc{hlp-theta-strata,
      title={Derived $\Theta$-stratifications and the $D$-equivalence conjecture}, 
      author={Daniel Halpern-Leistner},
      year={2021},
      eprint={2010.01127},
      archivePrefix={arXiv},
      primaryClass={math.AG},
      note={\url{https://arxiv.org/abs/2010.01127}}, 
}

@article {singular-support,
    AUTHOR = {Arinkin, D. and Gaitsgory, D.},
     TITLE = {Singular support of coherent sheaves and the geometric
              {L}anglands conjecture},
   JOURNAL = {Selecta Math. (N.S.)},
  FJOURNAL = {Selecta Mathematica. New Series},
    VOLUME = {21},
      YEAR = {2015},
    NUMBER = {1},
     PAGES = {1--199},
      ISSN = {1022-1824,1420-9020},
   MRCLASS = {14D24 (14A20 14F05 22E57)},
  MRNUMBER = {3300415},
MRREVIEWER = {Richard\ P.\ Thomas},
       DOI = {10.1007/s00029-014-0167-5},
       URL = {https://doi.org/10.1007/s00029-014-0167-5},
}

@misc{indcoherentsheaves,
      title={Ind-coherent sheaves}, 
      author={Dennis Gaitsgory},
      year={2012},
      eprint={1105.4857},
      archivePrefix={arXiv},
      primaryClass={math.AG},
      note={\url{https://arxiv.org/abs/1105.4857}}, 
}

@incollection {quillen,
    AUTHOR = {Quillen, Daniel},
     TITLE = {Higher algebraic {$K$}-theory: {I} [MR0338129]},
 BOOKTITLE = {Cohomology of groups and algebraic {$K$}-theory},
    SERIES = {Adv. Lect. Math. (ALM)},
    VOLUME = {12},
     PAGES = {413--478},
 PUBLISHER = {Int. Press, Somerville, MA},
      YEAR = {2010},
      ISBN = {978-1-57146-144-5},
   MRCLASS = {19Dxx},
  MRNUMBER = {2655184},
}

@misc{devissage-algebraic-k-theory-small,
      title={D\'evissage for Algebraic K-theory of Small Stable $\infty$-categories}, 
      author={Chunhui Wei},
      year={2026},
      eprint={2601.14626},
      archivePrefix={arXiv},
      primaryClass={math.KT},
      note ={\url{https://arxiv.org/abs/2601.14626}}, 
}

@article {det-heleodoro,
    AUTHOR = {Heleodoro, Aron},
     TITLE = {Determinant map for the prestack of {T}ate objects},
   JOURNAL = {Selecta Math. (N.S.)},
  FJOURNAL = {Selecta Mathematica. New Series},
    VOLUME = {26},
      YEAR = {2020},
    NUMBER = {5},
     PAGES = {Paper No. 76, 57},
      ISSN = {1022-1824,1420-9020},
   MRCLASS = {14A20 (14A30 18G99 22E67)},
  MRNUMBER = {4172986},
MRREVIEWER = {Federico\ Scavia},
       DOI = {10.1007/s00029-020-00604-3},
       URL = {https://doi.org/10.1007/s00029-020-00604-3},
}

@article {k-th-tabuada,
    AUTHOR = {Blumberg, Andrew J. and Gepner, David and Tabuada, Gon\c calo},
     TITLE = {A universal characterization of higher algebraic {$K$}-theory},
   JOURNAL = {Geom. Topol.},
  FJOURNAL = {Geometry \& Topology},
    VOLUME = {17},
      YEAR = {2013},
    NUMBER = {2},
     PAGES = {733--838},
      ISSN = {1465-3060,1364-0380},
   MRCLASS = {19D10 (18D20 19D25 19D55 55N15 55U40)},
  MRNUMBER = {3070515},
MRREVIEWER = {Ross\ Staffeldt},
       DOI = {10.2140/gt.2013.17.733},
       URL = {https://doi.org/10.2140/gt.2013.17.733},
}

@misc{raksit-hochschildhomologyderivedrham,
      title={Hochschild homology and the derived de Rham complex revisited}, 
      author={Arpon Raksit},
      year={2026},
      eprint={2007.02576},
      archivePrefix={arXiv},
      primaryClass={math.AG},
      note={\url{https://arxiv.org/abs/2007.02576}}, 
}

@article {bms,
    AUTHOR = {Bhatt, Bhargav and Morrow, Matthew and Scholze, Peter},
     TITLE = {Topological {H}ochschild homology and integral {$p$}-adic
              {H}odge theory},
   JOURNAL = {Publ. Math. Inst. Hautes \'Etudes Sci.},
  FJOURNAL = {Publications Math\'ematiques. Institut de Hautes \'Etudes
              Scientifiques},
    VOLUME = {129},
      YEAR = {2019},
     PAGES = {199--310},
      ISSN = {0073-8301,1618-1913},
   MRCLASS = {14F30 (13A35)},
  MRNUMBER = {3949030},
MRREVIEWER = {Lance\ Edward\ Miller},
       DOI = {10.1007/s10240-019-00106-9},
       URL = {https://doi.org/10.1007/s10240-019-00106-9},
}

@phdthesis{marangoni:tel-02957674,
  TITLE = {{On Derived de Rham cohomology}},
  AUTHOR = {Marangoni, Davide},
  URL = {https://theses.hal.science/tel-02957674},
  NUMBER = {2020BORD0095},
  SCHOOL = {{Universit{\'e} de Bordeaux ; Universit{\`a} degli studi (Milan, Italie)}},
  YEAR = {2020},
  MONTH = Jul,
  TYPE = {Theses},
  HAL_ID = {tel-02957674},
  HAL_VERSION = {v1},
}

@book {Thesis_Lurie,
    AUTHOR = {Lurie, Jacob},
     TITLE = {Derived algebraic geometry},
      NOTE = {Thesis (Ph.D.)--Massachusetts Institute of Technology},
 PUBLISHER = {ProQuest LLC, Ann Arbor, MI},
      YEAR = {2004},
     PAGES = {(no paging)},
   MRCLASS = {99-05},
  MRNUMBER = {2717174},
       URL =
              {http://gateway.proquest.com/openurl?url_ver=Z39.88-2004&rft_val_fmt=info:ofi/fmt:kev:mtx:dissertation&res_dat=xri:pqdiss&rft_dat=xri:pqdiss:0806251},
}

@article {thh-scholze,
    AUTHOR = {Nikolaus, Thomas and Scholze, Peter},
     TITLE = {On topological cyclic homology},
   JOURNAL = {Acta Math.},
  FJOURNAL = {Acta Mathematica},
    VOLUME = {221},
      YEAR = {2018},
    NUMBER = {2},
     PAGES = {203--409},
      ISSN = {0001-5962,1871-2509},
   MRCLASS = {55U35 (16E40 18E30 19D99)},
  MRNUMBER = {3904731},
MRREVIEWER = {Geoffrey\ M. L. Powell},
       DOI = {10.4310/ACTA.2018.v221.n2.a1},
       URL = {https://doi.org/10.4310/ACTA.2018.v221.n2.a1},
}

@misc{hoyois-homotopy-fixed-points-circle,
      title={The homotopy fixed points of the circle action on Hochschild homology}, 
      author={Marc Hoyois},
      year={2018},
      eprint={1506.07123},
      archivePrefix={arXiv},
      primaryClass={math.KT},
      note={\url{https://arxiv.org/abs/1506.07123}}, 
}

@article {iwanari-hochschild-cohomology,
    AUTHOR = {Iwanari, Isamu},
     TITLE = {Differential calculus of {H}ochschild pairs for
              infinity-categories},
   JOURNAL = {SIGMA Symmetry Integrability Geom. Methods Appl.},
  FJOURNAL = {SIGMA. Symmetry, Integrability and Geometry. Methods and
              Applications},
    VOLUME = {16},
      YEAR = {2020},
     PAGES = {Paper No. 097, 57},
      ISSN = {1815-0659},
   MRCLASS = {16E40 (18M60 18N60)},
  MRNUMBER = {4156865},
MRREVIEWER = {Beno\^it\ Fresse},
       DOI = {10.3842/SIGMA.2020.097},
       URL = {https://doi.org/10.3842/SIGMA.2020.097},
}

@article {noohi,
    AUTHOR = {Noohi, Behrang},
     TITLE = {Mapping stacks of topological stacks},
   JOURNAL = {J. Reine Angew. Math.},
  FJOURNAL = {Journal f\"ur die Reine und Angewandte Mathematik. [Crelle's
              Journal]},
    VOLUME = {646},
      YEAR = {2010},
     PAGES = {117--133},
      ISSN = {0075-4102,1435-5345},
   MRCLASS = {57R19 (55P35 55P50)},
  MRNUMBER = {2719557},
MRREVIEWER = {Frank\ Neumann},
       DOI = {10.1515/CRELLE.2010.067},
       URL = {https://doi.org/10.1515/CRELLE.2010.067},
}

@book {platt,
    AUTHOR = {Platt, David},
     TITLE = {Chern character for global matrix factorizations},
      NOTE = {Thesis (Ph.D.)--University of Oregon},
 PUBLISHER = {ProQuest LLC, Ann Arbor, MI},
      YEAR = {2013},
     PAGES = {109},
      ISBN = {978-1303-28382-6},
   MRCLASS = {99-05},
  MRNUMBER = {3187371},
       URL =
              {http://gateway.proquest.com/openurl?url_ver=Z39.88-2004&rft_val_fmt=info:ofi/fmt:kev:mtx:dissertation&res_dat=xri:pqm&rft_dat=xri:pqdiss:3589551},
}

@misc{platt2012cherncharacterglobalmatrix,
      title={Chern Character for Global Matrix Factorizations}, 
      author={David Platt},
      year={2012},
      eprint={1209.5686},
      archivePrefix={arXiv},
      primaryClass={math.AG},
      url={https://arxiv.org/abs/1209.5686}, 
}

@article {yekutieli,
    AUTHOR = {Yekutieli, Amnon},
     TITLE = {The continuous {H}ochschild cochain complex of a scheme},
   JOURNAL = {Canad. J. Math.},
  FJOURNAL = {Canadian Journal of Mathematics. Journal Canadien de
              Math\'ematiques},
    VOLUME = {54},
      YEAR = {2002},
    NUMBER = {6},
     PAGES = {1319--1337},
      ISSN = {0008-414X,1496-4279},
   MRCLASS = {16E40 (14F10 18G10)},
  MRNUMBER = {1940241},
MRREVIEWER = {Peter\ J\o rgensen},
       DOI = {10.4153/CJM-2002-051-8},
       URL = {https://doi.org/10.4153/CJM-2002-051-8},
}

@article {caldararu-tu,
    AUTHOR = {C\u ald\u araru, Andrei and Tu, Junwu},
     TITLE = {Curved {$A_\infty$} algebras and {L}andau-{G}inzburg models},
   JOURNAL = {New York J. Math.},
  FJOURNAL = {New York Journal of Mathematics},
    VOLUME = {19},
      YEAR = {2013},
     PAGES = {305--342},
      ISSN = {1076-9803},
   MRCLASS = {18E30 (14F05)},
  MRNUMBER = {3084707},
MRREVIEWER = {Di-Ming\ Lu},
}

@article {PV-DUKE,
    AUTHOR = {Polishchuk, Alexander and Vaintrob, Arkady},
     TITLE = {Chern characters and {H}irzebruch-{R}iemann-{R}och formula for
              matrix factorizations},
   JOURNAL = {Duke Math. J.},
  FJOURNAL = {Duke Mathematical Journal},
    VOLUME = {161},
      YEAR = {2012},
    NUMBER = {10},
     PAGES = {1863--1926},
      ISSN = {0012-7094,1547-7398},
   MRCLASS = {14F05 (16E40 18G60)},
  MRNUMBER = {2954619},
MRREVIEWER = {David\ Favero},
       DOI = {10.1215/00127094-1645540},
       URL = {https://doi.org/10.1215/00127094-1645540},
}

@misc{chain-level-hkr-type-map-chern--kim,
      title={A chain-level HKR-type map and a Chern character formula}, 
      author={Kuerak Chung and Bumsig Kim and Taejung Kim},
      year={2021},
      eprint={2109.14372},
      archivePrefix={arXiv},
      primaryClass={math.AG},
      note={\url{https://arxiv.org/abs/2109.14372}}, 
}

@article {brown-walker-NG,
    AUTHOR = {Brown, Michael K. and Walker, Mark E.},
     TITLE = {A {C}hern-{W}eil formula for the {C}hern character of a
              perfect curved module},
   JOURNAL = {J. Noncommut. Geom.},
  FJOURNAL = {Journal of Noncommutative Geometry},
    VOLUME = {14},
      YEAR = {2020},
    NUMBER = {2},
     PAGES = {709--772},
      ISSN = {1661-6952,1661-6960},
   MRCLASS = {13D09 (14C35 16E40)},
  MRNUMBER = {4130844},
MRREVIEWER = {Jian\ Min\ Chen},
       DOI = {10.4171/jncg/378},
       URL = {https://doi.org/10.4171/jncg/378},
}

@article {ballard-favero-katzarkov-kernel,
    AUTHOR = {Ballard, Matthew and Favero, David and Katzarkov, Ludmil},
     TITLE = {A category of kernels for equivariant factorizations and its
              implications for {H}odge theory},
   JOURNAL = {Publ. Math. Inst. Hautes \'Etudes Sci.},
  FJOURNAL = {Publications Math\'ematiques. Institut de Hautes \'Etudes
              Scientifiques},
    VOLUME = {120},
      YEAR = {2014},
     PAGES = {1--111},
      ISSN = {0073-8301,1618-1913},
   MRCLASS = {14F05 (14C30 18E30 18G55)},
  MRNUMBER = {3270588},
MRREVIEWER = {Pawel\ Sosna},
       DOI = {10.1007/s10240-013-0059-9},
       URL = {https://doi.org/10.1007/s10240-013-0059-9},
}

@article {segal-closed-state,
    AUTHOR = {Segal, Ed},
     TITLE = {The closed state space of affine {L}andau-{G}inzburg
              {B}-models},
   JOURNAL = {J. Noncommut. Geom.},
  FJOURNAL = {Journal of Noncommutative Geometry},
    VOLUME = {7},
      YEAR = {2013},
    NUMBER = {3},
     PAGES = {857--883},
      ISSN = {1661-6952,1661-6960},
   MRCLASS = {14F43 (14F10)},
  MRNUMBER = {3108698},
MRREVIEWER = {Anatoly\ Libgober},
       DOI = {10.4171/JNCG/137},
       URL = {https://doi.org/10.4171/JNCG/137},
}

@article {vistoli-gw-th,
    AUTHOR = {Abramovich, Dan and Graber, Tom and Vistoli, Angelo},
     TITLE = {Gromov-{W}itten theory of {D}eligne-{M}umford stacks},
   JOURNAL = {Amer. J. Math.},
  FJOURNAL = {American Journal of Mathematics},
    VOLUME = {130},
      YEAR = {2008},
    NUMBER = {5},
     PAGES = {1337--1398},
      ISSN = {0002-9327,1080-6377},
   MRCLASS = {14N35 (14A20 53D45)},
  MRNUMBER = {2450211},
MRREVIEWER = {Johannes\ Walcher},
       DOI = {10.1353/ajm.0.0017},
       URL = {https://doi.org/10.1353/ajm.0.0017},
}

@article {hkr-original,
    AUTHOR = {Hochschild, G. and Kostant, Bertram and Rosenberg, Alex},
     TITLE = {Differential forms on regular affine algebras},
   JOURNAL = {Trans. Amer. Math. Soc.},
  FJOURNAL = {Transactions of the American Mathematical Society},
    VOLUME = {102},
      YEAR = {1962},
     PAGES = {383--408},
      ISSN = {0002-9947,1088-6850},
   MRCLASS = {18.20 (14.52)},
  MRNUMBER = {142598},
MRREVIEWER = {J.\ W.\ Gray},
       DOI = {10.2307/1993614},
       URL = {https://doi.org/10.2307/1993614},
}

@article {motivic-pippi,
    AUTHOR = {Pippi, Massimo},
     TITLE = {Motivic and {$\ell$}-adic realizations of the category of
              singularities of the zero locus of a global section of a
              vector bundle},
   JOURNAL = {Selecta Math. (N.S.)},
  FJOURNAL = {Selecta Mathematica. New Series},
    VOLUME = {28},
      YEAR = {2022},
    NUMBER = {2},
     PAGES = {Paper No. 33, 72},
      ISSN = {1022-1824,1420-9020},
   MRCLASS = {14F42 (14B05 14F08 19E08 32S30)},
  MRNUMBER = {4359568},
MRREVIEWER = {Satoshi\ Mochizuki},
       DOI = {10.1007/s00029-021-00734-2},
       URL = {https://doi.org/10.1007/s00029-021-00734-2},
}

@article {coherent-analogues,
    AUTHOR = {Efimov, Alexander I. and Positselski, Leonid},
     TITLE = {Coherent analogues of matrix factorizations and relative
              singularity categories},
   JOURNAL = {Algebra Number Theory},
  FJOURNAL = {Algebra \& Number Theory},
    VOLUME = {9},
      YEAR = {2015},
    NUMBER = {5},
     PAGES = {1159--1292},
      ISSN = {1937-0652,1944-7833},
   MRCLASS = {14F05 (13D09)},
  MRNUMBER = {3366002},
MRREVIEWER = {Scott\ R.\ Nollet},
       DOI = {10.2140/ant.2015.9.1159},
       URL = {https://doi.org/10.2140/ant.2015.9.1159},
}

@article{ArinkinCaldararuHablicsek2019,
  author  = {Arinkin, Dima and C\u{a}ld\u{a}raru, Andrei and Hablicsek, M\'{a}rton},
  title   = {Formality of derived intersections and the orbifold {HKR} isomorphism},
  journal = {Journal of Algebra},
  volume  = {540},
  pages   = {151--186},
  year    = {2019}
}

@article{FanJarvisRuan2013,
  author   = {Fan, Huijun and Jarvis, Tyler J. and Ruan, Yongbin},
  title    = {The {W}itten equation, mirror symmetry, and quantum singularity theory},
  journal  = {Annals of Mathematics},
  volume   = {178},
  number   = {1},
  pages    = {1--106},
  year     = {2013},
  mrnumber = {3043578}
}

@article{PolishchukVaintrob2016,
  author   = {Polishchuk, Alexander and Vaintrob, Arkady},
  title    = {Matrix factorizations and cohomological field theories},
  journal  = {Journal f{\"u}r die reine und angewandte Mathematik (Crelles Journal)},
  volume   = {714},
  pages    = {1--122},
  year     = {2016},
  mrnumber = {3491884}
}

@article{FaveroKim2020,
  author   = {Favero, David and Kim, Bumsig},
  title    = {General {GLSM} Invariants and Their Cohomological Field Theories},
  journal  = {arXiv preprint arXiv:2006.12182},
  year     = {2020}
}

@book{GLSMFundamental2022,
  author    = {Ciocan-Fontanine, Ionu\textcommabelow{t} and Favero, David and Gu\'{e}r\'{e}, J\'{e}r\'{e}my and Kim, Bumsig and Shoemaker, Mark},
  title     = {Fundamental Factorization of a {GLSM}, Part {I}: Construction},
  series    = {Memoirs of the American Mathematical Society},
  volume    = {280},
  number    = {1381},
  year      = {2022},
  publisher = {American Mathematical Society}
}

@misc{Halpern_Leistner_2020---arxiv--v2,
   title={Equivariant Hodge theory and noncommutative geometry},
   howpublished = {\url{https://arxiv.org/abs/1507.01924v2}},
   author={Halpern-Leistner, Daniel and Pomerleano, Daniel},
   year={2020}
}

\end{document}